\newcommand{\R}{{\mathbb R}}
\newtheorem{theorem}{Theorem}[section]
\newtheorem{lemma}[theorem]{Lemma}
\newtheorem{proposition}[theorem]{Proposition}
\newtheorem{corollary}[theorem]{Corollary}
\theoremstyle{definition}
\newtheorem{definition}[theorem]{Definition}
\newtheorem*{merci}{Acknowledgements}
\theoremstyle{remark}
\newtheorem{remark}{Remark}[section]
\def\R{{\mathbb R}}
\numberwithin{equation}{section}
\begin{document}
\title[Dispersive perturbations of Burgers equation]{Dispersive perturbations of Burgers and hyperbolic equations I : local theory }
\author[F. Linares]{Felipe Linares}
\address{ IMPA\\ Estrada Dona Castorina 110\\ Rio de Janeiro 22460-320, RJ Brasil}
\email{ linares@impa.br}

\author[D. Pilod]{Didier Pilod}
\address{Instituto de Matem\' atica, Universidade Federal do Rio de Janeiro, Caixa Postal 68530 CEP 21941-97, Rio de Janeiro, RJ Brasil}
\email{didier@im.ufrj.br}
\author[J.-C. Saut]{Jean-Claude Saut}
\address{Laboratoire de Math\' ematiques, UMR 8628,\\
Universit\' e Paris-Sud et CNRS,\\ 91405 Orsay, France}
\email{jean-claude.saut@math.u-psud.fr}

\maketitle
\begin{abstract}
The aim of this paper is to show how a weakly dispersive perturbation of the inviscid Burgers equation improve (enlarge) the space of resolution of the local Cauchy problem.
More generally we will review several problems arising from weak dispersive perturbations of nonlinear hyperbolic equations or systems.
\end{abstract}

\section{Introduction}
This paper is the first of a series on the Cauchy problem for dispersive perturbations of nonlinear hyperbolic equations or systems.
Our motivation is to study the influence of dispersion on the space of resolution, on the lifespan and on the dynamics of solutions to the Cauchy problem for \lq\lq weak\rq\rq \ dispersive perturbations of hyperbolic quasilinear equations or systems, as for instance the  Boussinesq  systems for surface water waves.

 In the present paper we will focus on   the model equation (which was introduced by  Whitham  \cite{W} for a special  choice of  the kernel $k$, see below):
\begin{equation}\label{Whit}
u_t+uu_x+\int_{-\infty}^{\infty}k(x-y)u_x(y,t)dy=0.
\end{equation}
This equation can also be written on the form
\begin{equation}\label{Whibis}
u_t+uu_x-Lu_x=0,
\end{equation}
where the Fourier multiplier operator $L$ is defined by 
$$\widehat{Lf}(\xi)=p(\xi)\hat{f}(\xi),$$
where $p=\hat{k}.$

Precise assumptions on $k$ (resp. $p$) will be made later on.  In the original Whitham equation, the kernel $k$ was given by 
\begin{equation}\label{tanh}
k(x)=\frac{1}{2\pi}\int_\R \left( \frac{\tanh \xi}{\xi} \right)^{1/2} e^{ix\xi} d\xi,
\end{equation}
that is $p(\xi)=\left( \frac{\tanh \xi}{\xi} \right)^{1/2} .$

The dispersion is in this case that of the finite depth surface water waves without surface tension.

The general idea is to investigate the \lq\lq fight\rq\rq \,  between nonlinearity and dispersion. Usually people attack this problem by fixing the dispersion ({\it eg} that of the KdV equation) and varying the nonlinearity (say $u^p u_x$ in the context of generalized KdV).

Our viewpoint, which is probably more physically relevant, is to fix the quadratic, nonlinearity ({\it eg} $uu_x$) and to vary (lower) the dispersion. In fact in many problems arising from Physics or Continuum Mechanics the nonlinearity is quadratic, with terms like $(u\cdot \nabla) u$ and the dispersion is in some sense weak. In particular the dispersion is not strong enough for yielding the dispersive estimates that allows to solve the Cauchy problem in relatively large functional classes (like the KdV or Benjamin-Ono equation in particular), down to the energy level for instance.\footnote{And thus obtaining {\it global well-posedness} from the conservation laws.}

Two basic issues  can be addressed, a third one will be presented in the final section.

 \vspace{0.5cm}
 
 {\bf 1.} Which amount of dispersion prevents the hyperbolic ({\it ie} by shock formation) blow-up of the underlying hyperbolic quasilinear equation or system. This question has been apparently raised for the first time by Whitham (see \cite{W}) for the Whitham equation \eqref {Whit}.  A physicist's view of that problem is displayed in \cite{KZ} where it is claimed that the collapse of gradients (wave breaking) is prevented when 
 $p(\xi)=|\xi|^{\alpha},$ $\alpha >0$ \lq\lq by comparison of linear and nonlinear terms\rq\rq.
 
 A typical result (see \cite{NS},  \cite{CE}) suggest that  for {\it not too  dispersive Whitham type equations} that is for instance when $p(\xi)=|\xi|^{\alpha}, \; -1<\alpha\leq 0,$ \eqref{Whit} presents a blow-up phenomenum. This has been proved for  Whitham type equations, with a regular kernel $k$ satisfying
 \begin{equation}\label{k}
 k\in C(\R)\cap L^1(\R),\;\text{symmetric and monotonically decreasing on}\; \R_+,
 \end{equation}
by Naumkin and Shishmarev \cite {NS} and by Constantin and Escher \cite{CE}, without an unnecessary hypothesis made in \cite{NS}. The blow-up is obtained for initial data which are sufficiently asymmetric. More precisely :
 
 \begin{theorem}\label{CE} \cite{CE}
 Let $u_0\in H^{\infty}(\R)$ be such that
 $$\inf_{x\in\R}|u'_0(x)|+\sup_{x\in \R}|u'_0(x)|\leq -2k(0).$$
 Then the corresponding solution of \eqref{Whit}  undergoes a wave breaking phenomena, that is there exists $T=T(u_0)>0$ with 
$$\sup_{(x,t)\in [0,T)\times \R}|u(x,t)|<\infty,\text{while}\; \sup_{x\in \R}|u_x(t,x)|\to \infty\;\text{as}\;t\to T.$$
 \end{theorem}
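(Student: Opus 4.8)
The plan is to reduce the statement to a scalar Riccati differential inequality for the extremal slopes of $u$. Writing the equation as $u_t+uu_x=-k*u_x$ and differentiating in $x$, the quantity $v:=u_x$ solves $v_t+uv_x+v^2=-\partial_x(k*u_x)=-(k'*v)$, where the convolution against $k'$ is read through the signed Stieltjes measure $dk$, since \eqref{k} only guarantees that $k$ is continuous and of bounded variation. Because $u_0\in H^\infty(\R)$ the solution stays smooth and decaying, so $m(t):=\inf_x v(x,t)$ and $M(t):=\sup_x v(x,t)$ are attained and Lipschitz in $t$; at a minimizer $\xi(t)$ one has $v_x=0$, whence for a.e. $t$ one gets $\dot m=-m^2-(k'*v)(\xi)$, and likewise $\dot M=-M^2-(k'*v)(\eta)$ at a maximizer $\eta(t)$.

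The core of the argument is to control the nonlocal forcing $(k'*v)$ at these extremal points. Since $k$ is even and nonincreasing on $\R_+$, the measure $dk$ is nonpositive on $(0,\infty)$ and, by continuity and integrability, $\int_{(0,\infty)}\lvert dk\rvert=k(0)$. Symmetrizing the convolution gives $(k'*v)(x)=\int_{(0,\infty)}\bigl(v(x+s)-v(x-s)\bigr)\,\lvert dk\rvert(s)$, and since $m\le v\le M$ the integrand is bounded in modulus by $M-m$. Hence $\lvert(k'*v)(\xi)\rvert,\lvert(k'*v)(\eta)\rvert\le k(0)(M-m)$, which yields $\dot m\le -m^2+k(0)(M-m)$ and $\dot M\le -M^2+k(0)(M-m)$. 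Adding these and completing the square, $S:=m+M$ obeys $\dot S\le -(m^2+M^2)+2k(0)(M-m)\le -\tfrac12 S^2+2k(0)^2$, the last step using $(m+k(0))^2+(M-k(0))^2\ge\tfrac12 S^2$.

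The hypothesis $S(0)=\inf u_0'+\sup u_0'\le -2k(0)$ places $S(0)$ at or below the unstable equilibrium $-2k(0)$ of the comparison equation $\dot S=-\tfrac12(S-2k(0))(S+2k(0))$, so a Riccati comparison forces $S(t)\to-\infty$ at some finite $T$. Since $m\le\tfrac12 S$, this gives $\inf_x u_x(\cdot,t)\to-\infty$ as $t\uparrow T$, the claimed gradient blow-up. It remains to see that $u$ itself stays bounded: writing the equation in conservation form $u_t+\partial_x\bigl(\tfrac{u^2}{2}+k*u\bigr)=0$ and noting that along characteristics $\dot u=-(k'*u)$ with $\lvert(k'*u)\rvert\le 2k(0)\|u\|_{L^\infty}$, a Gronwall estimate yields $\|u(t)\|_{L^\infty}\le\|u_0\|_{L^\infty}e^{2k(0)t}$, which is finite on $[0,T)$; thus the solution remains bounded while its slope diverges.

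The main obstacle is twofold. First, the rigorous differentiation of $m(t)$ and $M(t)$ needs an envelope/Rademacher argument justifying $\dot m=v_t(\xi(t),t)$ for a.e. $t$, and this is exactly where the smoothness and decay of the $H^\infty$ solution are essential. Second, the borderline case of equality $S(0)=-2k(0)$ is delicate, because the crude bounds above are saturated only when $v$ is constant with $M-m=2k(0)$; for genuinely nonconstant data the forcing estimate is strict, pushing $S$ strictly below $-2k(0)$ and restoring finite-time blow-up. Making this strictness quantitative, so as to capture the sharp threshold appearing in \eqref{k}, is the technical heart of the proof.
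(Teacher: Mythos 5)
The paper itself does not prove this statement --- it is quoted from Constantin and Escher \cite{CE} --- so your attempt can only be measured against that argument. Two preliminary points: the hypothesis as printed, with $|u_0'|$, is vacuous (the left side is nonnegative, the right side nonpositive), and you have correctly read it as $\inf u_0'+\sup u_0'\le -2k(0)$, which is the actual condition in \cite{CE}. Your setup is sound and is essentially theirs: the a.e.\ differentiation of $m(t)=\inf_x u_x$ and $M(t)=\sup_x u_x$ along extremal points, the reading of $k'$ as the signed measure $dk$ with $\int_{(0,\infty)}|dk|=k(0)$, the symmetrized bound $|(dk*u_x)(x)|\le k(0)\,(M-m)$, and the boundedness of $u$ via $k*u_x=dk*u$ are all correct.

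The genuine gap is in the endgame. Your Riccati inequality $\dot S\le -\tfrac12(S-2k(0))(S+2k(0))$ for $S=m+M$ has $-2k(0)$ as an equilibrium, so from $S(0)=-2k(0)$ the comparison principle yields only $S(t)\le -2k(0)$, not $S\to-\infty$; the assertion that placing $S(0)$ \emph{at or below} the unstable equilibrium forces blow-up of $S$ is false precisely at the threshold the theorem covers, as you yourself note at the end. The remedy you sketch --- a quantitative strictness of $|(dk*v)|\le k(0)(M-m)$ for nonconstant $v$ --- is not a workable repair: one would need a gap uniform in time, and nothing prevents $v(\cdot,t)$ from approaching the saturating configuration. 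The correct completion, and the one in \cite{CE}, uses your $S$-inequality only to show that the set $\{m+M\le -2k(0)\}$ is forward invariant (since $\dot S\le 0$ on its boundary), and then closes a Riccati inequality for $m$ \emph{alone}: on this invariant set $M-m=S-2m\le -2k(0)-2m$, hence $\dot m\le -m^2-2k(0)m-2k(0)^2=-(m+k(0))^2-k(0)^2$, while $m(0)\le -2k(0)-M(0)\le -2k(0)$ because $M(0)\ge 0$ for decaying data; therefore $m(0)+k(0)\le -k(0)<0$ and $m\to-\infty$ by time $1/k(0)$. With this substitution for your final step, and the standard continuation criterion guaranteeing that the smooth solution persists until $\|u_x\|_{L^\infty}$ diverges, the argument is complete.
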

 
\vspace{0.5cm}
 The previous result does not include the case of the Whitham equation \eqref{Whit} with kernel given by \eqref{tanh} since then $k(0)=\infty,$ but it is claimed in \cite{CE} that the method of proof adapts to more general kernels. 
 
 This has been proven recently by Castro, Cordoba and Gancedo \cite{CCG} for 
the equation
\begin{equation}\label {CCG}
u_t+uu_x+D^\beta \mathcal Hu=0,
\end{equation}
where $\mathcal H$ is the Hilbert transform and $D^{\beta}$ is the Riesz potential of order $-\beta$, \textit{i.e.}    $D^{\beta}$ is defined via Fourier transform by
\begin{equation} \label{Riesz}
\widehat{D^\beta f}(\xi)= |\xi|^\beta \hat{f}(\xi),
\end{equation} 
for any $\beta \in \mathbb R$. It is established  in \cite {CCG} (see also \cite{VMH} for the case $\beta=\frac{1}{2}$) that for $0\leq\beta<1,$ there exist initial data $u_0\in L^2(\R)\cap C^{1+\delta}(\R),\; 0<\delta<1,$ and $T(u_0)$ such that the corresponding solution $u$ of \eqref{CCG} satisfies 

$$\lim_{t\to T} \|u(\cdot,t)\|_{C^{1+\delta}(\R)}=+\infty.$$

 \vspace{0.3cm}
This rules out the case $-1<\alpha <0$ in our notation.  As observed in \cite{LS}, the proof in \cite{CCG} extends easily to non pure power dispersions, such as \eqref{tanh} and thus to the Whitham equation \eqref{Whit}. Note however that it is not clear whether or not the blow-up displayed in the aforementioned papers is shocklike. The solution is proven (by contradiction) to blow-up in a $C^{1+\delta}$ norm and the sup norm of the solution and of its derivative might blow-up at the same time.


 The case $0<\alpha<1$ is much more delicate (see the discussion in the final Section).

 \begin{remark}
 Similar issues have been addressed in \cite{KNS} for the Burgers equation with fractionary dissipation.
 \end{remark}
{\bf 2.} Investigate the influence of the dispersive term on the theory of the local well-posedness of the Cauchy problem associated to the general \lq\lq dispersive nonlinear hyperbolic system\rq\rq \ \eqref{basic}. Recall that, for the underlying hyperbolic system (that is when $\mathcal L =0$ in \eqref{basic} below) assumed to be symmetrizable, the Cauchy problem is locally well-posed for data in the Sobolev space $H^s(\R^n)$ for any $s>\frac{n}{2}+1.$

The question is then to look to which extent the presence of $\mathcal L$ can lower the value of $s$. This issue is  well understood for {\it scalar} equations with a relatively high dispersion, as the Korteweg-de Vries, Benjamin-Ono, etc,... equations,  much less for equations or systems with a {\it weak} dispersive part. 

Again, we will focus in the present paper on the scalar equation \eqref{Whit} on its form \eqref{Whibis} that is
\begin{equation} \label{dispBurgers}
\partial_tu- D^{\alpha}\partial_xu+ u\partial_xu=0,
\end{equation}
where $x, t \in \mathbb R$ and $D^{\alpha}$ is the Riesz potential of order $-\alpha$ defined in \eqref{Riesz}.
When $\alpha=1$, respectively $\alpha=2$, equation \eqref{dispBurgers} corresponds to the well-known Benjamin-Ono and respectively Korteweg-de Vries equations. This equation has been extensively studied for $1 \leq \alpha \leq 2$ (see \cite{FLP1}  and the references therein). In the following we will consider the less dispersive case $0<\alpha<1$. The case $\alpha =\frac{1}{2}$ is somewhat reminiscent of the linear dispersion of finite depth water waves with surface tension that have phase velocity (in dimension one and two, where $\hat{{\bf k}}$ is a unit vector) which writes in dimension one or two

\begin{equation}\label{WW}
{\bf c}({\bf k})=\frac{\omega({\bf k})}{|{\bf k}|}\hat{{\bf k}}=
g^{\frac{1}{2}} \left(\frac{\tanh(|{\bf k}|h_0)}{|{\bf
k}|}\right)^{\frac{1}{2}} \left(1+\frac{T}{\rho g}|{\bf
k}|^2\right)^{\frac{1}{2}}\hat{{\bf k}},
\end{equation}

\vspace{0.5cm}
In the case $\alpha=0$, equation \eqref{dispBurgers} becomes  the original Burgers equation 
\begin{equation} \label{Burgers}
\partial_t\widetilde{u}= \widetilde{u}\partial_x\widetilde{u},
\end{equation}
by performing the natural change of variable $\widetilde{u}(x,t)=u(x- t,t)$,  while the case $\alpha=-1$ corresponds to the Burgers-Hilbert equation 
\begin{equation} \label{Burgers-Hilbert} 
\partial_tu+\mathcal{H}u= u\partial_xu,
\end{equation}
where $\mathcal{H}$ denotes the Hilbert transform. Equation \eqref{Burgers-Hilbert} has been studied in \cite{CCG,HI}.

The following quantities are conserved by the flow associated to \eqref{dispBurgers},
\begin{equation} \label{M}
M(u)=\int_{\mathbb R}u^2(x,t)dx,
\end{equation}
and 
\begin{equation} \label{H}
H(u)=\int_{\mathbb R}\big( \frac{1}{2} |D^{\frac{\alpha}2}u(x,t)|^2-\frac{1}{6}u^3(x,t)\big) dx.
\end{equation}

Note that by the Sobolev embedding $H^{\frac{1}{6}}(\R)\hookrightarrow L^3(\R)$,  $H(u)$ is well-defined when $\alpha \geq \frac{1}{3}.$
Moreover, equation \eqref{dispBurgers} is invariant under the scaling transformation 
\begin{displaymath} 
u_{\lambda} (x,t)=\lambda^{\alpha}u(\lambda x,\lambda^{\alpha+1}t),
\end{displaymath}
for any positive number $\lambda$. A straightforward computation shows that $\|u_{\lambda}\|_{\dot{H}^s}=\lambda^{s+\alpha-\frac{1}{2}}\|u_{\lambda}\|_{\dot{H}^s}$, and thus the critical index corresponding to \eqref{dispBurgers} is $s_{\alpha}=\frac{1}{2}-\alpha$. In particular, equation \eqref{dispBurgers} is $L^2$-critical for $\alpha=\frac{1}{2}$. 

By using standard compactness methods, one can prove that the Cauchy problem associated to \eqref{dispBurgers} is locally well-posed in $H^s(\mathbb R)$ for $s>\frac{3}{2}$. 

Moreover, interpolation arguments (see \cite{S}) or the following Gagliardo-Nirenberg inequality, (see for example the appendix in \cite{CCG}),
\begin{displaymath} 
\|u\|_{L^3} \lesssim \|u\|_{L^2}^{\frac{3\alpha -1}{3\alpha}}\|D^{\frac{\alpha}{2}}u\|_{L^2}^{\frac{1}{3\alpha}}, \quad \alpha \geq \frac{1}{3},
\end{displaymath} 
combined with the conserved quantities $M$ and $H$ defined in \eqref{M} and \eqref{H} implies the existence of global weak solution in the energy space $H^{\frac{\alpha}{2}}(\mathbb R)$ as soon as $\alpha>\frac{1}{2}$ and for small data in $H^{\frac{1}{4}}(\mathbb R)$ when $\alpha=\frac{1}{2}$ (see \cite {S}). More precisely\footnote{We recall that we excludes the value $\alpha=1$ which corresponds to the Benjamin-Ono equation for which much more complete results are known.}:

 \begin{theorem} \label{triv}
Let $\frac {1}{2}<\alpha<1$ and $u_0\in H^{\frac{\alpha}{2}}(\R)$. Then \eqref{dispBurgers} possesses a global weak solution in $L^{\infty}([0,T];H^{\frac{\alpha}{2}}(\R))$ with initial data $u_0.$ The same result holds when $\alpha=\frac {1}{2}$ provided $\|u_0\|_{L^2}$ is small enough.
\end{theorem}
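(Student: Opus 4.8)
The plan is to obtain the solution by a regularization argument combined with the a priori bounds furnished by the conserved quantities $M$ and $H$, in the spirit of \cite{S}; the conceptual heart is an energy estimate in which the cubic term is absorbed by the dispersive one, and it is precisely this absorption that forces the threshold $\alpha=\frac12$.

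I would begin with the formal a priori estimate. Set $y(t):=\|D^{\frac\alpha2}u(t)\|_{L^2}^2$. Rewriting the definition \eqref{H} of $H$ gives
\[
y(t)=2H(u(t))+\tfrac13\int_\R u^3(x,t)\,dx .
\]
Using the conservation of $M$ and $H$, so that $\|u(t)\|_{L^2}=\|u_0\|_{L^2}$ and $H(u(t))=H(u_0)$, together with the Gagliardo--Nirenberg inequality stated above, one estimates
\[
\Big|\int_\R u^3\,dx\Big|\le \|u\|_{L^3}^3\lesssim \|u_0\|_{L^2}^{\frac{3\alpha-1}{\alpha}}\, y(t)^{\frac{1}{2\alpha}},
\]
whence
\[
y(t)\le 2H(u_0)+C\,\|u_0\|_{L^2}^{\frac{3\alpha-1}{\alpha}}\,y(t)^{\frac{1}{2\alpha}} .
\]
This is where the dichotomy appears: for $\frac12<\alpha<1$ the exponent $\frac1{2\alpha}$ is strictly less than $1$, so Young's inequality absorbs the last term into the left-hand side and yields a bound on $y(t)$ depending only on $H(u_0)$ and $\|u_0\|_{L^2}$, uniformly in $t$; for $\alpha=\frac12$ the exponent equals $1$ and $\|u_0\|_{L^2}^{\frac{3\alpha-1}{\alpha}}=\|u_0\|_{L^2}$, so the absorption is possible only under the smallness assumption $C\|u_0\|_{L^2}<1$. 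Combined with $\|u(t)\|_{L^2}=\|u_0\|_{L^2}$ this controls $\|u(t)\|_{H^{\frac\alpha2}}$ uniformly in time, hence on any interval $[0,T]$, which is what makes the solution global.

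To turn this into a construction, I would regularize the equation, for instance by a parabolic term or by mollifying the data into $H^\infty(\R)$ and invoking the local $H^s$ theory for $s>\frac32$, so as to produce a family $(u_n)$ of smooth solutions on $[0,T]$ for which $M$ and $H$ are conserved, or conserved up to controllable error terms (a dissipative regularization only decreases $M$). The estimate above then holds uniformly in $n$, giving a bound for $(u_n)$ in $L^\infty([0,T];H^{\frac\alpha2}(\R))$ independent of $n$.

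Finally I would pass to the limit. The uniform bound yields, along a subsequence, a weak-$*$ limit $u\in L^\infty([0,T];H^{\frac\alpha2}(\R))$. The delicate point is the quadratic nonlinearity $\frac12\partial_x(u_n^2)$, for which weak convergence does not suffice: I would read off from the equation a uniform bound for $\partial_t u_n$ in $L^\infty([0,T];H^{-\sigma}(\R))$ for some $\sigma>0$ (the dispersive term lies in $H^{\frac\alpha2-\alpha-1}$, while $u_n^2$ is bounded in $L^{3/2}$ by the embedding $H^{\frac\alpha2}\hookrightarrow L^3$), and then apply an Aubin--Lions--Simon compactness argument based on the compact embedding $H^{\frac\alpha2}\hookrightarrow L^2$ on bounded sets. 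This gives strong convergence of $(u_n)$ in $L^2_{\mathrm{loc}}$, enough to identify the limit of $u_n^2$ with $u^2$ and to conclude that $u$ solves \eqref{dispBurgers} in the distributional sense. I expect this compactness step, together with checking that the regularization does not spoil the uniform a priori bound, to be the main technical obstacle, whereas the energy absorption is the conceptual core and is exactly what separates the range $\frac12<\alpha<1$ from the critical case $\alpha=\frac12$.
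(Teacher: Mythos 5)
Your proposal is correct and follows essentially the same route as the paper, which obtains Theorem \ref{triv} exactly by combining the conservation of $M$ and $H$ with the stated Gagliardo--Nirenberg inequality to get a uniform $H^{\frac{\alpha}{2}}$ bound (with the Young-absorption dichotomy at $\alpha=\frac12$ forcing the smallness of $\|u_0\|_{L^2}$), followed by a standard regularization--compactness passage to the limit as in \cite{S}. The paper only sketches this argument by reference; your write-up supplies the same details correctly.
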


Moreover, it was established in \cite{GV} that a Kato type local smoothing property holds, implying global existence of weak $L^2$ solutions :

\begin{theorem}\label {GV}
Let $\frac {1}{2}<\alpha<1$ and $u_0\in L^2(\R)$. Then \eqref{dispBurgers} possesses a global weak solution in  $L^{\infty}([0,\infty);L^2(\R))\cap l^{\infty}L^2_{\text{loc}}(\R;H^{\frac{\alpha}2}_{\text{loc}}(\R))$ with initial data $u_0.$ 

\end{theorem}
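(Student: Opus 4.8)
The plan is to construct the global weak solution as a limit of smooth solutions of a regularized equation, the compactness needed to pass to the limit in the nonlinearity being supplied precisely by the Kato smoothing estimate asserted in the statement. First I would introduce the parabolic regularization
\[
\partial_t u_\eps - D^{\alpha}\partial_x u_\eps + u_\eps\partial_x u_\eps = \eps\,\partial_x^2 u_\eps,
\]
with mollified data $u_{0,\eps}$. For fixed $\eps>0$ this is a semilinear parabolic equation admitting a global smooth solution $u_\eps$ once an $L^2$ a priori bound is available. Testing against $u_\eps$, the dispersive term $D^{\alpha}\partial_x u_\eps$ and the nonlinear term $u_\eps\partial_x u_\eps$ both integrate to zero by antisymmetry (this is the conservation of $M$), leaving $\frac12\frac{d}{dt}\int u_\eps^2 + \eps\int(\partial_x u_\eps)^2 = 0$, whence $\|u_\eps(t)\|_{L^2}\le \|u_0\|_{L^2}$ uniformly in $\eps$ and $t$.

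The heart of the argument is a weighted energy identity producing the local smoothing. Let $\psi$ be a bounded nondecreasing function with $\psi'\ge 0$ a fixed nonnegative bump, and compute $\frac{d}{dt}\int\psi u_\eps^2$. Writing $L=D^{\alpha}\partial_x$, which is antisymmetric, the dispersive contribution is $\langle\psi u_\eps,Lu_\eps\rangle = -\frac12\langle u_\eps,[L,\psi]u_\eps\rangle$, and the commutator $[L,\psi]$ has leading symbol $(\alpha+1)\psi'(x)|\xi|^{\alpha}$, so that
\[
\langle\psi u_\eps,Lu_\eps\rangle = -\frac{\alpha+1}{2}\int\psi'\,|D^{\alpha/2}u_\eps|^2\,dx + (\text{lower order}).
\]
After handling the nonlinearity through $\int\psi u_\eps^2\partial_x u_\eps = -\frac13\int\psi' u_\eps^3$ and noting that the $\eps$-terms are harmless, integration in time gives
\[
\int_0^T\!\!\int\psi'\,|D^{\alpha/2}u_\eps|^2\,dx\,dt \lesssim \|u_0\|_{L^2}^2 + \Big|\int_0^T\!\!\int\psi'\,u_\eps^3\,dx\,dt\Big|.
\]
Translating $\psi$ along $\mathbb R$ and taking the supremum over translates then yields the uniformly local bound encoded in the space $l^\infty L^2_{\text{loc}}(\mathbb R;H^{\alpha/2}_{\text{loc}}(\mathbb R))$.

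The main obstacle is closing this estimate, i.e.\ absorbing the cubic term. Here I would apply the Gagliardo--Nirenberg inequality quoted above, $\|u\|_{L^3}\lesssim\|u\|_{L^2}^{(3\alpha-1)/(3\alpha)}\|D^{\alpha/2}u\|_{L^2}^{1/(3\alpha)}$, localized to the support of $\psi'$, to obtain
\[
\Big|\int\psi'\,u_\eps^3\,dx\Big| \lesssim \|u_\eps\|_{L^2}^{3-1/\alpha}\Big(\int\psi'\,|D^{\alpha/2}u_\eps|^2\,dx\Big)^{1/(2\alpha)}.
\]
Since $\alpha>\frac12$ the exponent $\frac{1}{2\alpha}<1$, so the smoothing quantity enters to a subunit power and can be absorbed by Young's inequality together with the uniform $L^2$ bound; this is exactly where $\alpha>\frac12$ is needed, and why $\alpha=\frac12$ forces a smallness assumption. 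This produces a bound on $\int_0^T\int\psi'|D^{\alpha/2}u_\eps|^2$ uniform in $\eps$.

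Finally I would pass to the limit. The uniform estimates bound $u_\eps$ in $L^\infty([0,T];L^2)$ and $D^{\alpha/2}u_\eps$ in $L^2_{\text{loc}}$ in space-time, while the equation bounds $\partial_t u_\eps$ in a space of negative regularity. An Aubin--Lions--Simon compactness argument then extracts a subsequence converging strongly in $L^2_{\text{loc}}$ in space-time, which is precisely what is required to pass to the limit in the nonlinearity written in divergence form, $u_\eps\partial_x u_\eps=\frac12\partial_x(u_\eps^2)$. Weak-$*$ lower semicontinuity transfers the bounds to the limit $u$, giving the desired global weak solution. I expect the two delicate points to be the rigorous justification of the commutator expansion (via paradifferential calculus or a direct Fourier-symbol computation, together with a localization estimate relating $\int\psi'|D^{\alpha/2}u|^2$ to $\|D^{\alpha/2}(\sqrt{\psi'}u)\|_{L^2}^2$) and the strong local compactness needed to control the quadratic nonlinearity.
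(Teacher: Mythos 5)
The paper does not prove this theorem: it is quoted from Ginibre and Velo \cite{GV,GV2}, and your proposal is essentially a reconstruction of that argument (parabolic regularization, Kato-type weighted identity, absorption of the cubic term by Gagliardo--Nirenberg, Aubin--Lions compactness). The scheme is correct and consistent with the machinery the paper does develop: your ``leading symbol'' computation for the commutator is exactly Lemma \ref{localsmoothinglemma} (itself quoted from Proposition 1 of \cite{GV}), except that the exact identity produces the sum $|D^{\alpha/2}u|^2+|D^{\alpha/2}\mathcal{H}u|^2$ rather than $|D^{\alpha/2}u|^2$ alone (harmless, both terms being nonnegative) together with an $L^2$-bounded remainder $R_\alpha(\psi)$; and your identification of $\tfrac{1}{2\alpha}<1$ as the precise point where $\alpha>\tfrac12$ enters, with $\alpha=\tfrac12$ forcing smallness, matches the paper's discussion around Theorem \ref{triv}. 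Note that the paper's own Proposition \ref{localsmoothing} closes the nonlinear term using $\|\partial_xu\|_{L^1_TL^{\infty}_x}$, i.e.\ it is an estimate for smooth solutions, whereas here you must close at the $L^2$ level exactly as you propose. Two points need more care than your sketch gives them: the localized Gagliardo--Nirenberg step puts the smoothing quantity of a slightly larger window on the right-hand side, so the Young absorption must be run on $\sup_j\int_0^T\int\chi_j'|D^{\alpha/2}u_\eps|^2\,dx\,dt$, which must be known finite a priori before absorbing --- this is supplied by the global smoothness of the regularized solutions for fixed $\eps$, the bound becoming uniform in $\eps$ only a posteriori; and the favorable sign of the leading commutator term depends on the direction of monotonicity of $\psi$ relative to the sign of the dispersion, which is immaterial for the two-sided $l^{\infty}_j$ statement but must be fixed consistently before integrating in time.
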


 However, the case $0<\alpha <\frac{1}{2}$ is more delicate and the previous results are not known to hold. In particular  the Hamiltonian $H$ together with the $L^2$ norm do not control the $H^{\frac{\alpha}{2}}(\mathbb R)$ norm anymore. Note that the Hamiltonian does not make sense when $0<\alpha<\frac{1}{3}$.

\vspace{0.5cm}

The main result of this paper  establishes that the space of resolution of the local Cauchy problem enlarges with $\alpha$. More precisely we will prove

\begin{theorem} \label{maintheo}
Let $0<\alpha<1$. Define
$s(\alpha)=\frac32-\frac{3\alpha}8$ and assume that $s >
s(\alpha)$. Then, for every $u_0 \in H^s(\mathbb R)$, there exists a
positive time $T =T(\|u_0\|_{H^s})$ (which can be chosen as a nonincreasing function of its argument), and a unique solution $u$ to
\eqref{dispBurgers} satisfying $u(\cdot,0)=u_0$ such that
\begin{equation} \label{maintheo1}
u \in  C([0,T]:H^s(\mathbb R)) \quad \text{and} \quad \partial_xu \in L^1([0,T]:L^{\infty}(\mathbb R))  .
\end{equation}
Moreover, for any $0<T'<T$, there exists a neighborhood
$\mathcal{U}$ of $u_0$ in $H^s(\mathbb R)$ such that the flow map
data-solution
\begin{equation} \label{maintheo2}
S^s_{T'}: \mathcal{U} \longrightarrow C([0,T'];H^s(\mathbb R)) , \
u_0 \longmapsto u,
\end{equation}
is continuous.
\end{theorem}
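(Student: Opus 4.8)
The plan is to avoid any contraction/fixed-point scheme — the quasilinear term $u\partial_xu$ prevents the flow map from being Lipschitz, as the statement (which asserts only continuity) reflects — and instead to combine an energy estimate with refined Strichartz estimates, closing an a priori bound by a continuity argument, in the spirit of Koch--Tzvetkov and Kenig--Koenig for the Benjamin--Ono equation. Note that the endpoint $\alpha=1$ gives $s(\alpha)=\frac98$, which is exactly Kenig--Koenig's threshold, so the present result should be the natural fractional-dispersion analogue.

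First I would record the linear estimates for the group $U(t)=e^{tD^{\alpha}\partial_x}$, whose symbol $e^{it\xi|\xi|^{\alpha}}$ has phase $\phi(\xi)=\xi|\xi|^{\alpha}$ with $|\phi''(\xi)|\sim|\xi|^{\alpha-1}$. Frequency localization at $|\xi|\sim N$ and van der Corput give the dispersive bound $\|P_NU(t)f\|_{L^\infty_x}\lesssim(N^{\alpha-1}|t|)^{-1/2}\|P_Nf\|_{L^1}$, whence by $TT^*$ the Strichartz estimate $\|P_NU(t)f\|_{L^4_TL^\infty_x}\lesssim N^{(1-\alpha)/4}\|P_Nf\|_{L^2}$, together with the Kato local smoothing $\|D^{\alpha/2}U(t)f\|_{L^\infty_xL^2_T}\lesssim\|f\|_{L^2}$ and a maximal-function bound in $L^2_xL^\infty_T$. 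The inhomogeneous (Duhamel) versions, obtained by pairing two half-derivative smoothing gains, recover up to $\alpha$ derivatives on the forcing term; this is what makes the derivative in the nonlinearity $u\partial_xu=\frac12\partial_x(u^2)$ affordable at low regularity.

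The heart of the argument is a refined Strichartz estimate controlling $\|\partial_xu\|_{L^1_TL^\infty}$. For this I would partition $[0,T]$ into subintervals $I_j$ of length $\delta=N^{-\kappa}$, apply the linear Strichartz estimate to $P_Nu$ on each $I_j$ through the Duhamel formula based at the left endpoint, and reassemble the pieces in $\ell^2_j$, using the local smoothing estimate to control the forcing contribution and the distribution of $\|P_Nu(t_j)\|_{L^2}$ across the partition. Optimizing $\kappa$ and summing over dyadic $N$ against the $H^s$ weight should produce convergence precisely when $s>s(\alpha)=\frac32-\frac{3\alpha}8$, with a positive power of $T$ left over to supply the smallness that closes the estimate (the low-frequency part being harmless). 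I expect this optimization — balancing the Strichartz gain $N^{(1-\alpha)/4}$, the smoothing gain $D^{\alpha/2}$, and the loss of one derivative from $\partial_x(u^2)$, with exact dyadic bookkeeping — to be the main technical obstacle, and the step where the precise exponent $s(\alpha)$ is forced.

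With this in hand I would run the energy estimate: applying $\langle D\rangle^{s}$ to the equation, pairing with $\langle D\rangle^{s}u$, using that $D^{\alpha}\partial_x$ is skew-adjoint (so the dispersion does not contribute) and a Kato--Ponce commutator estimate on the nonlinear term, to obtain $\frac{d}{dt}\|u\|_{H^s}^2\lesssim\|\partial_xu\|_{L^\infty}\|u\|_{H^s}^2$, hence $\|u\|_{L^\infty_TH^s}\le\|u_0\|_{H^s}\exp(C\|\partial_xu\|_{L^1_TL^\infty})$. Feeding the refined Strichartz bound into this and running a continuity/bootstrap argument yields, for $T=T(\|u_0\|_{H^s})$ small, an a priori bound on $\|u\|_{L^\infty_TH^s}+\|\partial_xu\|_{L^1_TL^\infty}$. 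Existence of a solution in $C([0,T];H^s)$ with the stated auxiliary regularity then follows by regularizing the data or the equation (parabolic/dispersive regularization), deriving uniform bounds, and passing to the limit by Aubin--Lions compactness. Finally, since no Lipschitz flow is available, uniqueness and continuity of the data-to-solution map would be obtained by a Bona--Smith argument: the difference of two solutions is estimated in a weaker norm (for instance $L^2$ or $H^{s-1}$), and the frequency-truncation parameter is tuned against the regularity to upgrade convergence of the regularized solutions to continuity in $H^s$. This last point, together with the refined Strichartz estimate, is the second delicate part of the proof.
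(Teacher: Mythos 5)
Your proposal follows essentially the same route as the paper: refined Strichartz estimates obtained by slicing $[0,T]$ into frequency-dependent subintervals (à la Koch--Tzvetkov and Kenig--Koenig), combined with the Kato--Ponce energy estimate, a nonlinear Kato-type local smoothing effect paired with the $L^2_xL^\infty_T$ maximal function bound to absorb the derivative in $u\partial_xu$, a bootstrap closing at $s>\frac32-\frac{3\alpha}8$, and the Bona--Smith argument for uniqueness, convergence in $H^s$, and continuity of the flow. The only slight imprecision is the suggestion that the Duhamel term recovers up to $\alpha$ derivatives by pairing two smoothing gains; the argument actually closes with the gain of only $\frac{\alpha}2$ derivatives from the (nonlinear) local smoothing identity, which is precisely what forces the exponent $s(\alpha)$.
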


\begin{remark} \label{IP}Theorem \ref{maintheo} fails in the case $\alpha=0$. Indeed, it is a classical result that the  
IVP associated to Burgers equation \eqref{Burgers} is ill-posed in $H^{\frac32}(\mathbb R)$. For the convenience of the reader, we sketch briefly the proof of this fact. \\
Let $u_0 \in H^{\frac32}(\mathbb R) \setminus W^{1,\infty}(\mathbb R)$ be such that $u_0'(x) \underset{x \to 0}{\longrightarrow} -\infty$ and $u_0 \in C^{\infty}(\mathbb R \setminus \{0\})$. We approximate $u_0$ by a regularizing sequence $\{ u_{0,\epsilon}\} \subset C^{\infty}(\mathbb R) \cap H^2(\mathbb R)$ such that \begin{equation}\label{Billposed}
\|u_{0,\epsilon}-u_0\|_{H^{\frac32}} \underset{\epsilon \to 0}{\longrightarrow}0 \quad \text{and} \quad u_{0,\epsilon}'(0) \to -\infty \quad \text{as}\quad \epsilon \to 0.
\end{equation}
Assume that the IVP associated to \eqref{Burgers} is well-posed in $H^{\frac32}(\mathbb R)$. Then there exists a positive time $T=T_{\frac32}(u_0)$ and a solution $u\in C([0,T]:H^{\frac32}(\R))$ of \eqref{Burgers} emanating from $u_0$. Let us prove first that we have the {\it persistency property}, that is, assuming furthermore  that $u_0\in H^s(\R),$ for some $s>\frac{3}{2},$ that the corresponding solution is defined in $C([0,T']: H^s(\R)),$ with $T'>T.$ We follow an argument in \cite{S}. For any $\eta>0$ such that $\frac{3}{2}<\frac{3}{2}+\eta<s,$ we have $\|u_x\|_{L^\infty}\leq \frac{C}{\sqrt \eta}\|u\|_{H^{3/2+\eta}}$ and the interpolation inequality $\|u\|_{H^{\frac32+\eta}}\leq C \|u\|^{1-\theta}_{H^{\frac32}}\|u\|^\theta_{H^s},$ where $\theta=\frac{\eta}{s-\frac32}.$ By the classical $H^s, s>\frac{3}{2},$ theory of the Burgers equation, one has the energy estimate on $[0,T]$
$$\frac{d}{dt}\|u\|_{H^s}^2\leq C\|u_x\|_{L^{\infty}}\|u\|^2_{H^s}.$$
Consequently, $\|u(t)\|_{H^s}^2$ is majorized by the solution $y(t)$ of the differential equation
$$y'(t)=\frac{C}{\sqrt \eta}y^{1+\theta/2}$$
on its maximal time of existence $[0, T(\eta)].$ One easily finds that $y^{\theta/2}(t)=\frac{2y_0^{\theta/2}}{2-\frac{\sqrt \eta}{s-3/2}Cy_0^{\theta/2}t},$ proving that $T(\eta)\to+\infty$ as $\eta \to 0,$ so that $T'>T.$
\\Coming back to the approximate sequence $\{ u_{0,\epsilon}\},$ we denote by $u_{\epsilon}$ the solution associated to $u_{0,\epsilon}$ it follows from \eqref{Billposed} and the standard theory of the Burgers equation that its associated time of existence $T_{\epsilon,s}$ in $H^s$ for any  $s>\frac{3}{2}$ satisfies 
\begin{displaymath}
T_{\epsilon,s}=-\frac{1}{\inf_{ \R} u'_{0,\epsilon}} \to 0 \quad \text{as}\quad \epsilon \to 0.
\end{displaymath}
By the previous consideration, the existence time $T_{\epsilon,\frac32}$ in $H^{\frac32}(\R)$ satisfies\\ $T_{\epsilon,\frac32}\leq T_{\epsilon,s}.$
Letting $\epsilon \to 0,$ the continuity of the flow map in $H^{\frac32}(\mathbb R)$ would imply $T=0,$ a contradiction.
\end{remark}

\begin{remark} 
In the case $\alpha=1$ in Theorem \ref{maintheo}, we get $s(1)=\frac98$, which corresponds to Kenig, Koenig's result for BO \cite{KK}.
\end{remark}

\begin{remark} Of course, the problem to prove well-posedness in $H^{\frac{\alpha}2}(\mathbb
R)$ in the case $\frac12 \le \alpha<1$, which would imply global
well-posedness by using the conserved quantities \eqref{M} and
\eqref{H}, is still open. This conjecture is supported by the numerical simulations in \cite{KS} that suggest that the solution is global in this case. The use of the techniques in \cite{HIKK}  might be useful to lower the value of $s$. Observe that the value $\alpha= 1/2$ is the {\it $L^2$ critical exponent}.
\end{remark}
\begin{remark}

It has been proven in \cite{MST}  that, for $0<\alpha <2$ the Cauchy problem is $C^2$- ill-posed\footnote{That is that the flow map cannot be $C^2.$} for initial data in any Sobolev spaces $H^s(\mathbb R)$, $s \in \mathbb R$, and in particular that the Cauchy problem cannot be solved by a Picard iterative scheme implemented on the Duhamel formulation. 

 On the other hand, it is
well-known that one can still prove local
well-posedness\footnote{Assuming only the continuity of the flow}
for equation \eqref{dispBurgers} below $H^{\frac32+}(\mathbb R)$
when $ \alpha  \ge 1$. Actually, the Benjamin-Ono equation
(corresponding to $\alpha=1$) is well-posed in $L^2(\mathbb R)$
\cite{IK,MP} as well as  equation \eqref{Burgers-Hilbert} when
$1<\alpha<2$ \cite{HIKK} (see also \cite{Guo} for former results).
The question to know whether the same occurs in the
case $0<\alpha<1$ seems to be still open.
\end{remark}

\begin{remark}
Theorem \ref{maintheo}   extends easily by perturbation  to some non  pure power dispersions. For instance, in the case of \eqref{WW}, it suffices to observe that
$$(1+\xi^2)^{1/2}\left(\frac{\tanh |\xi|}{|\xi|}\right)^{1/2}=|\xi|^{1/2}+R(|\xi|),$$ 
where $|R(|\xi|)|\leq |\xi|^{-3/2}$ for large $|\xi|.$
\end{remark}

\begin{remark}
 One could wonder about  the existence of global solutions with small initial data. This was solved in \cite{SSS} when $\alpha\geq 1$
but the case $\alpha<1$ seems to be open. 
\end{remark}

\begin{remark}
It has been proven in \cite{SSS} that the fundamental solution $G_{\alpha}$ of \eqref{dispBurgers} can be written as
$$G_{\alpha}(x,t)=t^{-1/(\alpha +1)}A\left(\frac{x}{t^{1/(\alpha +1)}}\right),$$
where $A$ satisfies the following anisotropic behavior at infinity :
$$
A(z)\underset{z \to +\infty}{\sim}\frac{C}{|z|^{\alpha +2}},\quad \text{and}
\quad A(z)\underset{z \to -\infty}{\sim} C|z|^{\frac{1-\alpha}{2\alpha}} \cos\left(\alpha \left(\frac{|z|}{\alpha+1}\right)+\frac{\pi}{4}\right)\,.
$$
This suggests the possibility of existence of global weak solutions with initial data in a $L^2$ space with anisotropic weight as it is the case for the KdV equation (see \cite{Ka}).
\end{remark}

We now discuss the main ingredients in the proof of Theorem \ref{maintheo}. Since we cannot prove Theorem \ref{maintheo} by a contraction method as explained above, we  use a compactness argument. Standard energy estimates, the Kato-Ponce commutator estimate and Gronwall's inequality provide the following bound for smooth solutions 
\begin{displaymath} 
\|u\|_{L^{\infty}_TH^s_x} \le c \|u_0\|_{H^s_x}e^{c\int_0^T\|\partial_xu\|_{L^{\infty}_x}dt}.
\end{displaymath}  
Therefore, it is enough to control $\|\partial_xu\|_{L^1_TL^{\infty}_x}$ at the $H^s$-level to obtain our \textit{a priori} estimates.

Note that the classical Strichartz estimate for the free group $e^{tD^{\alpha}\partial_x}$ associated to the linear part of \eqref{dispBurgers}, and derived by Kenig, Ponce and Vega in \cite{KPV1}, induces a loss of $\frac{1-\alpha}4$ derivatives in $L^{\infty}$, since we are in the case $0<\alpha<1$ (see Remark \ref{RemarkStrichartz} below). Then, we need to use a refined version of this Strichartz estimate, derived by chopping the time interval in small pieces whose length depends on the spatial frequency of the function (see Proposition \ref{refinedStrichartz} below). This estimate was first established by Kenig and Koenig \cite{KK}  (based on previous ideas of Koch and Tzevtkov \cite{KoTz}) in the Benjamin-Ono context (when $\alpha=1$) .  

We also use  a maximal function estimate for $e^{tD^{\alpha}\partial_x}$  in the case $0<\alpha<1$, which follows directly from the arguments of Kenig, Ponce and Vega \cite{KPV2}. Moreover to complete our argument, we need a local smoothing effect for the solutions of the nonlinear equation \eqref{dispBurgers}, which is based on series expansions and remainder estimates for commutator of the type $[D^{\alpha}\partial_x,u]$ derived by Ginibre and Velo \cite{GV}.

All those estimates allow us to obtain the desired \textit{a priori} bound for $\|\partial_xu\|_{L^1_TL^{\infty}_x}$ at the $H^s$-level, when $s>s(\alpha)=\frac32-\frac{3\alpha}8$, \textit{via} a recursive argument.  Finally, we conclude the proof of Theorem \ref{maintheo}, by applying the same method to the differences of two solutions of \eqref{dispBurgers} and by using the Bona-Smith argument \cite{BS}.

The plan of the paper is as follows. The next section is devoted to the proof of Theorem \ref{maintheo}. We then prove a ill-posedness result for \eqref{dispBurgers}, namely that the flow map cannot be uniformly continuous when $\frac{1}{3}<\alpha<\frac{1}{2}$, based on the existence of solitary waves the theory of which is briefly surveyed  in the last section where we present also some  open questions concerning blow-up or long time existence that will be considered in subsequent  works and comment briefly on the BBM version of the dispersive Burgers equation \eqref{dispBurgers}.

\subsection*{Notations}

The following notations will be used throughout this article: $D^s=(-\Delta)^{\frac{s}2}$ and $J^s=(I-\Delta)^{\frac{s}2}$ denote the Riesz and Bessel potentials of order $-s$, respectively. $\mathcal{H}$ denotes the Hilbert transform. Observe then that $D^1=\mathcal{H}\partial_x$.

For $1 \le p \le \infty$, $L^p(\mathbb R)$ is the usual Lebesgue space with the norm $\|\cdot \|_{L^p}$, and for $s \in \mathbb R$, the Sobolev spaces $H^s(\mathbb R)$ is defined via its usual norm $\|\phi \|_{H^s}= \|J^s \phi\|_{L^2}$.

Let $f=f(x,t)$ be a function defined for $x \in
\mathbb R$ and $t$ in the time interval $[0,T]$, with $T>0$ or in the whole line $\mathbb R$. Then if $X$
is one of the spaces defined above, we define the spaces $L^p_TX_x$ and
$L^p_tX_x$ by the norms
\begin{displaymath}
\|f\|_{L^p_TX_x} =\Big(
\int_{0}^T\|f(\cdot,t)\|_{X}^pdt\Big)^{\frac1p} \quad \text{and} \quad
\|f\|_{L^p_tX_x} =\Big( \int_{\mathbb R}\|f(\cdot,t)\|_{X}^pdt\Big)^{\frac1p},
\end{displaymath}
when $1 \le p < \infty$, with the natural modifications for $p=\infty$. Moreover, we use similar definitions for the spaces $L^q_xL^p_t$ and $L^q_xL^p_T$, with $1 \le p, \ q \le \infty$.

Finally, we say that $A \lesssim B$ if there exists a constant $c>0$ such that $A \le cB$ (it will be clear from the context what parameters $c$ may depend on).

\section{Proof of Theorem \ref{maintheo}} We start by proving various dispersive estimates.

\subsection{Linear estimates, energy estimates and local smoothing effect}
In this section, we consider the linear IVP associated to \eqref{dispBurgers}
\begin{equation} \label{dispBurgerslinear}
\left\{\begin{array}{ll}
\partial_tu-D^{\alpha}\partial_xu=0 \\ 
u(x,0)=u_0(x),
\end{array}
\right.
\end{equation}
whose solution is given by the unitary group $e^{tD^{\alpha}\partial_x}$, defined by
\begin{equation} \label{group}
e^{tD^{\alpha}\partial_x}u_0=\mathcal{F}^{-1}\big(e^{it|\xi|^{\alpha}\xi}\mathcal{F}(u_0)\big) \ .
\end{equation}
We will study the properties of $e^{tD^{\alpha}\partial_x}$ in the case where $0<\alpha<1$.

\subsection{Strichartz estimates} The following estimate is obtained as an application of Theorem 2.1 in \cite{KPV1}.
\begin{proposition} \label{Strichartz}
Assume that $0<\alpha<1$. Let $q$ and  $r$ satisfy $\frac2q+\frac1r=\frac12$ with $2 \le q,r \le +\infty$. Then 
\begin{equation} \label{Strichartz1} 
\|e^{tD^{\alpha}\partial_x}D^{\frac{\alpha-1}q}u_0 \|_{L^q_tL^r_x} \lesssim \|u_0\|_{L^2} \ , 
\end{equation}
for all $u_0 \in L^2(\mathbb R)$.
\end{proposition}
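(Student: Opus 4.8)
The plan is to deduce Proposition~\ref{Strichartz} from the general Strichartz machinery of Kenig, Ponce and Vega (Theorem~2.1 in \cite{KPV1}). That abstract result applies to a unitary group $e^{it\phi(D)}$ whose phase symbol $\phi(\xi)$ has appropriate homogeneity and nondegeneracy properties, and it produces $L^q_tL^r_x$ estimates with a gain of derivatives measured by the Hessian (here, the second derivative) of $\phi$. In our situation the phase is $\phi(\xi)=|\xi|^{\alpha}\xi$, so $\phi'(\xi)=(\alpha+1)|\xi|^{\alpha}$ and $\phi''(\xi)=\alpha(\alpha+1)|\xi|^{\alpha-1}$. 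The order of the decay/smoothing is thus governed by the homogeneity degree $\alpha-1$ of $\phi''$, and this is exactly what will fix the power $D^{\frac{\alpha-1}{q}}$ appearing on the left-hand side.

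The key steps I would carry out are as follows. First, I would recall the precise statement of Theorem~2.1 in \cite{KPV1}: for a phase $\phi$ with $\phi''$ homogeneous of degree $m-2$ (in our notation $m-2 = \alpha-1$, i.e.\ $m=\alpha+1$) and bounded away from zero on the unit sphere, one has the one-parameter family of estimates
\begin{equation} \label{abstract}
\big\| D^{\frac{m-2}{q}} e^{it\phi(D)} u_0 \big\|_{L^q_tL^r_x} \lesssim \|u_0\|_{L^2},
\end{equation}
for the admissible pairs $(q,r)$ satisfying $\frac2q+\frac1r=\frac12$, $2\le q,r\le\infty$. Second, I would verify the hypotheses for $\phi(\xi)=|\xi|^{\alpha}\xi$: it is $C^\infty$ away from the origin, odd, $\phi''(\xi)=\alpha(\alpha+1)|\xi|^{\alpha-1}$ is homogeneous of degree $\alpha-1$ and does not vanish for $\xi\neq0$ (here one uses $0<\alpha<1$, so $\alpha(\alpha+1)>0$). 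Third, substituting $m-2=\alpha-1$ into \eqref{abstract} and recalling that the symbol of $D^{\frac{\alpha-1}{q}}$ is $|\xi|^{\frac{\alpha-1}{q}}$, which commutes with the Fourier multiplier $e^{it|\xi|^\alpha\xi}$, I would rewrite $D^{\frac{m-2}{q}}e^{t D^\alpha\partial_x}u_0 = e^{tD^\alpha\partial_x}D^{\frac{\alpha-1}{q}}u_0$, yielding \eqref{Strichartz1} exactly.

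The main obstacle is less a matter of deep difficulty than of hypothesis-checking at low frequencies: the estimate in \cite{KPV1} is naturally a homogeneous (frequency-localized or Besov-type) statement, and the symbol $|\xi|^{\alpha-1}$ of the smoothing operator is singular at $\xi=0$ when $\alpha<1$. I would need to confirm that the theorem is formulated (or can be applied via a Littlewood--Paley decomposition and summation over dyadic blocks) so that the homogeneous weight $D^{\frac{\alpha-1}{q}}$ is the correct and admissible gain, rather than an inhomogeneous $J^{\frac{\alpha-1}{q}}$; this is precisely why the estimate is stated with the homogeneous operator $D^{\frac{\alpha-1}{q}}$ acting on $u_0$ and requires $u_0\in L^2$ only. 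Once the homogeneity bookkeeping is pinned down, the rest is a direct specialization, and I would remark (consistently with Remark~\ref{RemarkStrichartz}) that the endpoint choice $q=r=\infty$ in the admissibility relation, giving the $L^\infty$ estimate, costs $\frac{1-\alpha}{4}$ derivatives when $0<\alpha<1$, which motivates the refined, frequency-dependent time-chopping estimate developed later.
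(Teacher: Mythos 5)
Your proposal is correct and follows essentially the same route as the paper, which simply invokes Theorem 2.1 of \cite{KPV1} for the phase $\phi(\xi)=|\xi|^{\alpha}\xi$; your verification that $|\phi''(\xi)|=\alpha(\alpha+1)|\xi|^{\alpha-1}$ is homogeneous of degree $\alpha-1$ and nonvanishing away from the origin, together with the identification $m-2=\alpha-1$, is exactly the hypothesis-checking that citation leaves implicit. The only cosmetic point is that $\phi''(\xi)=\alpha(\alpha+1)\,\mathrm{sgn}(\xi)|\xi|^{\alpha-1}$ carries a sign, but only its modulus enters the oscillatory-integral estimate, so nothing changes.
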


\begin{remark} \label{RemarkStrichartz}
In particular, if we choose $(q,r)=(4,\infty)$, then we obtain from \eqref{Strichartz1} a Strichartz estimate with a lost of $(1-\alpha)/4$ derivatives 
\begin{displaymath}  
\|e^{tD^{\alpha}\partial_x}u_0 \|_{L^4_tL^{\infty}_x} \lesssim \|D^{\frac{1-\alpha}4}u_0\|_{L^2} \ . 
\end{displaymath}
\end{remark}

Next, we derive a refined Strichartz estimate for solutions of the nonhomogeneous linear equation
\begin{equation} \label{dispBurgerslinearnonhomog} 
\partial_tu-D^{\alpha}\partial_xu=F \ .
\end{equation}
This estimate generalizes the one derived by Kenig and Koenig in the Benjamin-Ono case $\alpha=1$ (c.f. Proposition 2.8 in \cite{KK}). Note that the proof of Proposition 2.8 in \cite{KK} is based on previous ideas of Koch and Tzvetkov \cite{KoTz}.

\begin{proposition} \label{refinedStrichartz}
Assume that $0<\alpha<1$, $T>0$ and $\delta \ge 0$. Let $u$ be a smooth solution to \eqref{dispBurgerslinearnonhomog} defined on the time interval $[0,T]$. Then, there exist $0<\kappa_1, \ \kappa_2<\frac12$ such that 
\begin{equation} \label{refinedStrichartz1}
\|\partial_xu\|_{L^2_TL^{\infty}_x} \lesssim T^{\kappa_1}\|J^{1+\frac{\delta}4+\frac{1-\alpha}4+\theta}u\|_{L^{\infty}_TL^2_x}+T^{\kappa_2}\|J^{1-\frac{3\delta}4+\frac{1-\alpha}4+\theta}F\|_{L^2_{T,x}}  \ ,
\end{equation}
for any $\theta>0$.
\end{proposition}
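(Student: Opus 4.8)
The plan is to prove the refined Strichartz estimate \eqref{refinedStrichartz1} by chopping the time interval $[0,T]$ into small subintervals whose length is tuned to the spatial frequency, applying the fixed‑time Strichartz estimate from Proposition \ref{Strichartz} (in the endpoint form $(q,r)=(4,\infty)$ recorded in Remark \ref{RemarkStrichartz}) on each piece, and then summing the contributions in the frequency‑dyadic decomposition. First I would introduce a Littlewood–Paley decomposition $u=\sum_{N}P_Nu$, where $P_N$ localizes to frequencies $|\xi|\sim N$ (dyadic $N\ge 1$), and reduce matters to estimating $\|\partial_xP_Nu\|_{L^2_TL^\infty_x}$ with a gain in $N$ that can be summed. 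Since $\partial_x$ and $P_N$ contribute a factor $N$, the target on each block is a bound of the form $N\,\|P_Nu\|_{L^2_TL^\infty_x}\lesssim T^{\kappa}N^{-\theta}\big(\|J^{\,1+\frac{\delta}4+\frac{1-\alpha}4+\theta}u\|_{L^\infty_TL^2}+\cdots\big)$, after which one sums the geometric series in $N$.

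The key step is the localized Strichartz argument on a single dyadic block. On a subinterval $I_j$ of length $\tau$, I would write $P_Nu$ via Duhamel as $P_Nu(t)=e^{(t-t_j)D^\alpha\partial_x}P_Nu(t_j)+\int_{t_j}^{t}e^{(t-t')D^\alpha\partial_x}P_NF(t')\,dt'$ and apply the $L^4_tL^\infty_x$ estimate from Remark \ref{RemarkStrichartz} on $I_j$, paying $N^{\frac{1-\alpha}4}$ for the derivative loss and using Minkowski's inequality for the inhomogeneous term. Converting the resulting $L^4_{I_j}$ norms to $L^2_{I_j}$ via Hölder costs a factor $\tau^{1/4}$, and summing the squares over the $\sim T/\tau$ subintervals produces the overall powers of $T$ and $\tau$. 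The crucial bookkeeping is the choice $\tau=\tau(N)=N^{-\gamma}$ for a suitable exponent $\gamma$: one picks $\gamma$ so that the combined powers of $N$ (from the derivative loss $N^{\frac{1-\alpha}4}$, from $\partial_x$, and from the $\tau$‑factors $\tau^{\pm}$ weighting the homogeneous and inhomogeneous parts) leave a net negative power $N^{-\theta}$ while keeping the exponents of $T$ strictly between $0$ and $\tfrac12$. This is exactly the mechanism of Kenig–Koenig \cite{KK} and Koch–Tzvetkov \cite{KoTz}; the asymmetry between the weights $\tfrac\delta4$ on $u$ and $-\tfrac{3\delta}4$ on $F$ in \eqref{refinedStrichartz1} reflects that the homogeneous and inhomogeneous pieces accrue different powers of $\tau$, so the free parameter $\delta$ is introduced precisely to balance them.

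The main obstacle I expect is the careful tracking of all the exponents so that the two time powers $\kappa_1,\kappa_2$ come out in $(0,\tfrac12)$ simultaneously while the frequency sum converges. Concretely, after the localized estimate one obtains a bound of the schematic form
\[
N\,\|P_Nu\|_{L^2_TL^\infty_x}\lesssim T^{\kappa_1}\,\tau^{a}\,N^{1+\frac{1-\alpha}4}\,\|P_Nu\|_{L^\infty_TL^2}+T^{\kappa_2}\,\tau^{b}\,N^{1+\frac{1-\alpha}4}\,\|P_NF\|_{L^2_{T,x}},
\]
and substituting $\tau=N^{-\gamma}$ must turn the $N$‑powers into $N^{\,1+\frac{1-\alpha}4-\gamma a}$ and $N^{\,1+\frac{1-\alpha}4-\gamma b}$; matching these against the regularities $1+\frac\delta4+\frac{1-\alpha}4+\theta$ and $1-\frac{3\delta}4+\frac{1-\alpha}4+\theta$ demanded in \eqref{refinedStrichartz1} fixes $\gamma$ and the relation between $\delta$ and the gain $\theta$. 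I would therefore carry out this calculation last, only after the per‑block estimate is in hand, and verify at the end that the admissible range of $\gamma$ indeed keeps both $\kappa_1,\kappa_2<\tfrac12$; the strict positivity of $\theta$ is what provides the slack making the dyadic sum $\sum_N N^{-\theta}$ finite.
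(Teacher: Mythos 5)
Your proposal follows essentially the same route as the paper: a Littlewood--Paley decomposition, time-slicing into frequency-dependent subintervals, the Strichartz estimate of Proposition \ref{Strichartz} applied via Duhamel on each slice, H\"older in time to pass between $L^q_{I_j}$ and $L^2_{I_j}$, and summation over the slices and over the dyadic blocks using the $\theta$-slack (the paper reaches $L^\infty_x$ through a finite-$r$ pair plus Sobolev embedding rather than the $(4,\infty)$ endpoint, but that is an equivalent maneuver). The one detail to adjust is the subinterval length: it should be $T^{\kappa}N^{-\delta}$ rather than a pure power $N^{-\gamma}$, since with the latter the homogeneous term comes out with $\kappa_1=\tfrac12$ and the inhomogeneous one with $\kappa_2=0$, whereas carrying the extra factor $T^{\kappa}$ yields $\kappa_1=\tfrac12-\tfrac{\kappa}{q}$ and $\kappa_2=\kappa(1-\tfrac1q)$, both strictly between $0$ and $\tfrac12$ as claimed.
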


\begin{remark} In our analysis, the optimal choice in estimate \eqref{refinedStrichartz1}  corresponds to $\delta=1-\frac{\alpha}2$. Indeed, if we denote $a=1+\frac{\delta}4+\frac{1-\alpha}4+\theta$ and $b=1-\frac{3\delta}4+\frac{1-\alpha}4+\theta$, we should adapt $\delta$ to get $a=b+1-\frac{\alpha}2$, since we need to absorb $1$ derivative appearing in the nonlinear part of \eqref{dispBurgers} and we are able to recover $\frac{\alpha}2$ derivatives by using the smoothing effect associated with solutions of \eqref{dispBurgerslinearnonhomog}. The use of $\delta=1-\frac{\alpha}2$ in estimate \eqref{refinedStrichartz1} provides the optimal regularity $s>s(\alpha)=\frac32-\frac{3\alpha}8$ in Theorem \ref{maintheo}.
\end{remark}

\begin{proof}[Proof of Proposition \ref{refinedStrichartz}] Following the arguments in \cite{KK}, and \cite{KoTz}, we use a nonhomogeneous Littlewood-Paley
decomposition, $u=\sum_Nu_N$ where $u_N=P_Nu$, $N$ is a dyadic number and $P_N$ is a Littlewood-Paley projection around $|\xi| \sim N$. Then, we get from the Sobolev embedding and the Littlewood-Paley theorem
\begin{displaymath}
\|\partial_xu\|_{L^2_TL^{\infty}_x}\lesssim \|J^{\theta'}\partial_xu\|_{L^2_TL^r_x}
\lesssim \Big(\underset{N}{\sum}\|J^{\theta'}\partial_xu_N\|_{L^2_TL^r_x}^2\Big)^{1/2},
\end{displaymath}
whenever $\theta'r>1$. Therefore, it is enough to prove that
\begin{equation} \label{refinedStrichartz2}
\|\partial_xu_N\|_{L^2_TL^{\infty}_x} \lesssim T^{\kappa_1}\|D^{1+\frac{\delta}4+\frac{1-\alpha}4-\frac{\delta+1-\alpha}{2r}}u_N\|_{L^{\infty}_TL^2_x}+T^{\kappa_2}\|D^{1-\frac{3\delta}4+\frac{1-\alpha}4-\frac{\delta+1-\alpha}{2r}}F_N\|_{L^2_{T,x}}
\end{equation}
for any $r>2$ and any dyadic number $N \ge 1$.

In order to prove estimate \eqref{refinedStrichartz2}, we chop out the interval in small intervals of length  $T^{\kappa}N^{-\delta}$ where $\kappa$ is a small positive number to be fixed later. In other words, we have that $[0,T]=\underset{j \in J}{\bigcup}I_j$ where $I_j=[a_j, b_j]$, $|I_j|\thicksim T^{\kappa}N^{-\delta}$ and $\# J\sim T^{1-\kappa}N^{\delta}$. Let $q$ be such that $\dfrac2q+\dfrac1r=\dfrac12$. Then, since $u_N$ satisfies $\partial_tu_N-D^{\alpha}\partial_xu_N=F_N$, we deduce that
\begin{displaymath}
\begin{split}
\|\partial_xu_N\|_{L^2_TL^r_x} &=\Big(\underset{j}{\sum}\|\partial_xu_N\|^2_{L^2_{I_j}L^r_x}\Big)^{1/2}  \\ &
\le (T^{\kappa}N^{-\delta})^{\frac12-\frac1q}\Big(\underset{j}{\sum}\|\partial_xu_N\|^2_{L^q_{I_j}L^r_x}\Big)^{1/2}\\
&\lesssim (T^{\kappa}N^{-\delta})^{\frac12-\frac1q}\Big(\underset{j}{\sum}\|e^{(t-a_j)D^{\alpha}\partial_x}\partial_xu_N(a_j)\|^2_{L^q_{I_j}L^r_x} \\& \quad
+\sum_j\big\|\int\limits_{a_j}^t e^{(t-t')D^{\alpha}\partial_x}\partial_xF_N(t')\,dt'\big\|^2_{L^q_{I_j}L^r_x}\Big)^{1/2} \ .
\end{split}
\end{displaymath}
Therefore, it follows from estimate \eqref{Strichartz1} that
\begin{displaymath}
\begin{split}
\|\partial_xu_N\|_{L^2_TL^{\infty}_x} 
&\lesssim (T^{\kappa}N^{-\delta})^{\frac12-\frac1q}\,\Big\{\Big(\underset{j}{\sum}\|D^{\frac{1-\alpha}q}\partial_xu_N\|^2_{L^{\infty}_TL^2_x}
\Big)^{1/2} \\ & \quad+\Big(\underset{j}{\sum} T^{\kappa}N^{-\delta}\int\limits_{I_j}\|D^{\frac{1-\alpha}q}\partial_xF_N\|_{L^2_x}^2\,dt
\Big)^{1/2}\Big\}\\
&\le (T^{\kappa}N^{-\delta})^{\frac12-\frac1q}(T^{1-\kappa}N^{\delta})^{\frac12}\|D^{1+\frac{1-\alpha}q}u_N\|_{L^{\infty}_TL^2_x} \\ & \quad
+(T^{\kappa}N^{-\delta})^{\frac12-\frac1q}(T^{\kappa}N^{-\delta})^{\frac12}\Big(\int\limits_0^T\|D^{1+\frac{1-\alpha}q}F_N\|_{L^2_x}^2\,dt\Big)^{1/2}\\
&\lesssim T^{\frac12-\frac{\kappa}q}\|D^{1+\frac{1-\alpha}q+\frac{\delta}q}u_N\|_{L^{\infty}_TL^2_x}+T^{\kappa(1-\frac1q)}
\|D^{1+\frac{1-\alpha}q-\delta+\frac{\delta}{q}}F_N\|_{L^2_{T,x}} \ ,
\end{split}
\end{displaymath}
which implies estimate \eqref{refinedStrichartz2} since $\dfrac1q=\dfrac14-\dfrac{1}{2r}$. Thus given $\theta>0$, choosing $\theta'$ and
$r$ such that $\theta'-\frac{\delta+1-\alpha}{2r}<\theta$, $\kappa_1=\frac12-\frac{\kappa}q$, $ \kappa_2=\kappa(1-\frac1q)$, with $\kappa=\frac12$, the lemma follows.
\end{proof}

\subsection{Maximal function estimate}
Arguing as in the proof of Theorem 2.7 in \cite{KPV2}, we get the following maximal function estimate in $L^2$ for the group $e^{tD^{\alpha}\partial_x}$.
\begin{proposition} \label{maximalfunction}
Assume that $0<\alpha<1$. Let $s>\frac12$. Then, we have that 
\begin{equation} \label{maximalfunction1}
\|e^{tD^{\alpha}\partial_x}u_0\|_{L^2_xL^{\infty}_{[-1,1]}} \le \Big(\sum_{j=-\infty}^{+\infty}\sup_{|t| \le 1}\sup_{j \le x < j+1}|e^{tD^{\alpha}\partial_x}u_0(x)|^2 \Big)^{\frac12} \lesssim  \|u_0\|_{H^s} \ ,
\end{equation}
for any $u_0 \in H^s(\mathbb R)$.
\end{proposition}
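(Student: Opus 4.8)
The plan is to follow the oscillatory-integral scheme of Kenig--Ponce--Vega, reducing the estimate to a single Littlewood--Paley block and a $TT^*$ argument. Write $u_0=P_0u_0+\sum_{N\ge1}P_Nu_0$, where $P_N$ localizes to $|\xi|\sim N$ (dyadic) and $P_0$ to $|\xi|\lesssim1$, and set $\phi(\xi)=|\xi|^{\alpha}\xi$, so that $e^{tD^{\alpha}\partial_x}=e^{it\phi(D)}$. Since $\sup_{|t|\le1}|\sum_NG_N(x,t)|\le\sum_N\sup_{|t|\le1}|G_N(x,t)|$ with $G_N=e^{it\phi(D)}P_Nu_0$, the triangle inequality in the norm displayed in \eqref{maximalfunction1} reduces matters to the single-block bound
\begin{equation}\label{block}
\Big(\sum_{j\in\Z}\sup_{|t|\le1}\sup_{j\le x<j+1}|G_N(x,t)|^2\Big)^{1/2}\lesssim N^{1/2}\|P_Nu_0\|_{L^2},\qquad N\ge1,
\end{equation}
the low-frequency term $P_0u_0$ being controlled directly (the symbol is smooth and compactly supported, and the phase is bounded on $[-1,1]$). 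Granting \eqref{block}, Cauchy--Schwarz gives $\sum_NN^{1/2}\|P_Nu_0\|_{L^2}\le(\sum_NN^{1-2s})^{1/2}\|u_0\|_{H^s}$, and the geometric series converges precisely when $s>\frac12$.

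To prove \eqref{block} I would linearize the supremum: for each $j$ pick $x_j\in[j,j+1)$ and $t_j\in[-1,1]$ essentially attaining the inner supremum, so that the left side of \eqref{block} is comparable to $\|(G_N(x_j,t_j))_j\|_{\ell^2}$. This is $\|Sf\|_{\ell^2}$ for the operator $Sf=(e^{it_j\phi(D)}P_Nf(x_j))_j$, and, since $S$ annihilates frequencies away from $|\xi|\sim N$, it suffices to show $\|S\|_{L^2\to\ell^2}\lesssim N^{1/2}$, i.e. $\|SS^*\|_{\ell^2\to\ell^2}\lesssim N$. The matrix of $SS^*$ is $(SS^*)_{jk}=K(x_j-x_k,t_j-t_k)$ with the oscillatory kernel
\begin{equation}\label{kernel}
K(y,\tau)=\int_{\R}e^{i(y\xi+\tau\phi(\xi))}\,\chi\!\Big(\frac{\xi}{N}\Big)^2\,d\xi,
\end{equation}
$\chi$ a bump supported on $|\xi|\sim1$. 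By Schur's test it is then enough to establish $\sup_j\sum_k|K(x_j-x_k,t_j-t_k)|\lesssim N$, uniformly in the (arbitrary) sample times $t_j\in[-1,1]$.

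The kernel bounds come from stationary and nonstationary phase on $|\xi|\sim N$, where $\phi'(\xi)\sim N^{\alpha}$ and $\phi''(\xi)=\alpha(\alpha+1)|\xi|^{\alpha-1}\mathrm{sgn}(\xi)$ is sign-definite on each half-line with $|\phi''|\sim N^{\alpha-1}$. Since $|\tau|\le2$ and $|x_j-x_k|\gtrsim|j-k|$, the phase derivative $y+\tau\phi'(\xi)$ is $\sim|y|$ whenever $|y|\gg N^{\alpha}$; repeated integration by parts then yields rapid decay $|K(y,\tau)|\lesssim_M N(N|y|)^{-M}$ there, so those terms sum to $O(1)$. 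In the remaining range $|y|\lesssim N^{\alpha}$ (about $N^\alpha$ values of $k$) the trivial bound $|K|\lesssim N$ is too lossy, and one must invoke van der Corput's lemma, splitting \eqref{kernel} at $\xi=0$ to use the sign-definiteness of $\phi''$; this gives $|K(y,\tau)|\lesssim(|\tau|N^{\alpha-1})^{-1/2}$ and, more precisely, the full stationary-phase description of $K$ governed by the critical point $\phi'(\xi)=-y/\tau$.

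I expect this last point to be the crux: one must extract from \eqref{kernel} decay in $y$ strong enough---after the interplay between $y$ and $\tau$ is taken into account---to sum the near-stationary contributions to exactly $O(N)$, and do so uniformly over the arbitrary times $t_j$ produced by the supremum. This is precisely the oscillatory-integral estimate carried out in the proof of Theorem 2.7 of \cite{KPV2}; the only points to verify for the present phase are the sign-definiteness of $\phi''$ and the homogeneity bounds $\phi'\sim N^{\alpha}$, $\phi''\sim N^{\alpha-1}$ on each dyadic block (the non-smoothness of $\phi$ at the origin being irrelevant, as it affects only $P_0u_0$). Once $\|SS^*\|_{\ell^2\to\ell^2}\lesssim N$ is established, \eqref{block}, and hence \eqref{maximalfunction1}, follow.
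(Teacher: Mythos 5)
Your proposal is correct and follows essentially the same route as the paper: a Littlewood--Paley decomposition reducing matters to the block bound with constant $N^{1/2}$, obtained from the Kenig--Ponce--Vega oscillatory-kernel estimates (your Schur sum $\sup_j\sum_k|K|\lesssim N$ is exactly the paper's Lemma~\ref{maximallemma}, estimates \eqref{maximallemma2}--\eqref{maximallemma3} with $N=2^k$, the diagonal term $|K(0,\tau)|\sim N$ being what forces $s>\frac12$), followed by the geometric-series summation. The paper simply cites the proof of Theorem 2.7 in \cite{KPV2} with the modified kernel function $H^{\alpha}_k$, which is the argument you have reconstructed in $TT^*$ form.
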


The key point in the proof of Proposition \ref{maximalfunction} is the analogous of Proposition 2.6 in \cite{KPV2} in the case $0<\alpha<1$.

\begin{lemma} \label{maximallemma}
Assume that $0<\alpha<1$. Let $\psi_k$ be a $C^{\infty}$ function supported in the set $\{\xi \in \mathbb R \ : \ 2^{k-1} \le |\xi| \le 2^{k+1}\}$, where $k \in \mathbb Z_+$.  Then, the function $H_k^{\alpha}$ defined as 
\begin{equation} \label{maximallemma1} 
H^{\alpha}_k(x)= \left\{\begin{array}{ll} 2^k & \text{if} \ |x| \le 1 \\ 2^{\frac{k}2}|x|^{-\frac12} & \text{if} \ 1 \le |x| \le c2^{\alpha k} \\ (1+x^2)^{-1} & \text{if} \ |x|>c2^{\alpha k}\end{array} \right. ,
\end{equation}
satisfies 
\begin{equation} \label{maximallemma2}
\Big|\int_{\mathbb R}e^{i(t|\xi|^{\alpha}\xi+x\xi)}\psi_k(\xi)d\xi \Big| \lesssim H_k^{\alpha}(x) \ ,
\end{equation}
for $|t| \le 2$. Moreover, we have that 
\begin{equation} \label{maximallemma3} 
\sum_{l=-\infty}^{+\infty}H_k^{\alpha}(|l|) \lesssim 2^k .
\end{equation}
Note that  the implicit constants appearing in \eqref{maximallemma2} and \eqref{maximallemma3} do not depend on $t$ or $k$.
\end{lemma}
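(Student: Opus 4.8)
The plan is to estimate the oscillatory integral $I_k(x,t)=\int_{\R}e^{i(t|\xi|^\alpha\xi+x\xi)}\psi_k(\xi)\,d\xi$ by the method of (non)stationary phase, treating the three ranges of $|x|$ in \eqref{maximallemma1} separately, and then to obtain \eqref{maximallemma3} by an elementary summation. Since $|\xi|^\alpha\xi=\mathrm{sgn}(\xi)|\xi|^{\alpha+1}$, the change of variables $\xi\mapsto-\xi$ exchanges the contributions of $\{\xi>0\}$ and $\{\xi<0\}$ while sending $(t,x)\mapsto(-t,-x)$; as the claimed bounds are even in $x$ and uniform in $t$, I may assume the integration runs over $\xi>0$, where the phase is $\phi(\xi)=t\xi^{\alpha+1}+x\xi$ with $\phi'(\xi)=t(\alpha+1)\xi^\alpha+x$ and $\phi''(\xi)=t(\alpha+1)\alpha\,\xi^{\alpha-1}$. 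On the support $\xi\sim 2^k$ one has $|\phi''(\xi)|\sim|t|\,2^{k(\alpha-1)}$, and $\phi'$ is monotone because $\phi''$ keeps the sign of $t$; I will also use the elementary bounds $|\psi_k^{(j)}|\lesssim 2^{-jk}$ together with $|\mathrm{supp}\,\psi_k|\sim 2^k$.

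For $|x|\le 1$ I simply bound $|I_k|\le\int|\psi_k|\lesssim 2^k$, which is the first line of \eqref{maximallemma1}. For $|x|>c2^{\alpha k}$ with $c$ large enough (depending only on $\alpha$), the term $x$ dominates in $\phi'$, so that $|\phi'(\xi)|\ge|x|/2$ throughout the support; integrating by parts twice against $e^{i\phi}/(i\phi')$ and using the above derivative bounds (the $\phi''/(\phi')^2$ contributions being absorbed since $2^{k(\alpha-1)}/|x|\lesssim 2^{-k}$ in this range) yields $|I_k|\lesssim |x|^{-2}\lesssim(1+x^2)^{-1}$, the third line of \eqref{maximallemma1}.

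The decisive range is $1\le|x|\le c2^{\alpha k}$, where I split according to the size of $|t|$. If $|t|\ge c_1|x|2^{-\alpha k}$, I apply van der Corput's lemma with the second derivative: since $|\phi''|\gtrsim|t|2^{k(\alpha-1)}$ on the support and $\psi_k$ has $O(1)$ total variation, $|I_k|\lesssim\bigl(|t|2^{k(\alpha-1)}\bigr)^{-1/2}\lesssim|t|^{-1/2}2^{k(1-\alpha)/2}\lesssim 2^{k/2}|x|^{-1/2}$, the last step using the lower bound on $|t|$. If instead $|t|<c_1|x|2^{-\alpha k}$, then choosing $c_1=c_1(\alpha)$ small forces $|t|(\alpha+1)\xi^\alpha\le|x|/2$ on the support, hence $|\phi'|\ge|x|/2$ and $\phi'$ monotone; van der Corput's first-derivative estimate then gives $|I_k|\lesssim|x|^{-1}\le 2^{k/2}|x|^{-1/2}$ (as $|x|\ge1$). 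This establishes the middle line of \eqref{maximallemma1}, and the constants are manifestly independent of $t$ and $k$.

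Finally, \eqref{maximallemma3} follows by summing $H_k^\alpha(|l|)$ over the three ranges: the finitely many terms with $|l|\le1$ contribute $O(2^k)$; the terms with $1\le|l|\le c2^{\alpha k}$ give $2^{k/2}\sum_{1\le|l|\le c2^{\alpha k}}|l|^{-1/2}\sim 2^{k/2}\cdot 2^{k\alpha/2}=2^{k(1+\alpha)/2}\lesssim 2^k$ since $\alpha<1$; and the tail $\sum_{|l|>c2^{\alpha k}}(1+l^2)^{-1}\lesssim 2^{-k\alpha}\lesssim 2^k$ converges. Summing, $\sum_l H_k^\alpha(|l|)\lesssim 2^k$. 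I expect the main obstacle to be the sharp exponent $2^{k/2}|x|^{-1/2}$ in the middle range: one must correctly calibrate the threshold $|t|\sim|x|2^{-\alpha k}$ (equivalently, detect whether the stationary point $\xi_0=\bigl(|x|/(|t|(\alpha+1))\bigr)^{1/\alpha}$ lies in the support) so that the second-derivative van der Corput bound and the non-stationary bound each deliver exactly the desired power; the remaining estimates are routine.
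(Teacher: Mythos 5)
Your proof is correct and follows essentially the same route as the paper: the paper disposes of \eqref{maximallemma2} by invoking the van der Corput/stationary-phase argument of Proposition 2.6 in \cite{KPV2} (which is exactly the three-range analysis you carry out, with the stationary point $\xi_0\sim(|x|/|t|)^{1/\alpha}$ calibrated against the support), and its proof of \eqref{maximallemma3} is the same elementary summation you give. The only difference is that you write out in full the oscillatory-integral estimates that the paper leaves to the citation.
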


\begin{proof} The proof of estimate \eqref{maximallemma2} follows exactly as the one of Proposition 2.6 in \cite{KPV2}. Next, we prove estimate \eqref{maximallemma3}. We get from \eqref{maximallemma1} that
\begin{equation} \label{maximallemma4}
\begin{split}
\sum_{l=-\infty}^{+\infty}H_k^{\alpha}(|l|) & = H_k^{\alpha}(0)+\sum_{|l| = 1}^{[c2^{\alpha k}]}H_k^{\alpha}(|l|)+\sum_{|l| \ge [c2^{\alpha k}]+1}H_k^{\alpha}(|l|) \\ & \le 2^k+2\sum_{l = 1}^{[c2^{\alpha k}]}2^{\frac{k}2}l^{-\frac12}+2\sum_{l\ge [c2^{\alpha k}]+1}(1+l^2)^{-1} .
\end{split}
\end{equation}
Now, observe that 
\begin{displaymath} 
\sum_{l = 1}^{[c2^{\alpha k}]}l^{-\frac12} \le \int_0^{[c2^{\alpha k}]}|x|^{-\frac12}dx \lesssim 2^{\frac{\alpha k}2} ,
\end{displaymath}
which implies estimate \eqref{maximallemma3} recalling \eqref{maximallemma4} and the fact that $2^{\frac{(1+\alpha)k}2} \le 2^k$ since $0<\alpha<1$.
\end{proof}

\begin{proof}[Proof of Proposition \ref{maximalfunction}] The proof of Proposition \ref{maximalfunction} is exactly the same as the one of Theorem 2.7 in \cite{KPV2}. The difference in the regularity $s>\frac12$ instead of $s>\frac{1+\alpha}4$ comes from the fact that we use estimate \eqref{maximallemma3} in the last inequality of the estimate at the top of page 333 in \cite{KPV2}. 
\end{proof}

\begin{corollary} \label{maximalcoro} 
Assume that $0<\alpha<1$. Let $s>\frac12$, $\beta>\frac12$ and $T>0$. Then, we have that 
\begin{equation} \label{maximalcoro1}
 \Big(\sum_{j=-\infty}^{+\infty}\sup_{|t| \le T}\sup_{j \le x < j+1}|e^{tD^{\alpha}\partial_x}u_0(x)|^2 \Big)^{\frac12} \lesssim  (1+T)^{\beta}\|u_0\|_{H^s} \ ,
\end{equation}
for any $u_0 \in H^s(\mathbb R)$.
\end{corollary}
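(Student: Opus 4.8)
The plan is to reduce the estimate over the long time interval $[-T,T]$ to the unit-time estimate already established in Proposition \ref{maximalfunction}, by chopping the time interval into pieces of unit length and exploiting the group structure together with the unitarity of $e^{tD^{\alpha}\partial_x}$ on $H^s$. First I would cover $[-T,T]$ by roughly $1+T$ intervals $I_k=[t_k-\tfrac12,t_k+\tfrac12]$ of length one, so that the number of indices satisfies $\#\{k\}\lesssim 1+T$. For $t\in I_k$ I would write, using the group law,
\[
e^{tD^{\alpha}\partial_x}u_0=e^{(t-t_k)D^{\alpha}\partial_x}\big(e^{t_kD^{\alpha}\partial_x}u_0\big),
\]
and set $v_k:=e^{t_kD^{\alpha}\partial_x}u_0$. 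Since $t\in I_k$ forces $|t-t_k|\le\tfrac12\le 1$, Proposition \ref{maximalfunction} applied with initial datum $v_k$ gives
\[
\sum_{j}\sup_{t\in I_k}\sup_{j\le x<j+1}|e^{tD^{\alpha}\partial_x}u_0(x)|^2 \lesssim \|v_k\|_{H^s}^2.
\]
Because the symbol $e^{it_k|\xi|^{\alpha}\xi}$ has modulus one, $e^{t_kD^{\alpha}\partial_x}$ is unitary on $H^s$, so $\|v_k\|_{H^s}=\|u_0\|_{H^s}$ and the right-hand side is $\lesssim\|u_0\|_{H^s}^2$ uniformly in $k$.

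Next I would assemble the pieces. For each fixed $x$, the supremum over $|t|\le T$ equals the maximum over $k$ of the suprema over the $I_k$, hence is bounded by their sum; squaring, summing over $j$, and interchanging the two nonnegative sums yields
\[
\sum_{j}\sup_{|t|\le T}\sup_{j\le x<j+1}|e^{tD^{\alpha}\partial_x}u_0(x)|^2 \le \sum_k \sum_j \sup_{t\in I_k}\sup_{j\le x<j+1}|e^{tD^{\alpha}\partial_x}u_0(x)|^2 \lesssim \sum_k \|u_0\|_{H^s}^2.
\]
Using $\#\{k\}\lesssim 1+T$ and taking square roots produces the bound with the natural exponent $\tfrac12$, which in particular is $\lesssim (1+T)^{\beta}\|u_0\|_{H^s}$ for any $\beta>\tfrac12$ (indeed for any $\beta\ge\tfrac12$), giving \eqref{maximalcoro1}.

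I do not expect a serious obstacle here: the argument is essentially bookkeeping around the sharp $T=1$ estimate. The only points requiring care are verifying that each shifted time interval lies within the range $|t-t_k|\le 1$ where Proposition \ref{maximalfunction} is valid, justifying the interchange of the countable sum over $j$ with the finite sum over $k$ (legitimate since all terms are nonnegative), and counting the covering intervals by $1+T$ so that the estimate is non-degenerate for small $T$ as well as for large $T$. The gap between the sharp exponent $\tfrac12$ produced by this covering and the stated $\beta>\tfrac12$ simply leaves slack to absorb the harmless implicit constants coming from the overlap of the intervals.
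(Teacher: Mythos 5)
Your proof is correct, and it takes a genuinely different route from the paper's. You tile $[-T,T]$ by $O(1+T)$ unit intervals, use the group law $e^{tD^{\alpha}\partial_x}=e^{(t-t_k)D^{\alpha}\partial_x}e^{t_kD^{\alpha}\partial_x}$ together with the fact that $e^{t_kD^{\alpha}\partial_x}$ is unitary on $H^s$, apply Proposition \ref{maximalfunction} on each piece, and sum; all the interchanges are legitimate by nonnegativity, and you land on the exponent $\tfrac12$, which dominates the stated bound for every $\beta\ge\tfrac12$. The paper instead follows Corollary 2.8 of \cite{KPV2}: a scaling argument (rescaling $x\mapsto\lambda x$, $t\mapsto\lambda^{1+\alpha}t$ so as to compress $[-T,T]$ to unit length) yields the intermediate bound $T^{s/(1+\alpha)}\|u_0\|_{H^s}$ for $T>1$, and the condition $\beta>\tfrac12$ is then used to set $s_0=\beta(1+\alpha)>\tfrac12$ and lower the Sobolev index to $s_0$ when $s\ge s_0$, so that the exponent becomes exactly $\beta$. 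The two arguments buy slightly different things: yours is more elementary, needs only the unit-time estimate as a black box, and gives the uniform exponent $\tfrac12$ regardless of $s$ and $\alpha$; the paper's scaling route gives the exponent $s/(1+\alpha)$, which is actually smaller than $\tfrac12$ when $s$ is close to $\tfrac12$ (hence a sharper growth rate in $T$ in that regime), but it requires the homogeneity of the symbol $|\xi|^{\alpha}\xi$ and the extra index-lowering step. For the purposes of Proposition \ref{apriori}, where only some $\beta>\tfrac12$ is needed, either version suffices.
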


\begin{proof} We can always assume that $T > 1$, since the proof of estimate \eqref{maximalcoro1} is a direct consequence of estimate \eqref{maximalfunction1} in the case where $0< T \le 1$. Arguing exactly as in the proof of Corollary 2.8 in \cite{KPV2}, we obtain that 
\begin{equation} \label{maximalcoro2}
\Big(\sum_{j=-\infty}^{+\infty}\sup_{|t| \le T}\sup_{j \le x < j+1}|e^{tD^{\alpha}\partial_x}u_0(x)|^2 \Big)^{\frac12} 
\lesssim T^{\frac{s}{1+\alpha}}\|u_0\|_{H^s}, 
\end{equation}
for any $s>\frac12$. Now fix $\beta>\frac12$. Then, we have that $\beta (1+\alpha)=s_0>\frac12$. In the case where $s<s_0$, estimate \eqref{maximalcoro2} implies directly estimate \eqref{maximalcoro1}. On the other hand if $s_0 \le s$, we apply \eqref{maximalcoro2} with $s_0$, so that the left-hand side of \eqref{maximalcoro1} is bounded by $T^{\beta}\|u_0\|_{H^{s_0}} \le T^{\beta}\|u_0\|_{H^s}$, which also implies the result in this case. 
\end{proof}

\subsection{Energy estimates} 
In this subsection, we prove the energy estimates satisfied by solutions of \eqref{dispBurgers}.

\begin{proposition} \label{EnergyEstimates} 
Assume that $0<\alpha<1$ and $T>0$. Let $u \in C([0,T]:H^{\infty}(\mathbb R))$ be a smooth solution to \eqref{dispBurgers}. Then, 
\begin{equation} \label{EnergyEstimates1}
\|u(\cdot,t)\|_{L^2}=\|u_0\|_{L^2}, 
\end{equation}
for all $t \in [0,T]$. Moreover, if $s>0$ is given, we have 
\begin{equation} \label{EnergyEstimates2}
\|u\|_{L^{\infty}_TH^s_x}\lesssim \|u_0\|_{H^s}e^{c\|\partial_xu\|_{L^1_TL^{\infty}_x}}. 
\end{equation}

\end{proposition}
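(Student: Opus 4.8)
The plan is to run a standard energy argument on smooth solutions, for which all the manipulations below (integration by parts, differentiation under the integral, Plancherel) are justified since $u \in C([0,T]:H^{\infty}(\mathbb{R}))$ and hence decays at infinity together with all its derivatives. The one algebraic fact I would isolate first is that the linear operator $D^{\alpha}\partial_x$ is skew-adjoint on $L^2$: its Fourier symbol $i\xi|\xi|^{\alpha}$ is purely imaginary and odd, so for any real-valued $v \in H^{\infty}(\mathbb{R})$ one has $\int_{\mathbb{R}} v\, D^{\alpha}\partial_x v\, dx = 0$ by Plancherel (the integrand $\xi|\xi|^{\alpha}|\widehat{v}(\xi)|^2$ is odd). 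This is what makes the dispersive term contribute nothing to the energy balance, at every regularity level.

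For \eqref{EnergyEstimates1}, I would multiply \eqref{dispBurgers} by $u$ and integrate in $x$. The dispersive term drops by the skew-adjointness just noted, while the nonlinear term gives $\int_{\mathbb{R}} u^2 \partial_x u\, dx = \frac13 \int_{\mathbb{R}} \partial_x(u^3)\, dx = 0$. Hence $\frac{d}{dt}\|u(\cdot,t)\|_{L^2}^2 = 0$ and \eqref{EnergyEstimates1} follows.

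For \eqref{EnergyEstimates2}, I would apply $J^s$ to \eqref{dispBurgers}, pair against $J^s u$ in $L^2$, and integrate. Again the term $\int_{\mathbb{R}} D^{\alpha}\partial_x J^s u \cdot J^s u\, dx$ vanishes by skew-adjointness. The nonlinear contribution $\int_{\mathbb{R}} J^s(u\partial_x u)\, J^s u\, dx$ I would split as $\int_{\mathbb{R}} u \,\partial_x J^s u \cdot J^s u\, dx + \int_{\mathbb{R}} [J^s,u]\partial_x u \cdot J^s u\, dx$. The first integral equals $-\frac12 \int_{\mathbb{R}} \partial_x u\, (J^s u)^2\, dx$ after an integration by parts, and is therefore bounded by $\frac12\|\partial_x u\|_{L^{\infty}}\|J^s u\|_{L^2}^2$. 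For the commutator term I would invoke the Kato-Ponce estimate $\|[J^s,u]\partial_x u\|_{L^2}\lesssim \|\partial_x u\|_{L^{\infty}}\|J^{s-1}\partial_x u\|_{L^2}+\|J^s u\|_{L^2}\|\partial_x u\|_{L^{\infty}}$, and since $\|J^{s-1}\partial_x u\|_{L^2}\lesssim \|J^s u\|_{L^2}$, Cauchy-Schwarz yields a bound $\lesssim \|\partial_x u\|_{L^{\infty}}\|J^s u\|_{L^2}^2$. Combining the two pieces gives the differential inequality $\frac{d}{dt}\|J^s u(\cdot,t)\|_{L^2}^2 \le c\,\|\partial_x u(\cdot,t)\|_{L^{\infty}}\|J^s u(\cdot,t)\|_{L^2}^2$.

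Finally I would close by Gronwall's inequality: integrating yields $\|J^s u(\cdot,t)\|_{L^2}^2 \le \|u_0\|_{H^s}^2 \exp\big(c\int_0^t \|\partial_x u(\cdot,\tau)\|_{L^{\infty}}\,d\tau\big)$, and taking square roots and the supremum over $t\in[0,T]$ produces \eqref{EnergyEstimates2}. The only genuinely nontrivial input is the Kato-Ponce commutator estimate, which is exactly what allows the right-hand side to be controlled by $\|\partial_x u\|_{L^{\infty}}$ alone rather than by a higher norm of $u$; the rest is bookkeeping. I expect the main point to guard against is the correct application of Kato-Ponce at the endpoint of losing one derivative — namely that $\|J^{s-1}\partial_x u\|_{L^2}$ is genuinely controlled by $\|J^s u\|_{L^2}$, which requires $s>0$ — together with checking that all boundary terms in the integrations by parts vanish, both harmless for $H^{\infty}$ solutions.
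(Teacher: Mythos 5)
Your proof is correct and follows essentially the same route as the paper's: skew-adjointness of $D^{\alpha}\partial_x$ for \eqref{EnergyEstimates1}, then applying $J^s$, splitting the nonlinear term into a commutator handled by the Kato--Ponce estimate \eqref{Kato-Ponce1} and a transport term handled by integration by parts, and closing with Gronwall. The only difference is that you spell out the Fourier-side justification of skew-adjointness and the bound $\|J^{s-1}\partial_xu\|_{L^2}\lesssim\|J^su\|_{L^2}$, which the paper leaves implicit.
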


The proof of estimate \eqref{EnergyEstimates2} relies on the Kato-Ponce commutator estimate \cite{KP} (see also Lemma 2.2 in \cite{Po}).
\begin{lemma} \label{Kato-Ponce}
Let $s>0$, $p,\ p_2, \ p_3 \in (1,\infty)$ and $p_1, \ p_4 \in (1,\infty]$ be such that $\frac1p=\frac1{p_1}+\frac1{p_2}=\frac1{p_3}+\frac1{p_4}$ . Then, 
\begin{equation}  \label{Kato-Ponce1}
\|[J^s,f]g\|_{L^p} \lesssim \|\partial_xf\|_{L^{p_1}}\|J^{s-1}g\|_{L^{p_2}}+\|J^sf\|_{L^{p_3}}\|g\|_{L^{p_4}}.
\end{equation}
and 
\begin{equation}  \label{Kato-Ponce2}
\|J^s(fg)\|_{L^p} \lesssim \|f\|_{L^{p_1}}\|J^{s}g\|_{L^{p_2}}+\|J^sf\|_{L^{p_3}}\|g\|_{L^{p_4}}.
\end{equation}
\end{lemma}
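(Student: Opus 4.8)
The plan is to prove both inequalities by the classical paraproduct method: decompose each product into low--high, high--low and high--high (resonant) frequency interactions, estimate each model piece, and then reassemble the dyadic sums using the Littlewood--Paley square function theorem and the Fefferman--Stein vector-valued maximal inequality in $L^p$, $1<p<\infty$. Concretely, I would fix an inhomogeneous dyadic decomposition $\mathrm{Id}=\sum_N P_N$ with $P_N$ localizing at $|\xi|\sim N$, together with low-frequency cutoffs $P_{\ll N}$ and fattened projections $\widetilde P_N$, and write $fg=\Pi_1(f,g)+\Pi_2(f,g)+R(f,g)$, where $\Pi_1=\sum_N P_{\ll N}f\,P_Ng$ is the low--high paraproduct, $\Pi_2=\sum_N P_Nf\,P_{\ll N}g$ is its transpose, and $R=\sum_N P_Nf\,\widetilde P_Ng$ is the resonant part.

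For the fractional Leibniz rule \eqref{Kato-Ponce2} I would treat the three pieces separately. In $\Pi_1$ the output frequency is $\sim N$, so $J^s$ behaves like $N^s$ on each summand; combining the reverse square function estimate for the (almost orthogonal) pieces, Hölder's inequality with $\frac1p=\frac1{p_1}+\frac1{p_2}$, and the pointwise bound $|P_{\ll N}f|\lesssim \mathcal{M}f$ (the Hardy--Littlewood maximal function), one gets $\|J^s\Pi_1\|_{L^p}\lesssim \|\mathcal{M}f\|_{L^{p_1}}\big\|(\sum_N|J^sP_Ng|^2)^{1/2}\big\|_{L^{p_2}}\lesssim \|f\|_{L^{p_1}}\|J^sg\|_{L^{p_2}}$, which is the first term; by symmetry $\Pi_2$ yields the second term $\|J^sf\|_{L^{p_3}}\|g\|_{L^{p_4}}$. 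The resonant part $R$ is the delicate one: each summand $P_Nf\,\widetilde P_Ng$ has output frequency only $\lesssim N$, so $J^s$ may distribute over all output scales $L\lesssim N$; after putting the $s$ derivatives on one factor and summing over $N\ge L$, convergence holds precisely because $s>0$ (a Schur/geometric-series estimate), and the Fefferman--Stein inequality then gives $\|J^sR\|_{L^p}\lesssim \|J^sf\|_{L^{p_3}}\|g\|_{L^{p_4}}$.

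For the commutator estimate \eqref{Kato-Ponce1} I would write $[J^s,f]g=J^s(fg)-fJ^sg$ and use the same decomposition, exploiting the cancellation in the low--high region. On $\Pi_1$ the two terms carry the symbols $\langle\xi\rangle^s$ and $\langle\eta\rangle^s$, where the $f$-frequency $\xi-\eta$ satisfies $|\xi-\eta|\ll N$ and the $g$-frequency $\eta$ satisfies $|\eta|\sim N$; by the mean value theorem $\langle\xi\rangle^s-\langle\eta\rangle^s=(\xi-\eta)\,\partial_\xi\langle\cdot\rangle^s(\zeta)$ has size $\lesssim|\xi-\eta|\,N^{s-1}$, so that the factor $(\xi-\eta)$ is a derivative falling on $f$ and $N^{s-1}$ is $J^{s-1}$ acting on $g$. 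Recasting this difference as a smooth, uniformly bounded multiplier and invoking the Coifman--Meyer theorem (or explicit kernel bounds on the localized symbol) gives, after summation, $\|[J^s,P_{\ll N}f]P_Ng\|_{L^p}\lesssim\|\partial_xf\|_{L^{p_1}}\|J^{s-1}g\|_{L^{p_2}}$, the first term of \eqref{Kato-Ponce1}. On the high--low and resonant regions the correction $fJ^sg$ is lower order and $J^s(fg)$ is controlled exactly as in \eqref{Kato-Ponce2}, producing the second term $\|J^sf\|_{L^{p_3}}\|g\|_{L^{p_4}}$.

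The main obstacle I anticipate is twofold. First, the resonant interaction in \eqref{Kato-Ponce2} must be summed under the general Hölder pairs $\frac1p=\frac1{p_3}+\frac1{p_4}$: a naive triangle inequality fails, and one is forced to use $s>0$ together with the vector-valued maximal inequality to convert the spread-out output frequencies into a convergent series. Second, extracting the genuine derivative gain in the commutator requires controlling the multiplier $\langle\xi\rangle^s-\langle\eta\rangle^s$ rigorously via its smoothness, i.e.\ the Coifman--Meyer machinery, rather than the formal mean value computation. The endpoint cases $p_1=\infty$ or $p_4=\infty$ are absorbed into the same scheme since $\|\mathcal{M}f\|_{L^\infty}\lesssim\|f\|_{L^\infty}$; this recovers the classical Kato--Ponce estimate as stated in \cite{KP} and \cite{Po}.
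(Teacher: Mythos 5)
Your paraproduct argument is sound, but you should first be aware that the paper does not prove this lemma at all: estimates \eqref{Kato-Ponce1} and \eqref{Kato-Ponce2} are quoted directly from the literature, attributed to Kato and Ponce \cite{KP} and to Lemma 2.2 in \cite{Po}, so there is no in-paper proof to compare against. That said, your scheme is essentially the standard modern proof of the cited result, in the same Coifman--Meyer spirit as the original references: the low--high piece with the symbol-difference bound $\lvert\langle\xi\rangle^s-\langle\eta\rangle^s\rvert\lesssim\lvert\xi-\eta\rvert\,\langle\eta\rangle^{s-1}$ (valid since $\lvert\xi-\eta\rvert\ll\lvert\eta\rvert$ forces the intermediate point to have size $\sim\lvert\eta\rvert$) is exactly where the derivative gain $\|\partial_xf\|_{L^{p_1}}\|J^{s-1}g\|_{L^{p_2}}$ comes from, and the high--low and resonant pieces give the second term once one uses $s>0$ to absorb $J^s$ acting on the low-frequency factor at cost $\langle N\rangle^s$. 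Two details in your sketch deserve more care. First, in the resonant term the square function cannot be placed on the factor measured in $L^{p_4}$ when $p_4=\infty$; you must put it on the $\langle N\rangle^s P_Nf$ factor (legitimate since $p_3\in(1,\infty)$) and control the other factor pointwise by $\sup_N\lvert\widetilde P_Ng\rvert\lesssim\mathcal{M}g$, or argue by duality --- your closing remark that the endpoints are ``absorbed'' via $\|\mathcal{M}f\|_{L^\infty}\lesssim\|f\|_{L^\infty}$ covers only the easy half of this. Second, the Coifman--Meyer step requires verifying that the cut-off symbol $\bigl(\langle\xi\rangle^s-\langle\eta\rangle^s\bigr)/\bigl((\xi-\eta)\langle\eta\rangle^{s-1}\bigr)$, restricted to $\lvert\xi-\eta\rvert\ll\lvert\eta\rvert$, obeys the required derivative bounds uniformly; this is routine but is the actual content behind the formal mean value computation. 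With those points filled in, your argument is a complete, self-contained proof of the lemma --- which is more than the paper itself supplies, and is consistent with how the result is proved in \cite{KP}, \cite{Po} and \cite{KPV3}.
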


We also state the fractional Leibniz rule proved in \cite{KPV3} which is a refined version of estimate \eqref{Kato-Ponce2} in the case $0<s<1$ and will be needed in the next section.
\begin{lemma} \label{LeibnizRule} 
Let $\sigma =\sigma_1+\sigma_2 \in (0,1)$ with $\sigma_i \in (0,\gamma)$ and $p, \  p_1, \ p_2 \in (1,\infty)$ satisfy
$\frac1p=\frac1{p_1}+\frac1{p_2}$. Then, 
\begin{equation}  \label{LeibnizRule1}
\|D^{\sigma}(fg)-fD^{\sigma}g-gD^{\sigma}f\|_{L^p} \lesssim \|D^{\sigma_1}f\|_{L^{p_1}}\|D^{\sigma_2}g\|_{L^{p_2}}.
\end{equation}
Moreover, the case $\sigma_2=0$, $p_2=\infty$ is also allowed.
\end{lemma}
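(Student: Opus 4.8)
The plan is to reduce the whole estimate to a single pointwise bilinear identity and then to the classical square–function characterization of the spaces $D^{\sigma_i}L^{p_i}$. Since $0<\sigma<1$, I would start from the pointwise representation of the Riesz potential valid in this range,
\[
D^{\sigma}f(x)=c_{\sigma}\,\mathrm{p.v.}\int_{\R}\frac{f(x)-f(y)}{|x-y|^{1+\sigma}}\,dy,
\]
first for $f,g$ in the Schwartz class and then extending by density. Writing the same formula for $D^{\sigma}(fg)$, $fD^{\sigma}g$ and $gD^{\sigma}f$ and subtracting, the zeroth–order terms cancel exactly and the numerator collapses to $-(f(x)-f(y))(g(x)-g(y))$, leaving the clean identity
\[
D^{\sigma}(fg)-fD^{\sigma}g-gD^{\sigma}f=-c_{\sigma}\int_{\R}\frac{\big(f(x)-f(y)\big)\big(g(x)-g(y)\big)}{|x-y|^{1+\sigma}}\,dy,
\]
where the principal value is no longer needed, since the product of two differences gains enough vanishing at $y=x$ to make the integrand absolutely integrable.

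Next I would split the homogeneity of the kernel according to the two orders of differentiation. Writing $1+\sigma=(\tfrac12+\sigma_1)+(\tfrac12+\sigma_2)$ and applying the Cauchy–Schwarz inequality in the $y$ variable, the right–hand side is dominated pointwise by $c_{\sigma}\,\mathcal S_{\sigma_1}f(x)\,\mathcal S_{\sigma_2}g(x)$, where
\[
\mathcal S_{\tau}h(x)=\Big(\int_{\R}\frac{|h(x)-h(y)|^{2}}{|x-y|^{1+2\tau}}\,dy\Big)^{1/2}.
\]
I would then invoke the classical fact (Stein, Strichartz) that for $1<q<\infty$ and $0<\tau<1$ one has the equivalence $\|\mathcal S_{\tau}h\|_{L^{q}}\simeq \|D^{\tau}h\|_{L^{q}}$. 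Combining this with Hölder's inequality in the form $\frac1p=\frac1{p_1}+\frac1{p_2}$ gives
\[
\|D^{\sigma}(fg)-fD^{\sigma}g-gD^{\sigma}f\|_{L^{p}}\lesssim\|\mathcal S_{\sigma_1}f\|_{L^{p_1}}\|\mathcal S_{\sigma_2}g\|_{L^{p_2}},
\]
and the square–function equivalence then bounds the right-hand side by $\|D^{\sigma_1}f\|_{L^{p_1}}\|D^{\sigma_2}g\|_{L^{p_2}}$, which is exactly \eqref{LeibnizRule1}.

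The main obstacle is the square-function characterization $\|\mathcal S_{\tau}h\|_{L^q}\simeq\|D^{\tau}h\|_{L^q}$, and above all its behaviour at the endpoint $\sigma_2=0$, $p_2=\infty$ allowed in the statement. There the functional $\mathcal S_0 g(x)=(\int_{\R}|g(x)-g(y)|^2|x-y|^{-1}\,dy)^{1/2}$ is not controlled by $\|g\|_{L^{\infty}}$, so the Cauchy–Schwarz splitting degenerates and a separate argument is required. I expect to handle that case by a Littlewood–Paley paraproduct decomposition $fg=\Pi_f g+\Pi_g f+R(f,g)$, observing that in the high–low and low–high pieces the operator $D^{\sigma}$ falls essentially on the high–frequency factor, so that the two subtracted terms $fD^{\sigma}g$ and $gD^{\sigma}f$ cancel the leading contributions and leave only commutators that are summable over the dyadic scales after estimating the low–frequency factor in $L^{\infty}$ and using Bernstein's inequality; the diagonal term $R(f,g)$ is then treated directly. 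This endpoint bookkeeping, together with justifying the interchange of $D^{\sigma}$ with the pointwise formula, is where the real work lies, whereas the generic case is the short and transparent computation sketched above.
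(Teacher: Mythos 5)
Your pointwise identity for $D^{\sigma}(fg)-fD^{\sigma}g-gD^{\sigma}f$ and the Cauchy--Schwarz reduction to the square functions $\mathcal S_{\sigma_i}$ are correct, and this route is genuinely different from the one behind the paper's source for this lemma (the paper offers no proof and simply cites \cite{KPV3}, where \eqref{LeibnizRule1} is obtained by a Littlewood--Paley/bilinear multiplier argument). However, the step you call classical fails in part of the range the lemma claims. The Stein--Strichartz characterization gives $\|\mathcal S_{\tau}h\|_{L^{q}}\simeq\|D^{\tau}h\|_{L^{q}}$ only for $q>\frac{2n}{n+2\tau}$, i.e.\ $q>\frac{2}{1+2\tau}$ in one dimension, and below that threshold the direction you need, $\|\mathcal S_{\tau}h\|_{L^{q}}\lesssim\|D^{\tau}h\|_{L^{q}}+\|h\|_{L^{q}}$, is genuinely false. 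Indeed, take $h$ equal to $|x|^{-\beta}$ near the origin (times a smooth cutoff) with $\frac12\le\beta<\frac1q-\tau$, a nonempty range precisely when $q<\frac{2}{1+2\tau}$: then $h$ and $D^{\tau}h$ lie in $L^{q}$, yet for every $x$ in a neighbourhood of $1$ one has $\mathcal S_{\tau}h(x)^{2}\gtrsim\int_{|y|<\epsilon}|y|^{-2\beta}\,dy=\infty$. Since the lemma permits $\sigma_{i}$ arbitrarily small and $p_{i}$ arbitrarily close to $1$ (for instance $\sigma_{1}=\sigma_{2}=\frac14$, $p_{1}=\frac54<\frac43$, $p_{2}=6$), your chain of inequalities passes through a false estimate and does not prove the statement in its stated generality; it is sound only under the extra restriction $p_{i}>\frac{2}{1+2\sigma_{i}}$, which does hold whenever both $p_{i}\ge 2$.

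The second, and for this paper the more serious, issue is the endpoint $\sigma_{2}=0$, $p_{2}=\infty$. That is the only form of the lemma actually invoked in the proofs of Propositions \ref{apriori} and \ref{convergence} (and again in the fractional BBM section), and it is exactly the case your main computation cannot reach, as you acknowledge: with one factor measured only in $L^{\infty}$ the Cauchy--Schwarz splitting degenerates, and the alternative H\"older splitting produces $\int|f(x)-f(y)|\,|x-y|^{-1-\sigma}dy$, which is not controlled by $D^{\sigma}f$ in $L^{p}$. What you offer there is a paraproduct sketch, but carrying it out --- decomposing $fg$ into high--low, low--high and diagonal pieces, verifying that $fD^{\sigma}g$ and $gD^{\sigma}f$ cancel the leading paraproducts up to commutators summable over dyadic scales --- is essentially the entire content of the Kenig--Ponce--Vega commutator estimate, i.e.\ of the lemma in the only form this paper uses it. So the part of the argument that is actually needed is deferred rather than done; running the Littlewood--Paley argument uniformly for all the stated exponents would both supply the endpoint and repair the first gap.
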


\begin{proof}[Proof of Proposition \ref{EnergyEstimates}] We obtain identity \eqref{EnergyEstimates1} multiplying equation \eqref{dispBurgers} by $u$, integrating in space and using that the operator $D^{\alpha}\partial_x$ is skew-adjoint. 

To prove estimate \eqref{EnergyEstimates2}, we apply the operator $J^s$ to \eqref{dispBurgers}, multiply by $J^su$ and integrate in space, which gives
\begin{equation} \label{EnergyEstimate3}
\frac12\frac{d}{dt}\int_{\mathbb R}|J^su|^2dx=-\int_{\mathbb R}J^su[J^s,u]\partial_xudx-\int_{\mathbb R}J^su(J^s\partial_xu)udx.
\end{equation}
We use the commutator estimate \eqref{Kato-Ponce1} to treat the first term appearing on the right-hand side of \eqref{EnergyEstimate3} and integrate by part to handle the second one. It follows that 
\begin{equation} \label{EnergyEstimate4}
 \frac12\frac{d}{dt}\|J^su(\cdot,t)\|_{L^2}^2 \lesssim \|\partial_xu(\cdot,t)\|_{L^{\infty}_x}\|J^su(\cdot,t)\|_{L^2_x}^2.
\end{equation}
Therefore, we deduce estimate \eqref{Kato-Ponce1} applying Gronwall's inequality to \eqref{EnergyEstimate4}.
\end{proof}

\subsection{Local smoothing effect}

By using Theorem 4.1 in \cite{KPV1}, we see that the solutions of the linear equation \eqref{dispBurgerslinear} recover  $\alpha/2$ spatial  derivatives locally in space. 
\begin{proposition} \label{localsmoothinglinear}
Assume that $0<\alpha<1$. Then, we have that 
\begin{equation} \label{localsmoothinglinear1}
\|D^{\frac{\alpha}2}e^{tD^{\alpha}\partial_x}u_0\|_{L^{\infty}_xL^2_T} \lesssim \|u_0\|_{L^2},
\end{equation}
for any $u_0 \in L^2(\mathbb R)$.
\end{proposition}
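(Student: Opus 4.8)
The statement to prove is the local smoothing estimate
$$\|D^{\frac{\alpha}2}e^{tD^{\alpha}\partial_x}u_0\|_{L^{\infty}_xL^2_T} \lesssim \|u_0\|_{L^2}.$$

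This is the Kato local smoothing effect. The plan is to derive it as a direct application of the general local smoothing theorem of Kenig, Ponce, Vega (Theorem 4.1 in KPV1), which covers dispersive groups $e^{it\phi(D)}$ with symbol $\phi$ satisfying suitable growth and nondegeneracy conditions. Here the phase is $\phi(\xi)=|\xi|^{\alpha}\xi$, so $\phi'(\xi)=(\alpha+1)|\xi|^{\alpha}$, which is comparable to $|\xi|^{\alpha}$. The gain of $\alpha/2$ derivatives is exactly half the order of vanishing/growth built into $|\phi'(\xi)|^{1/2}\sim|\xi|^{\alpha/2}$, which matches the operator $D^{\alpha/2}$ on the left-hand side.

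Let me sketch the self-contained argument, which is essentially a stationary/localization computation. First I would write the solution via Fourier inversion,
$$D^{\frac{\alpha}2}e^{tD^{\alpha}\partial_x}u_0(x)=\frac1{2\pi}\int_{\mathbb R}e^{i(x\xi+t|\xi|^{\alpha}\xi)}|\xi|^{\frac{\alpha}2}\hat{u_0}(\xi)\,d\xi,$$
and the goal is to bound its $L^2_T$ norm uniformly in $x$, then take the supremum in $x$. The natural route is to change variables to the phase variable $\eta=\phi(\xi)=|\xi|^{\alpha}\xi$. Since $\phi$ is a strictly increasing $C^1$ diffeomorphism of $\mathbb R$ (odd, with $\phi'(\xi)=(\alpha+1)|\xi|^{\alpha}>0$ for $\xi\neq0$), one has $d\eta=(\alpha+1)|\xi|^{\alpha}d\xi$, so that
$$|\xi|^{\frac{\alpha}2}\,d\xi=\frac{|\xi|^{\frac{\alpha}2}}{(\alpha+1)|\xi|^{\alpha}}\,d\eta=\frac{1}{(\alpha+1)|\xi(\eta)|^{\frac{\alpha}2}}\,d\eta.$$
Under this substitution the integral becomes, up to constants, an inverse Fourier transform in $t$ of the function $g(\eta)=|\xi(\eta)|^{-\alpha/2}\hat{u_0}(\xi(\eta))e^{ix\xi(\eta)}$, so by Plancherel in the $t$-variable (extending the $L^2_T$ norm to $L^2_t$ on all of $\mathbb R$, which only increases it),
$$\|D^{\frac{\alpha}2}e^{tD^{\alpha}\partial_x}u_0\|_{L^2_t}^2\lesssim\int_{\mathbb R}|g(\eta)|^2\,d\eta=\int_{\mathbb R}|\xi(\eta)|^{-\alpha}|\hat{u_0}(\xi(\eta))|^2\,d\eta.$$
Changing variables back to $\xi$ gives $d\eta=(\alpha+1)|\xi|^{\alpha}d\xi$, so the $|\xi|^{-\alpha}$ weight cancels precisely against the Jacobian, leaving
$$\|D^{\frac{\alpha}2}e^{tD^{\alpha}\partial_x}u_0\|_{L^2_t}^2\lesssim\int_{\mathbb R}|\hat{u_0}(\xi)|^2\,d\xi=\|u_0\|_{L^2}^2.$$
Crucially this bound is independent of $x$ (the factor $e^{ix\xi(\eta)}$ has modulus one and drops out), so taking the supremum over $x$ yields the $L^^{\infty}_xL^2_T$ estimate.

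The main point to watch, and the step where care is genuinely needed, is the behavior of the substitution near $\xi=0$, where $\phi'(\xi)=(\alpha+1)|\xi|^{\alpha}$ degenerates (it vanishes since $\alpha>0$). The cancellation between the $|\xi|^{-\alpha}$ weight and the Jacobian is exact, so the low-frequency contribution is in fact harmless, but to make the Plancherel step rigorous I would first prove the estimate for Schwartz $u_0$, or split into dyadic frequency pieces $P_N u_0$ and treat the region $|\xi|\lesssim1$ separately; on any fixed dyadic shell $\phi$ is a smooth diffeomorphism with $|\phi'|\sim N^{\alpha}$ bounded away from zero, so the change of variables is unambiguous there, and one then sums the $L^2$-orthogonal pieces. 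Alternatively, one simply invokes Theorem 4.1 of KPV1 directly, verifying that $\phi(\xi)=|\xi|^{\alpha}\xi$ meets its hypotheses with gain $s=\alpha/2$; this is the route the phrasing of the proposition suggests, and it reduces the proof to checking the symbol conditions rather than redoing the oscillatory-integral analysis.
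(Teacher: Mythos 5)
Your proposal is correct and matches the paper, which proves this proposition simply by invoking Theorem 4.1 of Kenig--Ponce--Vega \cite{KPV1} for the group $e^{tD^{\alpha}\partial_x}$ with phase $\phi(\xi)=|\xi|^{\alpha}\xi$ and gain $|\phi'(\xi)|^{1/2}\sim|\xi|^{\alpha/2}$. The self-contained change-of-variables/Plancherel-in-$t$ argument you supply is precisely the proof of that theorem specialized to this symbol, including the correct handling of the degenerate Jacobian at $\xi=0$, so it adds detail rather than diverging from the paper's route.
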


However in our analysis, we will need a nonlinear version of Proposition \ref{localsmoothinglinear}, whose proof uses the original ideas of Kato \cite{Ka}. 
\begin{proposition} \label{localsmoothing}
Let $\chi$ denote a nondecreasing smooth function such that $\text{supp} \, \chi' \subset (-1,2)$ and $\chi_{|_{[0,1]}}=1$. For $j \in \mathbb Z$, we define $\chi_j(\cdot)=\chi(\cdot-j)$.
Let $u \in C([0,T]:H^{\infty}(\mathbb R))$ be a smooth solution of \eqref{dispBurgers} satisfying $u(\cdot,0)=u_0$ with $0<\alpha<1$. Assume also that $s \ge 0$ and $l>\frac12$. Then, 
\begin{equation} \label{localsmoothing1} 
\begin{split}
\Big(\int_0^T\int_{\mathbb R}\big(|D^{s+\frac{\alpha}2}&u(x,t)|^2+|D^{s+\frac{\alpha}2}\mathcal{H}u(x,t)|^2\big)\chi'_j(x)dxdt \Big)^{\frac12}\\& 
\lesssim   \big(1+T+\|\partial_xu\|_{L^1_TL^{\infty}_x} +T\|u\|_{L^{\infty}_TH^l_x}\big)^{\frac12}\|u\|_{L^{\infty}_TH^s_x} .
\end{split}
\end{equation} 
\end{proposition}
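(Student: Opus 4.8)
The plan is to imitate Kato's classical local smoothing computation: multiply the equation by a well-chosen multiplier built from the cutoffs $\chi_j$ and the fractional derivatives, integrate in space and time, and extract the local gain of $\alpha/2$ derivatives from the principal (dispersive) term. Concretely, I would first reduce to the case $s=0$ by commuting $D^s$ through the equation; applying $D^s$ to \eqref{dispBurgers} produces the linear term $-D^{\alpha}\partial_x(D^su)$ plus a nonlinear commutator $[D^s,u]\partial_xu + u\partial_x(D^su)$, and the commutator is controlled in $L^1_TL^2_x$ by $\|u\|_{L^\infty_TH^s_x}$ using Lemma \ref{Kato-Ponce} (or Lemma \ref{LeibnizRule} for the low-regularity pieces). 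So it suffices to bound the left-hand side of \eqref{localsmoothing1} by the energy norm of $v:=D^su$, which reduces the whole proposition to the $s=0$ smoothing identity applied to $v$ together with routine product estimates for the source terms.

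Next I would carry out the core Kato argument at the $s=0$ level. The essential tool is the commutator structure of $[D^\alpha\partial_x,\chi_j]$: testing the equation against the local multiplier, the symmetric part of the dispersion produces a positive quadratic form of the shape $\int_0^T\!\int (|D^{\alpha/2}u|^2+|D^{\alpha/2}\mathcal Hu|^2)\chi_j' \,dx\,dt$, which is exactly the quantity we wish to estimate. This is where I expect the \emph{main obstacle}: making the commutator $[D^\alpha\partial_x,\chi_j]$ rigorous and identifying its positive leading term requires the series expansions and remainder estimates for operators of the type $[D^\alpha\partial_x,\varphi]$ that the paper attributes to Ginibre--Velo \cite{GV}. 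One must show that after integration in $x$ the principal symbol of the commutator is, up to lower-order remainders, a positive-definite local quadratic form in $D^{\alpha/2}u$ and $D^{\alpha/2}\mathcal Hu$, and that the remainder terms are genuinely lower order so that they can be absorbed into the $L^2$-energy and the time integral.

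Then I would collect all the non-principal contributions and bound them by the right-hand side of \eqref{localsmoothing1}. The time-derivative term integrates to the energy $\|v(\cdot,T)\|_{L^2}^2+\|v(\cdot,0)\|_{L^2}^2\lesssim \|u\|_{L^\infty_TH^s_x}^2$, producing the factor $\|u\|_{L^\infty_TH^s_x}$. The nonlinear transport term $uu_x$ tested against the local multiplier is handled by integrating by parts, moving one derivative onto the cutoff or onto $u$; this yields contributions controlled either by $\|\partial_xu\|_{L^1_TL^\infty_x}\|u\|_{L^\infty_TH^s_x}^2$ (giving the $\|\partial_xu\|_{L^1_TL^\infty_x}$ factor) or, for the piece where no derivative can be removed, by $T\|u\|_{L^\infty_TH^l_x}\|u\|_{L^\infty_TH^s_x}^2$ via Sobolev embedding $H^l\hookrightarrow L^\infty$ for $l>\tfrac12$ (giving the $T\|u\|_{L^\infty_TH^l_x}$ factor). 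The additive $1+T$ in the prefactor absorbs the remainder terms from the commutator expansion and the contributions that are merely integrated over $[0,T]$. Finally I would take the square root and sum/supremize over $j$ as needed, noting that the implicit constants are uniform in $j$ because $\chi_j$ is a fixed translate of $\chi$; combining these bounds yields \eqref{localsmoothing1}.
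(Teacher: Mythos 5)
Your plan is correct and follows essentially the same route as the paper: apply $D^s$, test against $D^su\,\chi_j$, extract the positive local quadratic form $\tfrac{\alpha+1}{2}\int(|D^{\alpha/2}f|^2+|D^{\alpha/2}\mathcal Hf|^2)\chi_j'$ from the dispersive term via the Ginibre--Velo commutator expansion (the paper's Lemma \ref{localsmoothinglemma}), and bound the transport and commutator contributions by $\|\partial_xu\|_{L^1_TL^\infty_x}\|u\|_{L^\infty_TH^s_x}^2$ and $T\|u\|_{L^\infty_TH^l_x}\|u\|_{L^\infty_TH^s_x}^2$ using Kato--Ponce, integration by parts, and the embedding $H^l\hookrightarrow L^\infty$. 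You correctly isolate the one genuinely nontrivial ingredient (the commutator identity with $L^2$-bounded remainder) and attribute it to the right source, which is exactly how the paper handles it.
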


The proof of Proposition \ref{localsmoothing} is based on the following identity. 
\begin{lemma} \label{localsmoothinglemma}
Assume $0<\alpha<1$. Let $h \in C^{\infty}(\mathbb R)$ with $h'$ having compact support. Then, 
\begin{equation} \label{localsmoothinglemma1}
\int_{\mathbb R}f(D^{\alpha}\partial_xf)hdx=\frac{\alpha+1}2\int_{\mathbb R} \big( |D^{\frac{\alpha}2}f|^2+|D^{\frac{\alpha}2}\mathcal{H}f|^2\big)h'dx+\int_{\mathbb R}fR_{\alpha}(h)f  ,
\end{equation}
where $\|R_{\alpha}(h)f\|_{L^2} \le c_{\alpha}\|\mathcal{F}(D^{\alpha}h')\|_{L^1}\|f\|_{L^2}$, for any $f \in L^2(\mathbb R)$.
\end{lemma}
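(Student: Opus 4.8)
The plan is to turn the left-hand side into a commutator and then split that commutator into an explicit diagonal piece, which reproduces the quadratic form on the right, and a genuinely lower-order remainder. Since $D^{\alpha}\partial_x$ has the purely imaginary symbol $p(\xi)=i\xi|\xi|^{\alpha}=i\,\mathrm{sgn}(\xi)|\xi|^{\alpha+1}$, it is skew-adjoint, so integrating by parts once gives
\[
\int_{\mathbb R} f(D^{\alpha}\partial_x f)h\,dx=-\tfrac12\int_{\mathbb R} f\,[D^{\alpha}\partial_x,h]f\,dx .
\]
Passing to the Fourier side, the bilinear form on the right acts on $\hat f(-\xi)\,\hat f(\eta)$ with multiplier $-\tfrac{i}{2}\big(\phi(\xi)-\phi(\eta)\big)\hat h(\xi-\eta)$, where $\phi(\xi)=\xi|\xi|^{\alpha}$. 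I would record at the same time the factorization $D^{\alpha}\partial_x=D^{\alpha/2}\partial_x D^{\alpha/2}$, which is the cleanest way to see where $|D^{\alpha/2}f|^2$ comes from and, via $D^1=\mathcal H\partial_x$, where the companion term $|D^{\alpha/2}\mathcal H f|^2$ originates.

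Next I would identify the main term. Computing the Fourier representation of $\int(|D^{\alpha/2}f|^2+|D^{\alpha/2}\mathcal H f|^2)h'\,dx$, the sign factor coming from the Hilbert transform forces this quadratic form to be supported, in the $(\xi,\eta)$ variables of the commutator, on the same-sign region $\xi\eta>0$, with weight $(\alpha+1)\,i(\xi-\eta)\,|\xi|^{\alpha/2}|\eta|^{\alpha/2}\hat h(\xi-\eta)$ (the constant being pinned by $\phi'(\xi)=(\alpha+1)|\xi|^{\alpha}$ together with the symmetrization). The point is that in the opposite-sign region $\xi\eta<0$ the main term contributes nothing, so there the whole commutator kernel is already remainder; and on the diagonal $\xi=\eta$ of the same-sign region the weighted difference of the main term and the commutator cancels to high order. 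Thus $R_{\alpha}(h)$ is exactly the operator whose Fourier kernel $r(\xi,\eta)$ is the difference of the two multipliers.

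The remainder estimate is then a pointwise kernel bound. I would prove
\[
|r(\xi,\eta)|\lesssim |\xi-\eta|^{\alpha+1}\,|\hat h(\xi-\eta)|=\big|\mathcal F(D^{\alpha}h')(\xi-\eta)\big| ,
\]
in two cases. When $\xi,\eta$ have opposite signs, $r$ reduces to $\tfrac12|\phi(\xi)-\phi(\eta)|\,|\hat h|=\tfrac12(|\xi|^{\alpha+1}+|\eta|^{\alpha+1})|\hat h|$, and the superadditivity $a^{\alpha+1}+b^{\alpha+1}\le(a+b)^{\alpha+1}$ for the exponent $\alpha+1>1$ gives the bound, since $|\xi-\eta|=|\xi|+|\eta|$ there. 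When $\xi,\eta$ have the same sign, by oddness and homogeneity it suffices to bound $Q(t):=t^{\alpha+1}-1-(\alpha+1)(t-1)t^{\alpha/2}$ by $C|t-1|^{\alpha+1}$ on $t>0$; a Taylor expansion at $t=1$ shows the $(t-1)$ and $(t-1)^2$ terms cancel, while the limits as $t\to0^+$ and $t\to\infty$ are finite, so the ratio $Q(t)/|t-1|^{\alpha+1}$ is bounded. Finally, a kernel dominated by $g(\xi-\eta)$ with $g=|\mathcal F(D^{\alpha}h')|\in L^1$ defines, by Young's (or Schur's) inequality, an operator on $L^2$ of norm $\lesssim\|g\|_{L^1}=\|\mathcal F(D^{\alpha}h')\|_{L^1}$, which is precisely the claimed bound.

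The crux of the argument is the same-sign cancellation: the geometric-mean weight $|\xi|^{\alpha/2}|\eta|^{\alpha/2}$ appearing in the main term is exactly what removes both the $O(|\xi-\eta|)$ and the $O(|\xi-\eta|^2)$ contributions of $\phi(\xi)-\phi(\eta)$ near the diagonal, leaving only the integrable order $|\xi-\eta|^{\alpha+1}$; any other natural normalization (for instance replacing the geometric mean by $|\xi|^{\alpha}$) would leave an $O(|\xi-\eta|)$ term that $\|\mathcal F(D^{\alpha}h')\|_{L^1}$ cannot control. Establishing the \emph{uniform} bound on $Q$, rather than merely its behavior near $t=1$, is the one step that genuinely requires care, and the reduction to a single variable by homogeneity is what makes it tractable.
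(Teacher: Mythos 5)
Your route is genuinely different from the paper's: the paper does not prove the expansion itself but quotes it from Ginibre--Velo \cite{GV} (their formulas (21)--(23) and Proposition 1 give $[\mathcal{H}D^{\alpha+1},h]=P-\mathcal{H}P\mathcal{H}+R_{\alpha}(h)$ with $P=(\alpha+1)D^{\frac{\alpha}2}(h'D^{\frac{\alpha}2}\cdot)$ together with the $L^2$ bound on $R_{\alpha}(h)$), whereas you rebuild it by a direct Fourier-kernel analysis plus a Schur test. The skeleton is sound: the reduction to the commutator, the observation that $\int(|D^{\frac{\alpha}2}f|^2+|D^{\frac{\alpha}2}\mathcal{H}f|^2)h'\,dx$ lives on $\{\xi\eta>0\}$, the superadditivity bound on the opposite-sign region, the homogeneity reduction to one variable, and the bound $|Q(t)|\le C|t-1|^{\alpha+1}$ for your $Q$ are all correct.

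There is, however, a genuine gap: a factor-of-$(-2)$ mismatch between the two places where you pin down the constant of the main term, and with the lemma's normalization the crucial same-sign cancellation fails. You correctly find that the left-hand side has Hermitian kernel $K=-\tfrac{i}{2}\bigl(\phi(\xi)-\phi(\eta)\bigr)\hat h(\xi-\eta)$ and that the lemma's main term $\tfrac{\alpha+1}{2}\int(|D^{\frac{\alpha}2}f|^2+|D^{\frac{\alpha}2}\mathcal{H}f|^2)h'$ has kernel $M=(\alpha+1)i(\xi-\eta)|\xi|^{\alpha/2}|\eta|^{\alpha/2}\hat h(\xi-\eta)$ on $\{\xi\eta>0\}$. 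But then, normalizing to $\eta=1$, $\xi=t$, the difference is $K-M=-\tfrac{i}{2}\bigl[t^{\alpha+1}-1+2(\alpha+1)(t-1)t^{\alpha/2}\bigr]\hat h$, whose bracket equals $3(\alpha+1)(t-1)+O((t-1)^2)$ near $t=1$: the linear term does \emph{not} cancel, the kernel is not $O(|\xi-\eta|^{\alpha+1}|\hat h|)$, and the Schur step fails. Your $Q(t)=t^{\alpha+1}-1-(\alpha+1)(t-1)t^{\alpha/2}$ corresponds instead to $M=-\tfrac{i(\alpha+1)}{2}(\xi-\eta)|\xi|^{\alpha/2}|\eta|^{\alpha/2}\hat h$, i.e.\ to the main term $-\tfrac{\alpha+1}{4}\int(|D^{\frac{\alpha}2}f|^2+|D^{\frac{\alpha}2}\mathcal{H}f|^2)h'$. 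The exact computation at $\alpha=2$, namely $\int f(D^{2}\partial_xf)h=-\tfrac32\int (f')^2h'+\tfrac12\int f^2h'''$, confirms that $-\tfrac{\alpha+1}{4}$ is the only constant compatible with the claimed remainder bound, so the slip sits in the statement (as transcribed from \cite{GV}) rather than in your cancellation; but as written your argument asserts both normalizations and therefore does not close. You should flag the discrepancy and prove the corrected identity; note that only the modulus of the identity is used in the proof of Proposition \ref{localsmoothing}, so the sign and constant are immaterial to the application.
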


\begin{proof}
Plancherel's identity implies that 
\begin{equation} \label{localsmoothinglemma2}
2\int_{\mathbb R}f(D^{\alpha}\partial_xf)hdx=-\int_{\mathbb R}f[D^{\alpha}\partial_x,h]fdx
=\int_{\mathbb R}f[\mathcal{H}D^{\alpha+1},h]fdx ,
\end{equation}
since $D^1=\mathcal{H}\partial_x$. 

On the other hand, we obtain gathering formulas (21), (22), (23) and Proposition 1 in \cite{GV} with $\alpha=2\mu$ and $n=[\mu]=0$ that 
\begin{equation} \label{localsmoothinglemma3}
[\mathcal{H}D^{\alpha+1},h]f=Pf-\mathcal{H}P\mathcal{H}f+R_{\alpha}(h)f
\end{equation}
where 
\begin{displaymath} 
Pf=(\alpha+1)D^{\frac{\alpha}2}(h'D^{\frac{\alpha}2}f) \quad \text{and} \quad \|R_{\alpha}(h)f\|_{L^2}\le c_{\alpha}\|\mathcal{F}(D^{\alpha}h')\|_{L^1} \|f\|_{L^2}.
\end{displaymath}

Therefore, we deduce identity \eqref{localsmoothinglemma1} combining \eqref{localsmoothinglemma2} and \eqref{localsmoothinglemma3}.
\end{proof}

\begin{proof} [Proof of Proposition \ref{localsmoothing}] The proof of Proposition \ref{localsmoothing} follows the lines of the one of Lemma 5.1 in \cite{KPV2}. Apply $D^s$ to \eqref{dispBurgers}, multiply by $D^su\chi_j$ and integrate in space to get 
\begin{equation} \label{localsmoothing2}
\frac12\frac{d}{dt}\int_{\mathbb R}|D^su|^2\chi_jdx-\int_{\mathbb R}D^{s+\alpha}\partial_xuD^su\chi_jdx+\int_{\mathbb R}D^s(u\partial_xu)D^su\chi_jdx=0.
\end{equation}
We use estimate \eqref{Kato-Ponce1} and integration by parts to deal with the third term on the left-hand side of \eqref{localsmoothing2} 
\begin{equation} \label{localsmoothing3} 
\begin{split}
\int_{\mathbb R}D^s(u\partial_xu)D^su\chi_jdx&=-\frac12\int_{\mathbb R}(\partial_xu\chi_j+u\chi_j')|D^su|^2dx+\int_{\mathbb R}[D^s,u]\partial_xuD^su\chi_jdx \\ & \lesssim \big(\|\partial_xu\|_{L^{\infty}_x}+ \|u\|_{L^{\infty}_x}\big)\|u\|_{H^s}^2.
\end{split}
\end{equation}
Therefore, we deduce gathering \eqref{localsmoothinglemma1}, \eqref{localsmoothing2}, \eqref{localsmoothing3} and integrating in time that
\begin{displaymath} 
\begin{split}
\int_0^T\int_{\mathbb R}\big(|D^{s+\frac{\alpha}2}u(x,t)|^2&+|D^{s+\frac{\alpha}2}\mathcal{H}u(x,t)|^2\big)\chi'_j(x)dxdt  \\ & \lesssim \big(1+T+\|\partial_xu\|_{L^1_TL^{\infty}_x} +T\|u\|_{L^{\infty}_{T,x}}\big)\|u\|_{L^{\infty}_TH^s_x}^2,
\end{split}
\end{displaymath}
which implies estimate \eqref{localsmoothing1} by using the Sobolev embedding. 
\end{proof}


The starting point for the proof of Theorem \ref{maintheo} is a well-posedness for smooth solutions obtained in \cite{S} (note that we also need to use the Bona-Smith method \cite{BS} to obtain the continuity of the flow).
\begin{theorem} \label{smoothsol}
Let $0<\alpha<1$ and $s>\frac32$. For any $u_0 \in H^s(\mathbb R)$, there exists a positive time $T=T(\|u_0\|_{H^s})$ and a unique solution to \eqref{dispBurgers} $u \in C([0,T]:H^s(\mathbb R))$ satisfying $u(\cdot,0)=u_0$. Moreover, the map $:u_0 \in H^s(\mathbb R) \mapsto u \in C([0,T]:H^s(\mathbb R))$ is continuous.
\end{theorem}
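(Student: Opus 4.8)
The plan is to follow the classical Kato quasilinear scheme, since—as recalled in the introduction—equation \eqref{dispBurgers} cannot be solved by Picard iteration, so the contraction principle is unavailable. The \emph{a priori} estimate is essentially already in hand: for $s>\frac32$ the Sobolev embedding $H^{s-1}(\R)\hookrightarrow L^\infty(\R)$ gives $\|\partial_xu\|_{L^\infty_x}\lesssim\|u\|_{H^s}$, so \eqref{EnergyEstimate4} yields the differential inequality $\frac{d}{dt}\|u\|_{H^s}^2\lesssim\|u\|_{H^s}^3$. Integrating this gives a uniform existence time $T=T(\|u_0\|_{H^s})\sim\|u_0\|_{H^s}^{-1}$, which is a nonincreasing function of its argument, together with the bound $\|u\|_{L^\infty_TH^s_x}\lesssim\|u_0\|_{H^s}$.

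First I would construct solutions by regularization. Introduce a parabolic regularization $\partial_tu^\eps-D^\alpha\partial_xu^\eps+u^\eps\partial_xu^\eps=\eps\partial_x^2u^\eps$ (or, equivalently, a frequency truncation of the nonlinearity); the regularized problem is semilinear and solvable on a short interval by the standard fixed point argument in $C([0,T_\eps];H^s)$. The energy estimate \eqref{EnergyEstimate4} holds uniformly in $\eps$, since the viscous term only dissipates energy, so the $u^\eps$ share a common existence time $T$ and a uniform bound in $L^\infty_TH^s_x$. Using the equation to bound $\partial_tu^\eps$ in a lower-order space together with the Aubin--Lions--Simon compactness lemma, I would extract a subsequence converging to a limit $u\in L^\infty_TH^s_x$ solving \eqref{dispBurgers}, with $u\in C_w([0,T];H^s)$ a priori only weakly continuous in time.

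Uniqueness (and $L^2$-stability) follows from an energy estimate at the $L^2$-level on the difference $w=u_1-u_2$ of two solutions, which solves $\partial_tw-D^\alpha\partial_xw+u_1\partial_xw+w\partial_xu_2=0$. Multiplying by $w$ and integrating, the dispersive term drops out because $D^\alpha\partial_x$ is skew-adjoint, the term $\int u_1\partial_xw\,w$ integrates by parts to $-\frac12\int\partial_xu_1\,w^2$, and the remaining term is $\int\partial_xu_2\,w^2$; hence $\frac{d}{dt}\|w\|_{L^2}^2\lesssim\big(\|\partial_xu_1\|_{L^\infty_x}+\|\partial_xu_2\|_{L^\infty_x}\big)\|w\|_{L^2}^2$, and Gronwall gives $w\equiv0$.

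The hard part is upgrading $C_w([0,T];H^s)$ to strong continuity in time and, above all, proving that the flow map $u_0\mapsto u$ is continuous: because Picard iteration is unavailable one cannot read off Lipschitz dependence, and the $L^2$-stability estimate above loses a full derivative, so it does not control the $H^s$-norm of differences. Here I would run the Bona--Smith argument \cite{BS}: mollify the data, $u_0^\delta=S_\delta u_0$, solve for $u^\delta$ on a common interval, and combine two facts—the low-norm difference $\|u^\delta-u\|_{L^2}$ is controlled by $\|u_0^\delta-u_0\|_{L^2}\to0$ through the stability estimate, while the high frequencies are tamed because $\|u_0^\delta\|_{H^{s+\sigma}}\lesssim\delta^{-\sigma}\|u_0\|_{H^s}$ and $\|u_0^\delta-u_0\|_{H^s}\to0$ at a quantified rate. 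Interpolating the two, and exploiting continuity of $t\mapsto\|u(t)\|_{H^s}$ (read off from the same energy identity: $\|u(t)\|_{H^s}\to\|u_0\|_{H^s}$ as $t\to0^+$, which together with weak continuity forces strong continuity), gives $u^\delta\to u$ in $C([0,T];H^s)$ and, applying the scheme to two nearby data, continuity of the flow map. This reconciliation of the derivative loss in the difference estimate with top-order continuity is where the real work lies.
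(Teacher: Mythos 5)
Your proposal is correct and follows essentially the route the paper itself relies on: the paper does not reprove Theorem \ref{smoothsol} but cites \cite{S} for the construction (regularization, uniform energy estimates via \eqref{EnergyEstimate4} with $\|\partial_xu\|_{L^\infty}\lesssim\|u\|_{H^s}$ for $s>\frac32$, and compactness) together with the Bona--Smith method \cite{BS} for strong continuity in time and continuity of the flow map, exactly as you describe. Your uniqueness argument also coincides with the $L^2$-difference estimate the paper carries out later in the uniqueness subsection.
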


\subsection{\textit{A priori} estimates for smooth solutions} 

\begin{proposition} \label{apriori}
Assume $0<\alpha<1$ and $s > \frac32-\frac{3\alpha}8$. For any $M>0$, there exists a positive time $\widetilde{T}=\widetilde{T}(M)$ such that for any initial data $u_0 \in H^{\infty}(\mathbb R)$ satisfying $\|u_0\|_{H^s} \le M$, the solution $u$ obtained in Theorem \ref{smoothsol} is defined on $[0,\tilde{T}]$ and satisfies
\begin{equation} \label{apriori1}
\Lambda_T^s(u) \le C_s(\widetilde{T})\|u_0\|_{H^s},
\end{equation}
for all $T \in (0,\widetilde{T}]$, where 
\begin{equation} \label{apriori2}
\Lambda_T^s(u):=\max\Big\{\|u\|_{L^{\infty}_{T}H^s_x}, \|\partial_xu\|_{L^2_{T}L^{\infty}_x}, (1+T)^{-\rho} \Big(\sum_{j=-\infty}^{+\infty}\|u\|_{L^{\infty}([j,j+1)\times[0,T])}^2 \Big)^{\frac12} \Big\} ,
\end{equation}
$\rho>\frac12$ and $C_s(\widetilde{T})$ is a positive constant depending only on $s$ and $\widetilde{T}$. 
\end{proposition}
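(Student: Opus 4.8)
The plan is to run a bootstrap (continuity) argument on the functional $\Lambda_T^s(u)$, using that for smooth data Theorem \ref{smoothsol} furnishes a local solution, that $T\mapsto \Lambda_T^s(u)$ is continuous, and that $\Lambda_T^s(u)\to \|u_0\|_{H^s}$ as $T\to 0$. I would fix $M\ge \|u_0\|_{H^s}$, assume on some $[0,T]$ the a priori bound $\Lambda_T^s(u)\le 2C_s\|u_0\|_{H^s}=:R$, and show that for $T$ below a threshold $\widetilde T(M)$ (nonincreasing in $M$) this improves to $\Lambda_T^s(u)\le C_s\|u_0\|_{H^s}$. The improved bound closes the bootstrap and, via the energy control, rules out blow-up (since $\|\partial_xu\|_{L^1_TL^\infty_x}\le T^{1/2}\|\partial_xu\|_{L^2_TL^\infty_x}<\infty$), so the solution extends to all of $[0,\widetilde T]$, giving \eqref{apriori1}.

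The three constituents of $\Lambda_T^s$ are handled in turn. For $\|u\|_{L^\infty_TH^s_x}$, I invoke the energy estimate \eqref{EnergyEstimates2} together with $\|\partial_xu\|_{L^1_TL^\infty_x}\le T^{1/2}\Lambda_T^s(u)\le T^{1/2}R$, so $\|u\|_{L^\infty_TH^s_x}\lesssim \|u_0\|_{H^s}e^{cT^{1/2}R}$, which is close to $\|u_0\|_{H^s}$ once $T$ is small. For the maximal-function component I write $u$ through Duhamel's formula, apply Corollary \ref{maximalcoro} (legitimate since $s>s(\alpha)>\tfrac12$) to the linear flow $e^{tD^\alpha\partial_x}u_0$, and treat the inhomogeneous integral by Minkowski's inequality combined with \eqref{maximalcoro1}; the linear part yields $\lesssim(1+T)^{\beta-\rho}\|u_0\|_{H^s}$ and the nonlinear part a contribution carrying a positive power of $T$.

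The heart of the matter is $\|\partial_xu\|_{L^2_TL^\infty_x}$. I apply the refined Strichartz estimate \eqref{refinedStrichartz1} with the optimal choice $\delta=1-\tfrac\alpha2$, for which the two Bessel exponents become $s(\alpha)+\theta$ and $\tfrac12+\tfrac\alpha8+\theta$; the first term is then $\lesssim T^{\kappa_1}\|u\|_{L^\infty_TH^s_x}$ provided $\theta<s-s(\alpha)$. It remains to bound $\|J^{\frac12+\frac\alpha8+\theta}(u\partial_xu)\|_{L^2_{T,x}}$. Writing $b=\tfrac12+\tfrac\alpha8+\theta$ and using \eqref{Kato-Ponce1}, the commutator piece $[J^b,u]\partial_xu$ is controlled by $\|\partial_xu\|_{L^2_TL^\infty_x}\|u\|_{L^\infty_TH^s_x}$, while the leading piece $u\,J^b\partial_xu$ is treated by localizing in space on the intervals $[j,j+1)$: I pull out $\sup_{[j,j+1)\times[0,T]}|u|$, sum in $\ell^2_j$ to produce exactly the maximal-function component of $\Lambda_T^s$, and estimate the remaining local $L^2_{T,x}$ norm of $J^b\partial_xu$ by the local smoothing estimate \eqref{localsmoothing1}. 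This is precisely where the $\tfrac\alpha2$ gain of the smoothing matches the order $b+1=\tfrac32+\tfrac\alpha8+\theta\le s+\tfrac\alpha2$, forcing the threshold $s(\alpha)$; the Hilbert-transform term in \eqref{localsmoothing1} is what allows $\partial_x=-\mathcal H D^1$ to be absorbed.

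Collecting the three estimates yields a closed system in which every genuinely nonlinear term carries a positive power of $T$ and at least a quadratic dependence on $\Lambda_T^s(u)\le R$; choosing $\widetilde T(M)$ small enough makes each such term a small fraction of $\|u_0\|_{H^s}$, delivering the improved bound. I expect the main obstacle to be the nonlinear maximal-function term controlling the third component of $\Lambda_T^s$: reconciling its apparent need for more than $\tfrac12$ \emph{global} derivatives on $u\partial_xu$ with the budget $s>s(\alpha)$ requires feeding the local smoothing gain into the Duhamel estimate rather than applying Corollary \ref{maximalcoro} crudely. It is this balance, together with the exponent matching in the Strichartz step, that pins down $s(\alpha)=\tfrac32-\tfrac{3\alpha}8$.
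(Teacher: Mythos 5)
Your proposal follows essentially the same route as the paper: energy estimate for the $H^s$ component, the refined Strichartz estimate with $\delta=1-\tfrac{\alpha}{2}$ for $\|\partial_xu\|_{L^2_TL^\infty_x}$, the decomposition of $J^{s-1+\frac{\alpha}{2}}(u\partial_xu)$ into a commutator/remainder piece plus the leading term $u\,D^{s-1+\frac{\alpha}{2}}\partial_xu$ handled by localizing on unit intervals and pairing the maximal-function component with the nonlinear local smoothing (via $\partial_x=-\mathcal{H}D^1$), and Duhamel plus Corollary \ref{maximalcoro} with $\tfrac12+\widetilde\theta\le s-1+\tfrac{\alpha}{2}$ for the maximal-function component. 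The only (immaterial) differences are your use of the Kato--Ponce commutator in place of the fractional Leibniz rule for the product term, and a standard bootstrap in place of the paper's explicit choice of $\widetilde{T}$ in \eqref{apriori10}; both close the argument identically.
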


\begin{proof} Fix $0<\alpha<1$, $s(\alpha)=\frac32-\frac{3\alpha}8<s\le \frac32$ and set $\theta=s-s(\alpha)>0$. Let $u_0 \in H^{\infty}(\mathbb R)$ and $u \in C([0,T^{\star}): H^{\infty}(\mathbb R))$ be the corresponding solution of \eqref{dispBurgers} obtained from Theorem \ref{smoothsol} and defined on its maximal interval of existence $[0,T^{\star})$.

We want to obtain an \textit{a priori} estimate on the quantity $\Lambda_T^s(u)$ defined in \eqref{apriori2}. For $0<T<T^{\star}$, let us define 
\begin{displaymath} 
\lambda_T^s(u)=\|u\|_{L^{\infty}_TH^s_x}, \quad \gamma_T^s(u)=\|\partial_xu\|_{L^2_{T}L^{\infty}_x}
\end{displaymath}
and
\begin{displaymath}
 \mu_T^s(u)= (1+T)^{-\beta}\Big(\sum_{j=-\infty}^{+\infty}\|u\|_{L^{\infty}([j,j+1)\times[0,T])}^2 \Big)^{\frac12}.
\end{displaymath}
First, we rewrite the energy estimate \eqref{EnergyEstimates2} using the above notations as 
\begin{equation} \label{apriori3}
\lambda_T^s(u) \lesssim \|u_0\|_{H^s}e^{cT^{\frac12}\gamma_T^s(u)} .
\end{equation}

To handle $\gamma_T^s(u)$, we use estimate \eqref{refinedStrichartz} with $\delta=1-\frac{\alpha}2$ and $\theta>0$ defined as above and deduce that
\begin{equation} \label{apriori4} 
\begin{split}
\gamma_T^s(u) & \lesssim T^{\kappa_1}\|u\|_{L^{\infty}_TH^s_x}+T^{\kappa_2}\|J^{s-1+\frac{\alpha}2}(u\partial_xu)\|_{L^2_{T,x}} \\ &
\lesssim T^{\kappa_1}\|u\|_{L^{\infty}_TH^s_x}+T^{\kappa_2}\big(\|u\|_{L^{\infty}_TL^2_x}\|\partial_xu\|_{L^2_TL^{\infty}_x}+\|D^{s-1+\frac{\alpha}2}(u\partial_xu)\|_{L^2_{T,x}} \big) ,
\end{split}
\end{equation}
where $\kappa_1$ and $\kappa_2$ are two positive number (lesser than $\frac12$) given by Proposition \ref{refinedStrichartz}. The fractional Leibniz rule stated in Lemma \ref{LeibnizRule} gives that
\begin{equation} \label{apriori5}
\begin{split}
\|D^{s-1+\frac{\alpha}2}(u\partial_xu)\|_{L^2_{T,x}} & \lesssim 
\|uD^{s-1+\frac{\alpha}2}\partial_xu\|_{L^2_{T,x}}+\big\|\|\partial_xu\|_{L^{\infty}_x}\|D^{s-1+\alpha/2}u\|_{L^2_x}\big\|_{L^2_T} \\ & 
\lesssim 
\|uD^{s-1+\frac{\alpha}2}\partial_xu\|_{L^2_{T,x}}+\|\partial_xu\|_{L^2_TL^{\infty}_x}\|u\|_{L^{\infty}_TH^s_x} .
\end{split}
\end{equation}
Now, we deduce from estimate  \eqref{localsmoothing1} that 
\begin{equation} \label{apriori6}
\begin{split}
\|u&D^{s-1+\frac{\alpha}2}\partial_xu\|_{L^2_{T,x}} \\ &=\Big(\sum_{j=-\infty}^{+\infty}\|uD^{s-1+\frac{\alpha}2}\partial_xu\|_{L^2([j,j+1)\times[0,T])}^2 \Big)^{\frac12} \\& 
\le \Big(\sum_{j=-\infty}^{+\infty}\|u\|_{L^{\infty}([j,j+1)\times[0,T])}^2 \Big)^{\frac12}
\sup_{j\in \mathbb Z}\|D^{s+\frac{\alpha}2}\mathcal{H}u\|_{L^2([j,j+1)\times[0,T])} \\ & 
\lesssim (1+T)^{\beta}\mu_T^s(u)\big(1+T+\|\partial_xu\|_{L^1_TL^{\infty}_x} +T\|u\|_{L^{\infty}_TH^s_x}\big)^{\frac12}\|u\|_{L^{\infty}_TH^s_x}. 
\end{split}
\end{equation}
Therefore, we conclude gathering \eqref{apriori4}--\eqref{apriori6} and using estimate \eqref{apriori3} that 
\begin{equation} \label{apriori7}
\begin{split}
&\gamma_T^s(u) \lesssim  \|u_0\|_{H^s}e^{cT^{\frac12}\gamma_T^s(u)} \\ & \times\Big\{ T^{\kappa_1}+T^{\kappa_2}\gamma_T^s(u)+T^{\kappa_2}
\mu_T^s(u)(1+T)^{\beta}\big(1+T(1+\lambda_T^s(u))+T^{\frac12}\gamma_T^s(u)\big)^{\frac12} \Big\}.
\end{split}
\end{equation}

Using that $u$ solves the integral equation 
\begin{displaymath} 
u(t)=e^{tD^{\alpha}\partial_x}u_0-\int_0^te^{(t-t')D^{\alpha}\partial_x}\big(u\partial_xu \big)(t')dt',
\end{displaymath}
we deduce from estimate \eqref{maximalcoro1} that 
\begin{equation} \label{apriori8}
\mu_T^s(u) \lesssim \|u_0\|_{H^{\frac12+\widetilde{\theta}}}+T^{\frac12}\|J^{\frac12+\widetilde{\theta}}(u\partial_xu)\|_{L^2_{T,x}},
\end{equation}
for any $\widetilde{\theta}>0$. Now, we choose $0<\widetilde{\theta} \le \frac{\alpha}8$ , so that $\frac12+\widetilde{\theta} \le s-1+\frac{\alpha}2$. Thus, we have that
\begin{displaymath}
\|J^{\frac12+\widetilde{\theta}}(u\partial_xu)\|_{L^2_{T,x}} \lesssim \|J^{s-1+\frac{\alpha}2}(u\partial_xu)\|_{L^2_{T,x}} .
\end{displaymath}
It follows then from \eqref{apriori8} and arguing as in \eqref{apriori4}--\eqref{apriori6} that
\begin{equation} \label{apriori9}
\begin{split}
&\mu_T^s(u) \lesssim  \|u_0\|_{H^s}e^{cT^{\frac12}\gamma_T^s(u)} \\ & \times\Big\{ 1+T^{\frac12}\gamma_T^s(u)+T^{\frac12}
\mu_T^s(u)(1+T)^{\beta}\big(1+T(1+\lambda_T^s(u))+T^{\frac12}\gamma_T^s(u)\big)^{\frac12} \Big\}.
\end{split}
\end{equation}

Now, observe that $T^{\kappa_2}\gamma_T^s(u)$, $T^{\frac12}\gamma_T^s(u)$, $T^{\kappa_2}\mu_T^s(u)$, $T^{\frac12}\mu_T^s(u)$ and $T\lambda_T^s(u)$ are nondecreasing functions of $T$ which tend to $0$ when $T$ tends to $0$. We define $\widetilde{T}$ such that 
\begin{equation} \label{apriori10} 
\max \big\{\widetilde{T}^{\kappa_2}\gamma_{\widetilde{T}}^s(u),  \widetilde{T}^{\frac12}\gamma_{\widetilde{T}}^s(u), \widetilde{T}^{\kappa_2}\mu_{\widetilde{T}}^s(u), \widetilde{T}^{\frac12}\mu_{\widetilde{T}}^s(u), \widetilde{T}\lambda_{\widetilde{T}}^s(u)\big\} =\frac12.
\end{equation}
Moreover, we can always assume that $\widetilde{T} < T^{\star}$ by choosing (if necessary) a constant smaller than $1/2$ on the right-hand side of \eqref{apriori10}. Therefore, we deduce gathering \eqref{apriori3}, \eqref{apriori7}, \eqref{apriori9} and \eqref{apriori10} that \eqref{apriori1} holds with some positive constant $C_s(\widetilde{T})$ (which can be chosen greater than $1$). 

Finally, we check that $\widetilde{T} \ge c(M)$ where $M$ is positive constant such that $\|u_0\|_{H^s} \le M$. Indeed, since \eqref{apriori10} holds true, we know that one of the terms appearing on the left-hand side of \eqref{apriori10} is equal to $\frac12$. Without loss of generality, we can assume that $\widetilde{T}^{\kappa_2}\gamma_{\widetilde{T}}^s(u)=\frac12$. This combined with \eqref{apriori1} implies that 
\begin{displaymath} 
\frac12 \le \widetilde{T}^{\kappa_2}C_s(\widetilde{T})\|u_0\|_{H^s} \le \widetilde{T}^{\kappa_2}C_s(\widetilde{T})M,
\end{displaymath}
which concludes the proof of Proposition \ref{apriori}.
\end{proof}

\begin{remark} In the $L^2$ subcritical case $\alpha>\frac12$, one can take advantage of the scaling invariance of equation \eqref{dispBurgers} to assume that the initial data is small in $H^s(\mathbb R)$. The arguments of Theorem  1.1 in \cite{KK} could be used to give an easier proof of Proposition \ref{apriori} in this case. 
\end{remark}

\subsection{Uniqueness and $L^2$-Lipschitz bound of the flow} 
Let $u_1$ and $u_2$ be two solutions of \eqref{dispBurgers} in the class \eqref{maintheo1} for some positive $T$, with respective initial data $u_1(\cdot,0)=\varphi_1$ and $u_2(\cdot,0)=\varphi_2$. We define the positive number $K$ by 
\begin{equation} \label{uniqueness1}
K=\max \big\{\|\partial_xu_1\|_{L^1_TL^{\infty}_x}, \|\partial_xu_2\|_{L^1_TL^{\infty}_x} \big\}.
\end{equation}
 
 We set $v=u_1-u_2$. Then $v$ satisfies 
 \begin{equation} \label{uniqueness2} 
 \partial_tv-D^{\alpha}\partial_xv+v\partial_xu_1+u_2\partial_xv=0,
 \end{equation}
with initial data $v(\cdot,0)=\varphi_1-\varphi_2$. We multiply $\eqref{uniqueness2}$ by $v$, integrate in space and integrate by parts to deduce that 
\begin{displaymath} 
\frac{d}{dt}\|v\|_{L^2}^2 \lesssim \big(\|\partial_xu_1\|_{L^{\infty}_x}+\|\partial_xu_2\|_{L^{\infty}_x} \big)\|v(\cdot,t)\|_{L^2}^2, 
\end{displaymath}
for all $t \in [0,T]$.  It follows then from Gronwall's inequality that 
\begin{equation} \label{uniqueness3}
\|v(\cdot,t)\|_{L^2}=\|u_1(\cdot,t)-u_2(\cdot,t)\|_{L^2} \lesssim e^{cK}\|\varphi_1-\varphi_2\|_{L^2},
\end{equation}
for all $t \in [0,T]$. 

Estimate \eqref{uniqueness3} provides the $L^2$-Lipschitz bound of the flow as well as the uniqueness result of Theorem \ref{maintheo} by taking $\varphi_1=\varphi_2$.

\subsection{Existence} 
Assume that $0<\alpha<1$ and $s>\frac32-\frac{3\alpha}8$. Fix an initial datum $u_0 \in H^s(\mathbb R)$. 

We will use the Bona-Smith argument \cite{BS}.  Let $\rho \in \mathcal{S}(\mathbb R)$, $\int\rho
dx=1$, and $\int x^k\rho(x)dx=0, \ k \in \mathbb Z_{+}$, $0 \le k \le [s]+1$. For any $\epsilon>0$, define $\rho_{\epsilon}(x)=\epsilon^{-1}\rho(\epsilon^{-1}x)$. The following lemma, whose proof can be found in \cite{BS}
(see also Proposition 2.1 in \cite{KP2}), gathers the properties of the smoothing operators which will be used in this
subsection.
\begin{lemma} \label{lemma existence1}
Let $s \ge 0$, $\phi \in H^s(\mathbb R)$ and for any $\epsilon>0$,
$\phi_{\epsilon}=\rho_{\epsilon} \ast \phi$. Then,
\begin{equation} \label{lemma existence1.1}
\|\phi_{\epsilon}\|_{H^{s+\nu}} \lesssim \epsilon^{-\nu}
\|\phi\|_{H^s}, \quad \forall \nu \ge0,
\end{equation}
and
\begin{equation} \label{lemma existence1.2}
\|\phi-\phi_{\epsilon}\|_{H^{s-\beta}} \underset{\epsilon
\rightarrow 0}{=} o(\epsilon^{\beta}), \quad \forall \beta \in
[0,s].
\end{equation}
\end{lemma}

Now we regularize the initial datum by letting $u_{0,\epsilon}=\rho_{\epsilon} \ast u_0$. Since $u_{0,\epsilon} \in H^{\infty}(\mathbb R)$, we deduce from Theorem \ref{smoothsol} that for any $\epsilon>0$, there exist a positive time $T_{\epsilon}$ and a unique solution $$u_{\epsilon} \in C([0,T_{\epsilon}];H^{\infty}(\mathbb R)) \quad \text{satisfying} \quad u_{\epsilon}(\cdot,0)=u_{0,\epsilon}.$$ We observe that $\|u_{0,\epsilon}\|_{H^s} \le \|u_0\|_{H^s}$. Thus, it follows from the proof of Proposition \ref{apriori}, that there exists a positive time $T=T(\|u_0\|_{H^s})$ such that the sequence of solutions $\{u_{\epsilon}\}$ can be extended on the time interval $[0,T]$ and satisfies 
\begin{equation} \label{maintheo3} 
\|u_{\epsilon}\|_{L^{\infty}_TH^s_x} \le \Lambda^s_T(u_{\epsilon}) \lesssim \|u_0\|_{H^s} 
\end{equation}
for all $\epsilon>0$.  Moreover, arguing as in the proof of Proposition \ref{apriori} and using estimate \eqref{lemma existence1.1}, we get that
\begin{equation} \label{maintheo4} 
\|D^{s-1}\partial_x^2u_{\epsilon}\|_{L^2_TL^{\infty}_x} \lesssim \|u_{0,\epsilon}\|_{H^{2s}} \lesssim \epsilon^{-s}\|u_0\|_{H^s},
\end{equation}
\begin{equation} \label{maintheo4b}
\|\partial_x^2u_{\epsilon}\|_{L^2_TL^{\infty}_x} \lesssim \|u_{0,\epsilon}\|_{H^{s+1}} \lesssim \epsilon^{-1}\|u_0\|_{H^s}
\end{equation}
and 
\begin{equation} \label{maintheo4bb}
\|D^{s-1+\frac{\alpha}2}\partial_xu_{\epsilon}\|_{L^2_TL^{\infty}_x} \lesssim \|u_{0,\epsilon}\|_{H^{2s-1+\frac{\alpha}2}} \lesssim \epsilon^{-(s-1+\frac{\alpha}2)}\|u_0\|_{H^s}
\end{equation}
for all $\epsilon>0$.

Now we will prove that $\{u_{\epsilon}\}$ is a Cauchy sequence in $C([0,T]:H^s(\mathbb R))$. We set $v=u_{\epsilon}-u_{\epsilon'}$, for $0<\epsilon<\epsilon'$.  Then $v$ satisfies 
\begin{equation} \label{maintheo5}
\partial_tv-D^{\alpha}\partial_xv+v\partial_xu_{\epsilon}+u_{\epsilon'}\partial_xv=0,
\end{equation}
with $v(\cdot,0)=u_{0,\epsilon}-u_{0,\epsilon'}$.
We deduce gathering \eqref{uniqueness3}, \eqref{lemma existence1.2} and \eqref{maintheo3}  that 
\begin{equation} \label{maintheo6} 
\|v\|_{L^{\infty}_TL^2_x} \underset{\epsilon
\rightarrow 0}{=} o(\epsilon^{s})  \quad \text{and} \quad \|v\|_{L^{\infty}_TH^{\sigma}_x} \underset{\epsilon
\rightarrow 0}{=} o(\epsilon^{s-\sigma}),
\end{equation}
for all $0 \le \sigma < s$. It remains to prove the convergence in $C([0,T]:H^s(\mathbb R))$. Note however that the proof in \cite{Po} does not seem to apply here since the regularity is lower than $3/2$.

\begin{proposition} \label{convergence}
Assume $0<\alpha<1$ and $\frac32-\frac{3\alpha}8<s\le \frac32$. Let $v$ be the solution of \eqref{maintheo5}. Then, there exists a time $T_1=T_1(\|u_0\|_{H^s})$ with $0<T_1<T$ such that 
\begin{equation} \label{convergence1} 
\|v\|_{L^{\infty}_{T_1}H^s_x} \le \Gamma_{T_1}^s(v) \underset{\epsilon \to 0}{\longrightarrow} 0,
\end{equation}
where 
\begin{displaymath}
\Gamma_{T}^s(v)=\max \big\{\lambda_T^s(v), \gamma_T^s(v) \big\}
\end{displaymath}
with
\begin{displaymath} 
\lambda_T^s(v)=\|v\|_{L^{\infty}_TH^s_x}, \quad  \text{and} \quad \gamma_T^s(v)=\|\partial_xv\|_{L^2_{T}L^{\infty}_x} \ .
\end{displaymath}
\end{proposition}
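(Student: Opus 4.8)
The plan is to reproduce, for the difference equation \eqref{maintheo5}, the very same three–pronged scheme (energy estimate, refined Strichartz, local smoothing together with a maximal function bound) used in the proof of Proposition \ref{apriori}, and to close a system of inequalities for $\lambda_T^s(v)$, $\gamma_T^s(v)$ and an auxiliary maximal quantity $\mu_T^s(v)=(1+T)^{-\beta}\big(\sum_j\|v\|^2_{L^{\infty}([j,j+1)\times[0,T])}\big)^{1/2}$. I would regard \eqref{maintheo5} as the forced linear equation $\partial_tv-D^{\alpha}\partial_xv=F$ with $F=-v\partial_xu_{\epsilon}-u_{\epsilon'}\partial_xv$, and exploit two features of the regularized pair $(u_{\epsilon},u_{\epsilon'})$: the uniform bound \eqref{maintheo3}, and the quantitative trade–off between the high–Sobolev growth of $u_{\epsilon}$ (the powers $\epsilon^{-s}$, $\epsilon^{-1}$, $\epsilon^{-(s-1+\frac{\alpha}2)}$ of \eqref{maintheo4}--\eqref{maintheo4bb}) and the lower–order smallness $\|v\|_{L^{\infty}_TH^{\sigma}_x}=o(\epsilon^{s-\sigma})$ recorded in \eqref{maintheo6}.

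First I would run the $H^s$ energy estimate on \eqref{maintheo5}: apply $J^s$, multiply by $J^sv$ and integrate. The coefficient $u_{\epsilon'}$ enters through $\int J^sv\,u_{\epsilon'}J^s\partial_xv$, which after integration by parts is $\lesssim\|\partial_xu_{\epsilon'}\|_{L^{\infty}_x}\|v\|_{H^s}^2$, and through the commutator $[J^s,u_{\epsilon'}]\partial_xv$, controlled by \eqref{Kato-Ponce1} with the uniform bounds on $u_{\epsilon'}$; both cost only uniform constants. The remaining energy term $\int J^sv\,v\,J^s\partial_xu_{\epsilon}$ is the genuinely dangerous one and is deferred to the last step. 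Next I would apply the refined Strichartz estimate of Proposition \ref{refinedStrichartz} with $\delta=1-\frac{\alpha}2$ to $v$, reducing $\gamma_T^s(v)$ to $T^{\kappa_1}\lambda_T^s(v)+T^{\kappa_2}\|J^{s-1+\frac{\alpha}2}F\|_{L^2_{T,x}}$, exactly as in \eqref{apriori4}, and split the forcing by the fractional Leibniz rule (Lemma \ref{LeibnizRule}) into a piece carrying the top derivative on $v$ and a piece carrying it on $u_{\epsilon}$. In the latter, $\|vD^{s-1+\frac{\alpha}2}\partial_xu_{\epsilon}\|_{L^2_{T,x}}\lesssim\|v\|_{L^{\infty}_{T,x}}\|u_{\epsilon}\|_{L^{\infty}_TH^{s+\frac{\alpha}2}_x}=o(\epsilon^{s-\frac12})\,\epsilon^{-\frac{\alpha}2}=o(\epsilon^{s-\frac12-\frac{\alpha}2})\to0$, since $s>\frac12+\frac{\alpha}2$.

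The pieces in which the top derivative falls on $v$ — for instance $\|u_{\epsilon'}D^{s-1+\frac{\alpha}2}\partial_xv\|_{L^2_{T,x}}$ — are estimated, after decomposing $\mathbb R$ into unit boxes, by $\mu_T^s(v)$ times a local smoothing norm of $v$, exactly as in \eqref{apriori6}. For this I must first establish, for the difference equation \eqref{maintheo5}, a local smoothing estimate analogous to \eqref{localsmoothing1}: running the computation of Lemma \ref{localsmoothinglemma} on \eqref{maintheo5} controls $\|D^{s+\frac{\alpha}2}v\|_{L^2([j,j+1)\times[0,T])}$ by $\lambda_T^s(v)$ plus lower–order contributions, where again the $u_{\epsilon'}$–terms cost only uniform constants while the $u_{\epsilon}$–terms are matched to small norms of $v$. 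The quantity $\mu_T^s(v)$ itself is estimated through Corollary \ref{maximalcoro} applied to the Duhamel formula for $v$, and is $o(1)$ because $\mu_T^s(v)\lesssim\|v\|_{L^{\infty}_TH^{\frac12+}_x}=o(\epsilon^{s-\frac12})$.

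The delicate point — the reason the $H^{\frac32+}$ argument of \cite{Po} does not transfer — is the top–order interaction $v\,\partial_xu_{\epsilon}$ at the $H^s$ level with $s\le\frac32$: the naive bound $\|v\|_{L^{\infty}_x}\|u_{\epsilon}\|_{H^{s+1}}\lesssim o(\epsilon^{s-\frac12})\,\epsilon^{-1}=o(\epsilon^{s-\frac32})$ is useless for $s<\frac32$. The fix is quantitative: rather than spending a full derivative on $u_{\epsilon}$, I localize in unit boxes and use the $\frac{\alpha}2$ gain of the local smoothing effect for the \emph{smooth} solution $u_{\epsilon}$ (Proposition \ref{localsmoothing} at regularity $s+1-\frac{\alpha}2$, so that only $\epsilon^{-(1-\frac{\alpha}2)}$ is lost), paired against $\mu_T^s(v)=o(\epsilon^{s-\frac12})$ and the bounded factor $\lambda_T^s(v)$; the resulting power $o(\epsilon^{s-\frac32+\frac{\alpha}2})$ tends to $0$ precisely because $s>s(\alpha)=\frac32-\frac{3\alpha}8>\frac32-\frac{\alpha}2$. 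This exponent balance is the main obstacle and the crux of the argument. With every dangerous term shown to be $o(1)$ and the remaining ones carrying a factor of $T$, $T^{\kappa_i}$, or $\lambda,\gamma,\mu$, I would finally choose $T_1$ small enough, as in \eqref{apriori10}, to absorb the $\Gamma_{T_1}^s(v)$ contributions into the left–hand side; since the data difference $\|v(\cdot,0)\|_{H^s}=\|u_{0,\epsilon}-u_{0,\epsilon'}\|_{H^s}\to0$ by \eqref{lemma existence1.2}, this yields $\Gamma_{T_1}^s(v)\to0$ as $\epsilon\to0$, which is \eqref{convergence1}.
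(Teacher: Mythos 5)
Your overall architecture (energy estimate, refined Strichartz, local smoothing, Bona--Smith trade-off between the $\epsilon^{-\nu}$ growth of high norms of $u_{\epsilon}$ and the $o(\epsilon^{\beta})$ smallness of low norms of $v$) matches the paper, and several individual steps are fine. But the step you yourself single out as the crux --- the top-order interaction $v\,\partial_x u_{\epsilon}$ in the $H^s$ energy estimate --- does not close as you propose. Your mechanism rests on the claim $\mu_T^s(v)=\big(1+T\big)^{-\beta}\big(\sum_j\|v\|^2_{L^{\infty}([j,j+1)\times[0,T])}\big)^{1/2}=o(\epsilon^{s-\frac12})$, and this is not available. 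The inequality $\mu_T^s(v)\lesssim\|v\|_{L^{\infty}_TH^{1/2+}_x}$ is false: the supremum in $t$ sits \emph{inside} the sum over $j$ (a traveling bump makes $\sum_j\sup_t$ infinite while $\sup_t\sum_j$ stays bounded), which is precisely why the paper needs the maximal function estimate at all. And the correct route, Corollary \ref{maximalcoro} applied to the Duhamel formula for $v$, gives $\mu_T^s(v)\lesssim\|u_{0,\epsilon}-u_{0,\epsilon'}\|_{H^{1/2+}}+T^{1/2}\|J^{1/2+\tilde\theta}(v\partial_xu_{\epsilon}+u_{\epsilon'}\partial_xv)\|_{L^2_{T,x}}$: only the first term is $o(\epsilon^{s-\frac12})$; the inhomogeneous term is merely $O(\Gamma_T^s(v)+1)$, with no positive power of $\epsilon$, because it involves $s+\frac{\alpha}2>s$ derivatives of $v$, for which \eqref{maintheo6} gives nothing small. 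Consequently your pairing produces a term of size $T^{1/2}\,\epsilon^{-(1-\frac{\alpha}2)}\,\Gamma_T^s(v)\,\lambda_T^s(v)$ whose coefficient blows up as $\epsilon\to0$ and cannot be absorbed by shrinking $T_1$; the advertised balance $o(\epsilon^{s-\frac32+\frac{\alpha}2})\to0$ is therefore not established.

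The paper closes this term by a different, purely "exponent-matched" mechanism that needs neither the maximal function nor local smoothing: writing $\|D^s(v\partial_xu_{\epsilon})\|_{L^2}\lesssim\|D^{s-1}(\partial_xv\,\partial_xu_{\epsilon})\|_{L^2}+\|D^{s-1}(v\,\partial_x^2u_{\epsilon})\|_{L^2}$ and applying the fractional Leibniz rule \eqref{LeibnizRule1} with $\sigma=s-1\in(0,1)$ (see \eqref{convergence4}--\eqref{convergence6}), the dangerous contributions become $\|v\|_{L^2}\|D^{s-1}\partial_x^2u_{\epsilon}\|_{L^{\infty}}$ and $\|D^{s-1}v\|_{L^2}\|\partial_x^2u_{\epsilon}\|_{L^{\infty}}$, which after time integration are $o(\epsilon^{s})\cdot O(\epsilon^{-s})$ and $o(\epsilon)\cdot O(\epsilon^{-1})$ by \eqref{maintheo6}, \eqref{maintheo4} and \eqref{maintheo4b} --- the Bona--Smith exponents cancel exactly, with no deficit to make up. (The unit-box decomposition is used only in the $\gamma_T^s(v)$ estimate, for $u_{\epsilon'}D^{s-1+\frac{\alpha}2}\partial_xv$, and there the $\ell^2_jL^{\infty}$ factor falls on $u_{\epsilon'}$ --- uniformly bounded by \eqref{maintheo3} --- while the local smoothing falls on $v$; your statement that this term is "$\mu_T^s(v)$ times a local smoothing norm of $v$" has the roles reversed.) To repair your argument you should replace your treatment of the energy term by this Leibniz-rule splitting; as written, the proof has a genuine gap at its central step.
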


\begin{proof} 
First, we deal with $\lambda_T^s(v)$. We apply $J^s$ to equation \eqref{maintheo5} with $u_1=u_{\epsilon}$ and $u_2=u_{\epsilon'}$, multiply the result by $J^sv$ and integrate in space to deduce that 
\begin{equation} \label{convergence2} 
\frac12\frac{d}{dt}\|v\|_{H^s}^2+\int_{\mathbb R}J^s(v\partial_xu_{\epsilon})J^svdx+\int_{\mathbb R}J^s(u_{\epsilon'}\partial_xv)J^svdx=0.
\end{equation}
We treat the third term on the left-hand side of \eqref{convergence2} by using estimate \eqref{Kato-Ponce1} and integrating by parts. It follows that 
\begin{equation} \label{convergence3}
\begin{split}
\int_{\mathbb R}J^s(u_{\epsilon'}\partial_xv)J^svdx&=\int_{\mathbb R}[J^s,u_{\epsilon'}]\partial_xvJ^svdx-\frac12\int_{\mathbb R}\partial_xu_{\epsilon'}J^svJ^svdx \\ & \lesssim \|\partial_xv\|_{L^{\infty}}\|J^su_{\epsilon'}\|_{L^2}\|v\|_{H^s}+\|\partial_xu_{\epsilon'}\|_{L^{\infty}}\|v\|_{H^s}^2 .
\end{split}
\end{equation}
For the second term, we have applied H\" older's inequality  that
\begin{equation} \label{convergence4}
\begin{split}
\int_{\mathbb R}&J^s(v\partial_xu_{\epsilon})J^svdx \\ &\lesssim \big(\|v\partial_xu_{\epsilon}\|_{L^2}+\|D^s(v\partial_xu_{\epsilon})\|_{L^2}\big)\|v\|_{H^s} \\ & \lesssim \big(\|v\|_{L^2}\|\partial_xu_{\epsilon}\|_{L^{\infty}}+\|D^{s-1}(\partial_xv\partial_xu_{\epsilon})\|_{L^2}+\|D^{s-1}(v\partial_x^2u_{\epsilon})\|_{L^2}\big)\|v\|_{H^s}.
\end{split}
\end{equation}
Now we deduce using estimate \eqref{LeibnizRule1} with $\sigma=s-1 \in (0,1)$ that 
\begin{equation} \label{convergence5}
 \|D^{s-1}(\partial_xv\partial_xu_{\epsilon})\|_{L^2} \lesssim \|\partial_xv\|_{L^{\infty}}\|D^{s-1}\partial_xu_{\epsilon}\|_{L^2}+\|D^{s-1}\partial_xv\|_{L^2}\|\partial_xu_{\epsilon}\|_{L^{\infty}}
\end{equation}
and
\begin{equation} \label{convergence6}
\begin{split}
\|D^{s-1}(v\partial_x^2u_{\epsilon})\|_{L^2} & \lesssim 
\|vD^{s-1}\partial_x^2u_{\epsilon}\|_{L^2}+\|D^{s-1}v\|_{L^2}\|\partial_x^2u_{\epsilon}\|_{L^{\infty}} \\ & \lesssim \|v\|_{L^2}\|D^{s-1}\partial_x^2u_{\epsilon}\|_{L^{\infty}}+\|D^{s-1}v\|_{L^2}\|\partial_x^2u_{\epsilon}\|_{L^{\infty}}.
\end{split}
\end{equation}
Hence, we conclude gathering \eqref{convergence2}--\eqref{convergence6} that
\begin{displaymath}
\begin{split}
\frac{d}{dt}\|v\|_{H^s} & \lesssim \big(\|\partial_xu_{\epsilon}\|_{L^{\infty}} +\|\partial_xu_{\epsilon'}\|_{L^{\infty}}\big)\|v\|_{H^s}+\big(\|u_{\epsilon}\|_{H^s} +\|u_{\epsilon'}\|_{H^s}\big)\|\partial_xv\|_{L^{\infty}} \\ & \quad+\|D^{s-1}\partial_x^2u_{\epsilon}\|_{L^{\infty}} \|v\|_{L^2}+\|\partial_x^2u_{\epsilon}\|_{L^{\infty}}\|D^{s-1}v\|_{L^2}.
\end{split}
\end{displaymath}
Thus, Gronwall's inequality yields
\begin{displaymath} 
\lambda_T^s(v) \lesssim\big(\|u_{0,\epsilon}-u_{0,\epsilon'}\|_{H^s}+\int_0^Tf(t)dt \big) e^{cT^{\frac12}(\|\partial_xu_{\epsilon}\|_{L^2_TL^{\infty}_x} +\|\partial_xu_{\epsilon'}\|_{L^2_TL^{\infty}_x})},
\end{displaymath}
where 
\begin{displaymath} 
f(t)=\big(\|u_{\epsilon}\|_{H^s} +\|u_{\epsilon'}\|_{H^s}\big)\|\partial_xv\|_{L^{\infty}}
+\|D^{s-1}\partial_x^2u_{\epsilon}\|_{L^{\infty}} \|v\|_{L^2}+\|\partial_x^2u_{\epsilon}\|_{L^{\infty}}\|D^{s-1}v\|_{L^2}.
\end{displaymath}
Therefore, we get from H\"older's inequality and \eqref{maintheo3} that
\begin{equation} \label{convergence8}
\lambda_T^s(v) \lesssim \big( T^{\frac12}\|u_0\|_{H^s}\gamma_T^s(v)+g_{\epsilon,\epsilon'}\big)e^{cT^{\frac12}\|u_0\|_{H^s}},
\end{equation}
where 
\begin{displaymath} 
g_{\epsilon,\epsilon'}=\|u_{0,\epsilon}-u_{0,\epsilon'}\|_{H^s}+T^{\frac12}\|D^{s-1}\partial_x^2u_{\epsilon}\|_{L^2_TL^{\infty}_x} \|v\|_{L^{\infty}_TL^2_x}+T^{\frac12}\|\partial_x^2u_{\epsilon}\|_{L^2_TL^{\infty}_x}\|D^{s-1}v\|_{L^{\infty}_TL^2_x}
\end{displaymath}
satisfies 
\begin{equation} \label{convergence9}
g_{\epsilon,\epsilon'} \underset{\epsilon,\epsilon' \to 0}{\longrightarrow}0,
\end{equation}
in view of \eqref{maintheo3}--\eqref{maintheo4b} and \eqref{maintheo6}.

To handle $\gamma_T^s(u)$, we use estimate \eqref{refinedStrichartz} as in Proposition \ref{apriori} and deduce that
\begin{equation} \label{convergence10} 
\gamma_T^s(v)   \lesssim T^{\kappa_1}\|v\|_{L^{\infty}_TH^s_x}+T^{\kappa_2}\|J^{s-1+\frac{\alpha}2}(v\partial_xu_{\epsilon})\|_{L^2_{T,x}}+ 
T^{\kappa_2}\|J^{s-1+\frac{\alpha}2}(u_{\epsilon' }\partial_xv)\|_{L^2_{T,x}},
\end{equation}
where $\kappa_1$ and $\kappa_2$ are two positive number (lesser than $\frac12$) given by Proposition \ref{refinedStrichartz}. We deduce from estimate \eqref{LeibnizRule1} that
\begin{equation} \label{convergence11}
\begin{split}
\|J^{s-1+\frac{\alpha}2}(v\partial_xu_{\epsilon})\|_{L^2_{T,x}} & \lesssim 
\|\partial_xu_{\epsilon}\|_{L^2_TL^{\infty}_x}\|v\|_{L^{\infty}_TH^s_x} +\|D^{s-1+\frac{\alpha}2}\partial_xu_{\epsilon}\|_{L^2_TL^{\infty}_x}\|v\|_{L^{\infty}_TL^2_x}.
\end{split}
\end{equation}
By using estimate \eqref{LeibnizRule1} again, we get that
\begin{equation} \label{convergence12}
\|J^{s-1+\frac{\alpha}2}(u_{\epsilon' }\partial_xv)\|_{L^2_{T,x}} \lesssim \|\partial_xv\|_{L^2_TL^{\infty}_x}\|u_{\epsilon'}\|_{L^{\infty}_TH^s_x}+\|u_{\epsilon'}D^{s-1+\frac{\alpha}2}\partial_xv\|_{L^2_{T,x}}.
\end{equation}
Next, we estimate the second term on the right-hand side of \eqref{convergence12} as follows
\begin{equation} \label{convergence13}
\begin{split}
\|u_{\epsilon'}&D^{s-1+\frac{\alpha}2}\partial_xv\|_{L^2_{T,x}} \\ &=\Big(\sum_{j=-\infty}^{+\infty}\|u_{\epsilon'}D^{s-1+\frac{\alpha}2}\partial_xv\|_{L^2([j,j+1)\times[0,T])}^2 \Big)^{\frac12} \\& 
\le \Big(\sum_{j=-\infty}^{+\infty}\|u_{\epsilon'}\|_{L^{\infty}([j,j+1)\times[0,T])}^2 \Big)^{\frac12}
\sup_{j\in \mathbb Z}\|D^{s+\frac{\alpha}2}\mathcal{H}v\|_{L^2([j,j+1)\times[0,T])} \\ & 
\lesssim (1+T)^{\beta}\|u_0\|_{H^s}\sup_{j\in \mathbb Z}\|D^{s+\frac{\alpha}2}\mathcal{H}v\|_{L^2([j,j+1)\times[0,T])},
\end{split}
\end{equation}
where we used \eqref{maintheo3} for the last inequality. Moreover, we argue exactly as in the proof of Proposition \ref{localsmoothing} using that $v$ satisfies equation \eqref{maintheo5} and using the estimates \eqref{convergence3}--\eqref{convergence6} to deduce that
\begin{equation} \label{convergence14} 
\begin{split}
&\sup_{j\in \mathbb Z}\|D^{s+\frac{\alpha}2}\mathcal{H}v\|_{L^2([j,j+1)\times[0,T])} \\ & \lesssim 
\big(1+T+\|\partial_xu_{\epsilon}\|_{L^1_TL^{\infty}_x}+\|\partial_xu_{\epsilon'}\|_{L^1_TL^{\infty}_x}\big)^{\frac12}\|v\|_{L^{\infty}_TH^s_x}+T^{\frac12}\|u_{\epsilon'}\|_{L^{\infty}_TH^s_x}\|\partial_xv\|_{L^2_TL^{\infty}_x} \\ & \quad +T^{\frac12}\big(\|D^{s-1}\partial_x^2u_{\epsilon}\|_{L^2_TL^{\infty}_x}\|v\|_{L^{\infty}_TL^2_x}+ \|\partial_x^2u_{\epsilon}\|_{L^2_TL^{\infty}_x}\|D^{s-1}v\|_{L^{\infty}_TL^2_x}\big).
\end{split}
\end{equation}
Hence, we deduce  from \eqref{convergence10}--\eqref{convergence14} that
\begin{equation} \label{convergence15} 
\begin{split}
\gamma_T^s(v) & \lesssim T^{\kappa_1}\lambda_T^s(v)+T^{\kappa_2}(1+T)^{\beta}\|u_0\|_{H^s}\big(1+T+T^{\frac12}\|u_0\|_{H^s}\big)^{\frac12}(\lambda_T^s(v)+\gamma_T^s(v))\\ & \quad+T^{\kappa_2}\widetilde{g}_{\epsilon,\epsilon'},
\end{split}
\end{equation}
where 
\begin{displaymath}
\begin{split}
\widetilde{g}_{\epsilon,\epsilon'}&=\|D^{s-1+\frac{\alpha}2}\partial_xu_{\epsilon}\|_{L^2_TL^{\infty}_x}\|v\|_{L^{\infty}_TL^2_x}\\ & \quad +(1+T)^{\beta}\|u_0\|_{H^s}\big(\|D^{s-1}\partial_x^2u_{\epsilon}\|_{L^2_TL^{\infty}_x}\|v\|_{L^{\infty}_TL^2_x}+ \|\partial_x^2u_{\epsilon}\|_{L^2_TL^{\infty}_x}\|D^{s-1}v\|_{L^{\infty}_TL^2_x}\big),
\end{split}
\end{displaymath}
so that 
\begin{equation}  \label{convergence16} 
\widetilde{g}_{\epsilon,\epsilon'} \underset{\epsilon,\epsilon' \to 0}{\longrightarrow} 0, 
\end{equation}
due to \eqref{maintheo3}--\eqref{maintheo4bb} and \eqref{maintheo6}.

Therefore, we conclude the proof of Proposition \ref{convergence} gathering \eqref{convergence8}, \eqref{convergence9}, \eqref{convergence15} and \eqref{convergence16}. 
\end{proof}

With Proposition \ref{convergence} at hand, we deduce that $\{u_{\epsilon}\}$ satisfies the Cauchy criterion in $\big(C([0,T_1]:H^s(\mathbb R)),\|\cdot\|_{L^{\infty}_{T_1}H^s_x}\big)$ as $\epsilon$ tends to zero. Therefore, there exists a function $u \in C([0,T_1]:H^s(\mathbb R))$ such that 
\begin{equation} \label{maintheo7}
\|u_{\epsilon}-u\|_{L^{\infty}_{T_1}H^s_x} \underset{\epsilon \to 0}{\longrightarrow} 0 \ .
\end{equation}
 Moreover, we deduce easily from \eqref{maintheo7}, that $u$ is a solution of \eqref{dispBurgers} in the distributional sense and belongs to the class \eqref{maintheo1} (with $T_1$ instead of $T$).

\subsection{Continuity of the flow map}
 Once again, we assume that $0<\alpha<1$ and $\frac32-\frac{3\alpha}8< s \le 3/2$. Fix $u_0 \in H^s(\mathbb R)$. By the existence and uniqueness part, we know that there exists a positive time $T=T(\|u_0\|_{H^s})$ and a unique solution $u \in C([0,T]:H^s(\mathbb R))$ to \eqref{dispBurgers}. Since $T$ is a nonincreasing function of its argument, for any $0<T'<T$, there exists a small ball $B_{\tilde{\delta}}(u_0)$ of $H^s$ centered in $u_0$ and of radius $\tilde{\delta}>0$, \textit{i.e.}
 \begin{displaymath}
 B_{\tilde{\delta}}(u_0)=\big\{v_0 \in H^s(\mathbb R) \ : \ \|v_0-u_0\|_{H^s} < \tilde{\delta} \big\},
 \end{displaymath} 
 such that for each $v_0 \in B_{\tilde{\delta}}(u_0)$, the solution $v$ to \eqref{dispBurgers} emanating from $v_0$ is defined at least on the time interval $[0,T']$.
 
Let $\theta>0$ be given. It suffices to prove that there exists $\delta=\delta(\theta)$ with $0<\delta<\tilde{\delta}$ such that for any initial data $v_0 \in H^s(\mathbb R)$ with $\|u_0-v_0\|_{H^s} < \delta$, the solution $v \in C([0,T'];H^s(\mathbb R))$  emanating from $v_0$ satisfies 
\begin{equation} \label{maintheo8} 
\|u-v\|_{L^{\infty}_{T'}H^s_x} < \theta.
\end{equation}

For any $\epsilon>0$, we normalize the initial data $u_0$ and $v_0$ by defining $u_{0,\epsilon}=\rho_{\epsilon} \ast u_0$ and $v_{0,\epsilon}=\rho_{\epsilon} \ast v_0$ as in the previous subsection and consider the associated smooth solutions $u_{\epsilon}, \ v_{\epsilon} \in C([0,T'];H^{\infty}(\mathbb R))$. Then it follows from the triangle inequality that 
\begin{equation} \label{maintheo9} 
\|u-v\|_{L^{\infty}_{T'}H^s_x} \le \|u-u_{\epsilon}\|_{L^{\infty}_{T'}H^s_x}+\|u_{\epsilon}-v_{\epsilon}\|_{L^{\infty}_{T'}H^s_x}
+\|v-v_{\epsilon}\|_{L^{\infty}_{T'}H^s_x} \ .
\end{equation}
On the one hand, according to \eqref{maintheo7}, we can choose $\epsilon_0$ small enough so that 
\begin{equation} \label{maintheo10} 
\|u-u_{\epsilon_0}\|_{L^{\infty}_{T'}H^s_x}+\|v-v_{\epsilon_0}\|_{L^{\infty}_{T'}H^s_x} < 2\theta/3.
\end{equation}
On the other hand, we get from \eqref{lemma existence1.1} that 
\begin{displaymath} 
\|u_{0,\epsilon_0}-v_{0,\epsilon_0}\|_{H^2} \lesssim \epsilon_0^{-(2-s)}\|u_0-v_0\|_{H^s} \lesssim \epsilon_0^{-(2-s)}\delta.
\end{displaymath}
Therefore, by using the continuity of the flow map for initial data in $H^2(\mathbb R)$ (c.f. Theorem \ref{smoothsol}), we can choose $\delta>0$ small enough such that 
\begin{equation} \label{maintheo11} 
\|u_{\epsilon_0}-v_{\epsilon_0}\|_{L^{\infty}_{T'}H^s_x} < \theta/3.
\end{equation}
Estimate \eqref{maintheo8} is concluded gathering \eqref{maintheo9}--\eqref{maintheo11}.

This concludes the proof of Theorem \ref{maintheo}.

\section{An ill-posedness result}

As in \cite{BKPSV}, one can use the solitary wave solutions to disprove the uniform continuity of the flow map for the Cauchy problem under suitable conditions. More precisely,
we consider again the initial value problem (IVP)
\begin{equation}\label{solwave3}
\begin{cases}
\partial_t u-D^{\alpha}\partial_x u+u\,\partial_x u=0,\;\;x, \,t\in \mathbb R,\\
u(x,0)=u_0(x).
\end{cases}
\end{equation}

\begin{proposition}\label{illposed}
If $ \;1/3\leq \alpha \le 1/2$, then the IVP \eqref{solwave3} 
 is ill-posed in $ H^{s_{\alpha}}(\mathbb R) $ with
$s_{\alpha}=\frac12-\alpha$, in the sense that the time of existence $T$
and the continuous dependence cannot be expressed in terms of the
size of the data in the $H^{s_\alpha}$-norm. More precisely, there exists $c_0>0$ such that for any
$\delta, \,t>0$ small there exist data
$u_1,\,u_2\in\mathcal S(\mathbb R)$ such that
\begin{equation*}
\|u_1\|_{s,2}+\|u_2\|_{s,2}\leq c_0,\;\;\|u_1-u_2\|_{s,2}\leq
\delta,\;\;\|u_1(t)-u_2(t)\|_{s,2}>c_0,
\end{equation*}
where $u_j(\cdot)$ denotes the solution of the IVP \eqref{solwave3} with data
$u_j$,\; $j=1,2$.
\end{proposition}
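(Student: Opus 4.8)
The plan is to follow the strategy of \cite{BKPSV}: I would exhibit the failure of uniform continuity by comparing two exact solitary-wave solutions of \eqref{solwave3} whose speeds are close (so that the data are close in $H^{s_\alpha}$) but which, after an arbitrarily short time, have drifted apart and decohered in $H^{s_\alpha}$. Recall that \eqref{solwave3} admits solitary waves $u_c(x,t)=\phi_c(x-ct)$, $c>0$, where $\phi_c$ solves $D^\alpha\phi_c+c\phi_c=\tfrac12\phi_c^2$, whose existence for $1/3\le\alpha$ is surveyed in the last section. The scaling invariance $u_\lambda(x,t)=\lambda^\alpha u(\lambda x,\lambda^{\alpha+1}t)$ forces $\phi_c(x)=c\,\psi(c^{1/\alpha}x)$ with $\psi:=\phi_1$, and a direct computation gives $\|\phi_c\|_{\dot H^\sigma}=c^{1+(\sigma-\frac12)/\alpha}\|\psi\|_{\dot H^\sigma}$. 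At the critical index $\sigma=s_\alpha=\tfrac12-\alpha$ the exponent vanishes, so $\|\phi_c\|_{\dot H^{s_\alpha}}=\|\psi\|_{\dot H^{s_\alpha}}$ is independent of $c$; moreover, since $\phi_c$ is concentrated at frequencies $|\xi|\lesssim c^{1/\alpha}$ and $\alpha\le\tfrac12$ keeps the lower-order ($\dot H^\sigma$, $\sigma<s_\alpha$) contributions bounded as $c\to\infty$, the full norm $\|\phi_c\|_{H^{s_\alpha}}$ stays comparable to $\|\psi\|_{\dot H^{s_\alpha}}$.

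First I would fix the two solutions as solitary waves of speeds $c_1=\omega$ and $c_2=\omega(1+\eta)$, with $\omega$ large and $\eta>0$ small, both to be chosen in terms of $\delta$ and $t$. Writing the difference of the profiles in the rescaled variable $y=\omega^{1/\alpha}x$ and using the critical ($\sigma=s_\alpha$) scale invariance above, one finds $\|\phi_{c_1}-\phi_{c_2}\|_{\dot H^{s_\alpha}}=\|\,(1+\eta)\psi((1+\eta)^{1/\alpha}\cdot)-\psi\,\|_{\dot H^{s_\alpha}}\lesssim\eta\,\|g\|_{\dot H^{s_\alpha}}$, where $g=\psi+\tfrac1\alpha\,y\psi'$ is the scaling generator; crucially this bound, and the analogous one for the bounded lower-order part, are \emph{independent of} $\omega$. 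Hence choosing $\eta=\eta(\delta)$ small enough ensures $\|\phi_{c_1}-\phi_{c_2}\|_{H^{s_\alpha}}\le\delta$ for every large $\omega$, while $\|\phi_{c_1}\|_{H^{s_\alpha}}+\|\phi_{c_2}\|_{H^{s_\alpha}}$ stays bounded by a fixed constant.

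Next I would estimate the separation at time $t$. Since the two profiles are translated by $c_1t$ and $c_2t$, their relative shift is $\Delta=(c_2-c_1)t=\omega\eta\,t$, whereas each is localized on the spatial scale $\omega^{-1/\alpha}$. Expanding the squared $\dot H^{s_\alpha}$-norm of the difference and passing to the rescaled frequency variable, the cross term becomes an oscillatory integral whose phase oscillates at rate $\Delta\,\omega^{1/\alpha}=\omega^{1+1/\alpha}\eta\,t$; by the Riemann--Lebesgue lemma it tends to $0$ as $\omega\to\infty$ (with $\eta,t$ fixed). Therefore $\|u_1(t)-u_2(t)\|_{\dot H^{s_\alpha}}^2\to\|\phi_{c_1}\|_{\dot H^{s_\alpha}}^2+\|\phi_{c_2}\|_{\dot H^{s_\alpha}}^2=2\|\psi\|_{\dot H^{s_\alpha}}^2$, so that for $\omega$ large enough $\|u_1(t)-u_2(t)\|_{H^{s_\alpha}}\ge c_0$ for a fixed positive $c_0\sim\|\psi\|_{\dot H^{s_\alpha}}$. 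Thus, for any prescribed small $\delta$ and $t$, fixing $\eta=\eta(\delta)$ and then $\omega=\omega(\delta,t)$ large produces data that are $\delta$-close in $H^{s_\alpha}$, with bounded $H^{s_\alpha}$-norms, yet whose solutions are $c_0$-separated at time $t$, which is the asserted failure of uniform dependence.

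Finally, to meet the requirement $u_1,u_2\in\mathcal S(\mathbb R)$, I would replace each $\phi_{c_i}$ (which is smooth and lies in every $H^\sigma$, but decays only algebraically) by a Schwartz approximant $\widetilde\phi_{c_i}$, close to it in some high norm $H^\sigma$ with $\sigma>s(\alpha)$. By the uniqueness and the continuity of the flow map at the level $\sigma>s(\alpha)$ furnished by Theorem \ref{maintheo}, the genuine solutions issuing from $\widetilde\phi_{c_i}$ remain $H^\sigma$-close, hence $H^{s_\alpha}$-close, to the translating solitary waves on $[0,t]$, so all the $H^{s_\alpha}$-quantities above are perturbed by an arbitrarily small amount and the three inequalities persist. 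The main obstacle is the balancing in the third paragraph: one must simultaneously keep the data $\delta$-close (small speed ratio $\eta$) and force the accumulated drift $\omega\eta t$ to dominate the shrinking width $\omega^{-1/\alpha}$ at the short time $t$ (large $\omega$). That these are compatible while the individual critical norms remain $O(1)$ is exactly what the $L^2$-critical scaling provides, and the restriction $1/3\le\alpha\le\tfrac12$ is what guarantees both the existence of $\psi$ and that $s_\alpha\ge0$ with controlled lower-order norms; quantifying the decay of the oscillatory cross term is the one genuinely analytic point.
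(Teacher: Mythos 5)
Your argument is essentially the paper's own proof: both exploit the family of solitary waves $\phi_c$, the exact invariance of the $\dot H^{s_\alpha}$-norm under the critical scaling ($2s_\alpha+2\alpha-1=0$), closeness of the data as the speed ratio tends to $1$, and the Riemann--Lebesgue lemma to kill the oscillatory cross term at time $t$ as the speeds tend to infinity. The only addition is your final mollification step to place the data in $\mathcal S(\mathbb R)$ (the profiles decay only algebraically), a point the paper passes over silently; otherwise the two proofs coincide.
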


\begin{remark}
For $\alpha \in [\frac{1}{3},\frac{1}{2}],$ Proposition \ref{illposed} reinforces the result in \cite{MST} which states that the flow map is not $C^2.$
\end{remark}

\begin{proof}
Let $Q_1$ be the solution of the equation
\begin{equation}\label{solwave1}
D^{\alpha}Q+c\,Q-\frac12\,Q^2=0,\quad \alpha\ge 1/3,
\end{equation}
with speed of propagation $c=1$ (see next Section for a justification of existence of such a solution).

Set $\varphi_{\alpha,c}(x)= c^{\alpha}\,Q_1(c x)$ and consider
\begin{equation}\label{solwave2}
u_{\alpha,c}(x,t)=\varphi_{\alpha,c}(x-c^{\alpha}t)= c^{\alpha}\,Q_1(cx-c^{1+\alpha}t)
\end{equation}
solution of the initial value problem \eqref{solwave3} with initial data
$$
u(x,0)=u_{\alpha,c}(x,0)= c^{\alpha}\,Q_1(cx).
$$

We choose  two solutions $u_{\alpha,c_1}, \,u_{\alpha,c_1}$ with $c_1\neq c_2$.  Let $s_{\alpha}=\frac12-\alpha$ be the critical Sobolev
index.

At time $t=0$ we have that
\begin{equation}\label{solwave4}
\begin{split}
\|u_{\alpha,c_1}(\cdot,0)-u_{\alpha,c_2}(\cdot,0)\|^2_{\dot{H^{s_{\alpha}}}}&=\|D^{s_{\alpha}}(\varphi_{\alpha,c_1}-\varphi_{\alpha,c_2})(\cdot)\|_{L^2}^2\\
&=\|D^{s_{\alpha}}\varphi_{\alpha,c_1}(\cdot)\|_{L^2}^2+\|D^{s_{\alpha}}\varphi_{\alpha,c_2}(\cdot)\|_{L^2}^2\\
&\;\;\;\;- 2\langle \varphi_{\alpha,c_1}(\cdot), \varphi_{\alpha,c_2}(\cdot)\rangle_{\dot{H^{s_{\alpha}}}}.
\end{split}
\end{equation}

Observe that  for $t\ge 0$, $\|D^{s_{\alpha}}\varphi_{\alpha,c_j}(\cdot,t)\|_{L^2}^2= \|D^{s_{\alpha}} Q_1\|_{L^2}^2$ for $j=1,2$. In fact,
\begin{equation}\label{solwave5}
\begin{split}
\|D^{s_{\alpha}}\varphi_{\alpha,c_j}(\cdot,t)\|_{L^2}^2&=\int |\xi|^{2s_{\alpha}}c_j^{2\alpha-2}|e^{-2\pi ic_j^{\alpha} t\xi}\,\widehat{Q}_1(\xi/c_j)|^2\,d\xi\\
&=c_j^{2s_{\alpha}+2\alpha-1}\,\|D^{s_{\alpha}} Q_1\|_{L^2}^2=\|D^{s_{\alpha}} Q_1\|_{L^2}^2.
\end{split}
\end{equation}

On the other hand,
\begin{equation}\label{solwave6}
\begin{split}
 \langle \varphi_{\alpha,c_1}(\cdot), \varphi_{\alpha,c_2}(\cdot)\rangle_{\dot{H^{s_{\alpha}}}}
 &=\int D^{s_{\alpha}}\varphi_{\alpha,c_1}(x)\,\overline{ D^{s_{\alpha}}\varphi_{\alpha,c_2}(x)}\,dx\\
 &=\int |\xi|^{2s_{\alpha}}\widehat{\varphi}_{\alpha,c_1}(\xi)\,\overline{\widehat{\varphi}_{\alpha,c_2}(\xi)}\,d\xi\\
 &=(c_1\,c_2)^{(\alpha-1)} \,\int |\xi|^{2s_{\alpha}}\widehat{Q}_1(\xi/c_1)\overline{\widehat{Q}_1(\xi/c_2)}\,d\xi\\
 &=(c_1\,c_2)^{(\alpha-1)}\,c_1^{2s_{\alpha}+1}\,\int |\eta|^{2s_{\alpha}} \,\widehat{Q}_1(\eta)\overline{\widehat{Q}_1(\dfrac{c_1}{c_2}\eta)}\,d\eta\\
 &=\Big(\dfrac{c_1}{c_2}\Big)^{1-\alpha}\,\int |\eta|^{2s_{\alpha}} \widehat{Q}_1(\eta)\,\overline{\widehat{Q}_1(\dfrac{c_1}{c_2}\eta)}\,d\eta.
 \end{split}
 \end{equation}
 
 Now set $\theta=\dfrac{c_1}{c_2}$ such that $\theta\to 1$ then
 \begin{equation}\label{solwave7}
 \langle \varphi_{\alpha,c_1}(\cdot), \varphi_{\alpha,c_2}(\cdot)\rangle_{\dot{H^{s_{\alpha}}}}\to \|D^{s_{\alpha}}Q_1\|_{L^2}^2
  \text{\hskip10pt as\hskip10pt}
 \theta\to 1.
 \end{equation}
 
 Therefore,
 \begin{equation}\label{solwave8}
 \|u_{\alpha,c_1}(\cdot,0)-u_{\alpha,c_2}(\cdot,0)\|^2_{\dot{H^{s_{\alpha}}}}\to 0 \text{\hskip10pt as\hskip10pt} \theta\to 1.
 \end{equation}

Now let $t>0$, as before we only need to check the interaction

\begin{equation}\label{solwave9}
\begin{split}
 \langle u_{\alpha,c_1}(\cdot,t), u_{\alpha,c_2}(\cdot,t)\rangle_{\dot{H^{s_{\alpha}}}}
 &=\int D^{s_{\alpha}}\varphi_{\alpha,c_1}(x-c_1t)\,\overline{ D^{s_{\alpha}}\varphi_{\alpha,c_2}(x-c_2t)}\,dx\\
 &=\int  e^{-2\pi it\xi(c_1^{\alpha}-c_2^{\alpha})}\,|\xi|^{2s_{\alpha}}\widehat{\varphi}_{\alpha,c_1}(\xi)\,\overline{\widehat{\varphi}_{\alpha,c_2}(\xi)}\,d\xi\\
 &=(c_1\,c_2)^{(\alpha-1)} \,\int e^{-2\pi it\xi(c_1^{\alpha}-c_2^{\alpha})}\, |\xi|^{2s_{\alpha}}\widehat{Q}_1(\xi/c_1)\overline{\widehat{Q}_1(\xi/c_2)}\,d\xi\\
 &=\Big(\dfrac{c_1}{c_2}\Big)^{1-\alpha}\,\int e^{-2\pi itc_1\eta(c_1^{\alpha}-c_2^{\alpha})}\, |\eta|^{2s_{\alpha}} \widehat{Q}_1(\eta)\,\overline{\widehat{Q}_1(\dfrac{c_1}{c_2}\eta)}\,d\eta.
 \end{split}
 \end{equation}
 
 Making $c_1^{\alpha}=N+1$ and $c_2^{\alpha}=N$, $N\in \mathbb N$,  and letting $N\to\infty$, the Riemann-Lebesgue lemma implies that
 \begin{equation}\label{solwave10}
 \int e^{-2\pi it\eta(N+1)^{1/\alpha}}\, |\eta|^{2s_{\alpha}} \widehat{Q}_1(\eta)\,\overline{\widehat{Q}_1((\frac{N+1}{N})^{1/\alpha}\,\eta)}\,d\eta
\to 0.
\end{equation}

The result follows.

\end{proof}

\section{Varia and open problems}

\subsection{Solitary waves}

This Subsection is essentially a survey of known results.

A (localized) solitary wave solution of \eqref{dispBurgers} of the form
$u(x,t)=Q_c(x-ct)$  must satisfy the equation
\begin{equation} \label{solitarywave}
D^{\alpha}Q_c + cQ_c -\frac12Q_c^2=0,
\end{equation}
where $c>0$.

One does not expect solitary waves to exist when
$\alpha< \frac13$ since then the Hamiltonian
does not make sense (see a formal argument in \cite{KZ}).
For the sake of completeness, we present here a rigorous proof.

\begin{theorem} \label{NEsolitarywave}
Assume that $0<\alpha \le\frac13$. Then \eqref{solitarywave} does not possesses any nontrivial solution $Q_c$ in the class 
$H^{\frac{\alpha}2}(\mathbb R) \cap L^3(\mathbb R)$\footnote{This implies that the Hamiltonian is well defined.}. 
\end{theorem}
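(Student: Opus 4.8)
The plan is to run a Pohozaev-type (virial) argument: I would extract two integral identities satisfied by any solution $Q_c$ of \eqref{solitarywave} and show they are incompatible when $0<\alpha<\frac13$. The first, ``energy'' identity comes for free by pairing the equation with $Q_c$ itself. Multiplying \eqref{solitarywave} by $Q_c$, integrating over $\mathbb R$, and using $\langle D^{\alpha}Q_c,Q_c\rangle=\|D^{\frac{\alpha}2}Q_c\|_{L^2}^2$, I obtain
\[
\|D^{\frac{\alpha}2}Q_c\|_{L^2}^2+c\|Q_c\|_{L^2}^2=\frac12\int_{\mathbb R}Q_c^3\,dx .
\]
Every term here is finite in the class $H^{\frac{\alpha}2}(\mathbb R)\cap L^3(\mathbb R)$, so this step is immediate.

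The second, Pohozaev identity I would derive from the scale invariance of the action $S(Q)=\frac12\|D^{\frac{\alpha}2}Q\|_{L^2}^2+\frac{c}2\|Q\|_{L^2}^2-\frac16\int_{\mathbb R}Q^3\,dx$, whose Euler--Lagrange equation is exactly \eqref{solitarywave}. Writing $Q_c^\lambda(x)=Q_c(\lambda x)$ and using the homogeneities $\|D^{\frac{\alpha}2}Q_c^\lambda\|_{L^2}^2=\lambda^{\alpha-1}\|D^{\frac{\alpha}2}Q_c\|_{L^2}^2$, $\|Q_c^\lambda\|_{L^2}^2=\lambda^{-1}\|Q_c\|_{L^2}^2$, and $\int(Q_c^\lambda)^3=\lambda^{-1}\int Q_c^3$, I would differentiate $\lambda\mapsto S(Q_c^\lambda)$ at $\lambda=1$. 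Since $Q_c$ is a critical point, $\frac{d}{d\lambda}S(Q_c^\lambda)\big|_{\lambda=1}=\langle S'(Q_c),x\partial_xQ_c\rangle=0$, which yields
\[
(1-\alpha)\|D^{\frac{\alpha}2}Q_c\|_{L^2}^2+c\|Q_c\|_{L^2}^2=\frac13\int_{\mathbb R}Q_c^3\,dx .
\]

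Subtracting the two identities eliminates the cubic term and gives $\int_{\mathbb R}Q_c^3\,dx=6\alpha\|D^{\frac{\alpha}2}Q_c\|_{L^2}^2$; substituting this back into the energy identity produces the clean relation
\[
c\|Q_c\|_{L^2}^2=(3\alpha-1)\,\|D^{\frac{\alpha}2}Q_c\|_{L^2}^2 .
\]
For $0<\alpha<\frac13$ we have $3\alpha-1<0$, so the right-hand side is $\le 0$ while the left-hand side is $\ge 0$ (because $c>0$). Hence both sides vanish; in particular $\|D^{\frac{\alpha}2}Q_c\|_{L^2}=0$, which forces $Q_c\equiv 0$ since a nonzero constant does not lie in $L^2$. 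This contradicts nontriviality and proves the theorem.

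The main obstacle is the rigorous justification of the Pohozaev identity at the low regularity $H^{\frac{\alpha}2}\cap L^3$: the scaling computation is only formal unless one knows that $\lambda\mapsto S(Q_c^\lambda)$ is differentiable and that $x\partial_xQ_c$ is an admissible variation for which the pairing $\langle S'(Q_c),x\partial_xQ_c\rangle$ genuinely vanishes (no boundary or commutator defect). I would secure this by first bootstrapping regularity and decay directly from the equation: since $D^{\alpha}Q_c=\frac12Q_c^2-cQ_c$ with $Q_c^2\in L^{3/2}$ and $Q_c\in L^2\cap L^3$, each application of the equation gains $\alpha$ derivatives, and iterating upgrades $Q_c$ to a smooth, decaying function. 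With this in hand the Pohozaev identity can be obtained rigorously either by verifying it on the dilations $Q_c(\lambda\cdot)$ and passing to the limit, or equivalently by testing \eqref{solitarywave} against a truncated version of $x\partial_xQ_c$ and controlling the nonlocal term through its Fourier representation $\int_{\mathbb R}x\partial_xQ_c\,D^{\alpha}Q_c\,dx=\frac{\alpha-1}2\|D^{\frac{\alpha}2}Q_c\|_{L^2}^2$. Once the two identities are established, the remaining arithmetic is elementary.
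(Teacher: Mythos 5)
Your proof is correct and follows essentially the same route as the paper: the energy identity from pairing with $Q_c$, plus the Pohozaev identity obtained (in the paper) by multiplying by $xQ_c'$ and using the Fourier identity $\int(D^{\alpha}\phi)x\phi'\,dx=\frac{\alpha-1}{2}\int|D^{\frac{\alpha}2}\phi|^2\,dx$ — which is exactly the pairing $\langle S'(Q_c),x\partial_xQ_c\rangle=0$ you derive by scaling the action — leading to the same final relation $c\|Q_c\|_{L^2}^2=(3\alpha-1)\|D^{\frac{\alpha}2}Q_c\|_{L^2}^2$ and the same sign contradiction. Your remarks on justifying the virial identity at low regularity go beyond what the paper records, but the core argument is identical.
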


\begin{proof} Fix $0<\alpha<1$ and $c>0$. Let $Q_c$ be a nontrivial solution of \eqref{solitarywave} in the class $H^{\frac{\alpha}2}(\mathbb R) \cap L^3(\mathbb R)$.

On the one hand, we multiply \eqref{solitarywave} by $Q_c$ and integrate over $\mathbb R$ to deduce that 
\begin{equation} \label{NEsolitarywave1}
\int_{\mathbb R}|D^{\frac{\alpha}2}Q_c|^2dx+c\int_{\mathbb R}Q_c^2dx=\frac12\int_{\mathbb R}Q_c^3dx.
\end{equation}

On the other hand, we multiply \eqref{solitarywave} by $xQ_c'$, integrate over $\mathbb R$ and integrate by parts to deduce that 
\begin{equation} \label{NEsolitarywave2}
\int_{\mathbb R}(D^{\alpha}Q_c)xQ_c'dx-\frac{c}2\int_{\mathbb R}Q_c^2=-\frac16\int_{\mathbb R}Q_c^3dx.
\end{equation}
Moreover, we will need the following identity, stated in the proof of Lemma 3 in \cite{KMR},
\begin{equation} \label{NEsolitarywave3}
\int_{\mathbb R}(D^{\alpha}\phi)x\phi'dx=\frac{\alpha-1}2\int_{\mathbb R}|D^{\frac{\alpha}2}\phi|^2dx,
\end{equation}
for all $\phi \in \mathcal{S}(\mathbb R)$. Hence, it follows gathering \eqref{NEsolitarywave2} and \eqref{NEsolitarywave3} that 
\begin{equation} \label{NEsolitarywave4}
(\alpha-1)\int_{\mathbb R}|D^{\frac{\alpha}2}Q_c|^2dx-c\int_{\mathbb R}Q_c^2=-\frac13\int_{\mathbb R}Q_c^3dx.
\end{equation}

Now, we briefly recall the proof of \eqref{NEsolitarywave3} for the sake of completeness. We have by using Plancherel's identity, basic properties of the Fourier transform and integrations by parts that 
\begin{displaymath} 
\begin{split} 
\int_{\mathbb R}(D^{\alpha}\phi)x\phi'dx&=-\int_{\mathbb R}(D^{\alpha}\phi)^{\wedge}(\xi)\frac{d}{d\xi}\overline{(\xi\widehat{\phi}(\xi))}d\xi \\ &=-\int_{\mathbb R}|\xi|^{\alpha}|\widehat{\phi}(\xi)|^2d\xi-\int_{\mathbb R}|\xi|^{\alpha}\xi\widehat{\phi}(\xi)\frac{d}{d\xi}\overline{\widehat{\phi}(\xi)}d\xi \\ & 
=\alpha\int_{\mathbb R}|\xi|^{\alpha}|\widehat{\phi}(\xi)|^2d\xi+\int_{\mathbb R}|\xi|^{\alpha}\xi\frac{d}{d\xi}\widehat{\phi}(\xi)\overline{\widehat{\phi}(\xi)}d\xi \\ & =\alpha\int_{\mathbb R}|D^{\frac{\alpha}2}\phi|^2dx-\int_{\mathbb R}\frac{d}{dx}(x\phi)D^{\alpha}\phi dx \\ & =(\alpha-1)\int_{\mathbb R}|D^{\frac{\alpha}2}\phi|^2dx-\int_{\mathbb R}(D^{\alpha}\phi)x\phi'dx,
\end{split}
\end{displaymath} 
which yields identity \eqref{NEsolitarywave3}. 

Finally, we conclude the proof of Theorem \ref{NEsolitarywave}. In the case $\alpha=\frac13$, we deduce from \eqref{NEsolitarywave1} and \eqref{NEsolitarywave4} that $\int_{\mathbb R}Q_c^2dx=0$ which is absurd. In the case $0<\alpha<\frac13$, we obtain combining \eqref{NEsolitarywave1} and \eqref{NEsolitarywave4} that 
\begin{displaymath} 
\frac12\int_{\mathbb R}|D^{\frac{\alpha}2}Q_c|^2dx=\frac{c}{3\alpha-1}\int_{\mathbb R}Q_c^2dx<0, 
\end{displaymath}
which is also a contradiction. 
 \end{proof}
 
\begin{remark} It is not difficult to see from the proof that Theorem \ref{NEsolitarywave} still holds true in the case $\alpha<0$ if one assumes that $Q_c \in \dot{H}^{\frac{\alpha}2}(\mathbb R) \cap L^3(\mathbb R) \cap L^2(\mathbb R).$ In particular, no solitary waves exist when $\alpha=-\frac{1}{2}.$ Note that this dispersion is that of the Whitham equation for large frequencies. On the other hand, the Whitham equation does possess solitary waves, as proven in \cite{EGW} by using that it behaves as the KdV equation for small frequencies.
 \end{remark}

\begin{remark}
Zaitsev \cite{Z} has proved the existence of localized solitary waves of velocity $0<c<\epsilon^{-2}/3$ of  \eqref{Whit} in the case where $p(\xi)=\frac{\xi^2}{1+\epsilon\xi^2}.$ 

\end{remark}

The existence of finite energy solitary waves  when  $\alpha>\frac{1}{3}$  has been addressed in  \cite{FL}, \cite{F} for the more general class of nonlocal equations in $\R^n$

\begin{equation}\label{frac}
(-\Delta)^{\frac{\alpha}{2}}u+u-u^{p+1}=0.
\end{equation}

In what follows we will consider only the one-dimensional case, $n=1.$

The solitary waves are obtained following Weinstein classical approach by looking for the best constant  $C_{p,\alpha}$in the Gagliardo-Nirenberg inequality

\begin{equation}\label {GN}
\int_\R |u|^{p+2}\leq C_{p,\alpha}\left(\int_\R|D^{\alpha/2}u|^2\right)^{\frac{p}{2\alpha}}\left(\int_\R |u|^2\right)^{\frac{p}{2\alpha}(\alpha-1)+1}, \quad \alpha\geq \frac{p}{p+2}. 
\end{equation}

This amounts to minimize the functional

\begin{equation}\label {MW}
J^{p,\alpha}(u)=\frac{\left(\int_\R|D^{\alpha/2}u|^2\right)^{\frac{p}{2\alpha}}\left(\int_\R |u|^2\right)^{\frac{p}{2\alpha}(\alpha-1)+1}}{\int_\R |u|^{p+2}}.
\end{equation}

\vspace{0.5cm}
In our setting, that is   with $p=1$ and one obtains (see \cite {FL} and the references therein):

\begin{theorem}\label {ESW}
Let $\frac{1}{3}<\alpha <1.$ Then

(i) Existence: There exists a solution $Q\in H^{\frac{\alpha}2}(\R)$ of equation \eqref{solitarywave} such that $Q = Q(|x|) > 0$ is even, positive and strictly decreasing in $|x|$. Moreover, the function $Q\in H^{\frac{\alpha}2}(\R) $ is a minimizer for  $J^{p,\alpha}$.

(ii) Symmetry and Monotonicity: If $Q \in H^{\frac{\alpha}2}(\R)$  is a nontrivial solution of \eqref{solitarywave} with  $Q\geq 0$, then there exists $x_0\in \R$ such that $Q(\cdot -0)$ is an even, positive and strictly decreasing in $|x-x_0|.$

(iii) Regularity and Decay: If  $Q\in H^{\frac{\alpha}2}(\R)$ solves \eqref{solitarywave}, then $Q \in H^{\alpha+1}(\R).$
Moreover, we have the decay estimate
$|Q(x)| + |xQ'(x)| \leq \frac{C}{1+|x|^{1+\alpha}},$
for all $x\in \R$ and  some constant $C >0$.
\end {theorem}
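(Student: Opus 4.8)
The plan is to follow Weinstein's variational method for part (i), the moving plane method for part (ii), and a bootstrap on the integral form of the equation for part (iii), exactly as in the references \cite{FL}, \cite{F}.

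\emph{Existence.} For (i) I would first set
$$ m_\alpha=\inf_{u\in H^{\alpha/2}(\R)\setminus\{0\}}J^{1,\alpha}(u), $$
and check $0<m_\alpha<\infty$. Finiteness (equivalently the Gagliardo--Nirenberg inequality \eqref{GN}) follows for $\alpha>\frac13$ from the subcritical embedding $H^{\alpha/2}(\R)\hookrightarrow L^3(\R)$ together with interpolation, and positivity is clear since $J^{1,\alpha}>0$; note that the strict inequality $\alpha>\frac13$ is precisely what makes this embedding subcritical. A direct computation shows $J^{1,\alpha}$ is invariant under the dilation $u\mapsto u(\lambda\,\cdot)$ and the amplitude scaling $u\mapsto\mu u$, so a minimizing sequence $(u_n)$ may be normalized to $\|u_n\|_{L^2}=\|D^{\alpha/2}u_n\|_{L^2}=1$. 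Replacing each $u_n$ by its symmetric decreasing rearrangement preserves $\|u_n\|_{L^2}$ and $\|u_n\|_{L^3}$ while the fractional P\'olya--Szeg\H{o} inequality only lowers $\|D^{\alpha/2}u_n\|_{L^2}$, so we may assume the $u_n$ are nonnegative, even and nonincreasing in $|x|$. On this symmetric class the embedding into $L^3$ is compact (the forced decay rules out the vanishing and dichotomy scenarios of concentration--compactness), whence $u_n\to Q$ strongly in $L^3$ and weakly in $H^{\alpha/2}$, and lower semicontinuity yields a minimizer $Q$. The Euler--Lagrange equation for $J^{1,\alpha}$ at $Q$ reads $D^\alpha Q+aQ=bQ^2$ with $a,b>0$; rescaling $Q$ in amplitude and in space normalizes $a,b$ and produces, for any prescribed $c>0$, the even positive strictly decreasing solution of \eqref{solitarywave}.

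\emph{Symmetry and monotonicity.} Part (ii) is the crux, and I expect it to be the main obstacle, because $D^\alpha$ is nonlocal and the classical moving plane argument does not apply directly. The plan is to localize the problem through the Caffarelli--Silvestre extension: since $0<\alpha<1$ we have $D^\alpha=(-\Delta)^{\alpha/2}$, which is the Dirichlet--Neumann map of the degenerate-elliptic equation $\mathrm{div}(y^{1-\alpha}\nabla w)=0$ on the half-plane $\R^2_+$ with trace $Q$. Equation \eqref{solitarywave} then becomes a local nonlinear boundary condition, to which a moving plane / sliding argument based on the maximum principle for the weighted operator can be applied. Comparing $Q$ with its reflection across hyperplanes and sliding the plane to first contact gives symmetry about some $x_0$, and the strong maximum principle together with $Q>0$ upgrades this to strict monotonicity in $|x-x_0|$; controlling the sign of the reflected difference uses the positivity of the extension kernel in an essential way.

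\emph{Regularity and decay.} For (iii) I would write \eqref{solitarywave} in the integral form $Q=(D^\alpha+c)^{-1}\big(\tfrac12 Q^2\big)$ and bootstrap. Since the multiplier $(|\xi|^\alpha+c)^{-1}$ behaves like $|\xi|^{-\alpha}$ at infinity, $(D^\alpha+c)^{-1}$ gains $\alpha$ derivatives; a preliminary $L^p$ iteration first lifts $Q$ from $H^{\alpha/2}\cap L^3$ into $L^\infty$, after which the product estimate \eqref{Kato-Ponce2} with the $L^\infty$ factor gives $Q^2\in H^\sigma$ whenever $Q\in H^\sigma\cap L^\infty$, and inverting raises $Q$ to $H^{\sigma+\alpha}$. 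Iterating the gain of $\alpha$ derivatives reaches the stated regularity $Q\in H^{\alpha+1}(\R)$ (indeed any $H^\sigma$). For the decay, the convolution kernel $K$ of $(D^\alpha+c)^{-1}$ satisfies $0<K(x)\lesssim(1+|x|)^{-(1+\alpha)}$ and $|K'(x)|\lesssim(1+|x|)^{-(2+\alpha)}$; feeding the faster decay of $Q^2$ into $Q=K\ast\big(\tfrac12 Q^2\big)$ and iterating forces $Q$ to inherit the tail of the kernel, giving $|Q(x)|\lesssim(1+|x|)^{-(1+\alpha)}$, while differentiating the same identity and multiplying by $x$ yields $|xQ'(x)|\lesssim(1+|x|)^{-(1+\alpha)}$, which is the claimed estimate.
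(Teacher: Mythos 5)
The paper does not prove Theorem \ref{ESW}: it is stated as a survey of results from \cite{FL} and \cite{F} (``one obtains (see \cite{FL} and the references therein)''), so there is no in-paper proof to compare against. Your outline is consistent with the strategy actually used in those references --- Weinstein-type minimization of $J^{1,\alpha}$ with symmetric decreasing rearrangement and the fractional P\'olya--Szeg\H{o} inequality for (i), moving planes through the Caffarelli--Silvestre extension (with weight $y^{1-\alpha}$, i.e.\ $s=\alpha/2$) for (ii), and a bootstrap on $Q=(D^{\alpha}+c)^{-1}(\tfrac12 Q^2)$ together with the kernel bound $0<K(x)\lesssim (1+|x|)^{-(1+\alpha)}$ for (iii). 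Two points remain at the level of assertion rather than proof: the strict (not merely nonincreasing) monotonicity of the rearranged minimizer in (i), which in \cite{FL} is extracted from the equation and the strict positivity of the resolvent kernel rather than from the rearrangement itself; and the full execution of the moving plane argument for a general nonnegative solution in (ii), which you correctly identify as the crux. As a proof sketch matching the cited literature it is sound; as a self-contained proof it would need those steps filled in.
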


\begin{remark}
Contrary to the case of the KdV equation the solitary wave cannot decay fast (for instance exponentially) because the symbol $i\xi|\xi|^{\alpha}$ of the dispersive term is not smooth at the origin when $\alpha$ is not an even integer.
\end{remark}

Uniqueness issues have been addressed in  \cite{FL}, \cite{F} for the class of nonlocal equations \eqref{frac}. They concern {\it ground states solutions} according to the following definition (see \cite {FL})
\begin{definition}
 Let $Q \in H^{\frac{\alpha}2}(\R)$ be an even and positive solution of \eqref {frac} . If  $$J^{(p,\alpha)}(Q)=\inf \big\{ J^{(p,\alpha)}(u) \ : \ 
 u\in H^{\frac{\alpha}2} (\R)\setminus \lbrace0\rbrace \big\},$$
then we say that $Q$  is a ground state solution. 
\end{definition}
The main  result in \cite{FL} implies in our case ($p=1$) that the ground state is unique when $\alpha >\frac{1}{3}.$

Observe that the uniqueness (up to the trivial symmetries) of the solitary-waves of the Benjamin-Ono solutions has been established in \cite{AT}.

Note that the method of  proof of Theorem \ref{ESW} does not yields any (orbital) stability result. One has to use instead a variant of the Cazenave-Lions method, that is obtain the solitary waves by minimizing the Hamiltonian with fixed $L^2$ norm. This has been done in \cite{ABS} in the case $\alpha =1$ 

If $\frac12 \le \alpha <1,$ one has then to obtain solutions of \eqref{solitarywave} by solving the minimization problem
\begin{equation}\label{var}
\min \lbrace H(u) \ : \  \|u\|_{L^2}=1\rbrace.
\end{equation}
As we previously noticed, results in that direction are obtained in \cite{EGW} where a conditional orbital stability result is given for the original Whitham equation, using in a crucial way that it reduces to the KdV equation in the long wave limit.

\vspace{0.5cm}
On the other hand, it has been established in \cite{KaSt} that the ground state is spectrally stable when $\alpha>\frac{1}{2}.$

No asymptotic results seem to be known (see \cite{KM} for the case of the Benjamin-Ono equation, $\alpha =1$).

\begin{remark}\label{per}
The existence and stability properties of {\it periodic} solitary waves of \eqref{dispBurgers} when $\alpha>\frac{1}{2}$ is studied in \cite {J}.
\end{remark}

\subsection{Long time existence issues}
\vspace{0.5cm}
 An important issue for the rigorous justification of nonlinear dispersive equations as asymptotic models of more complicated systems such as the water waves system, nonlinear Maxwell equations (see for instance \cite{La} in the context of water waves) is the influence of dispersion on the lifespan of solutions to dispersive perturbations of  hyperbolic quasilinear equations or systems which typically arise in water waves theory.
Typically, those systems write

\begin{equation}\label{basic}
\partial_t U+\mathcal{B}U+\epsilon\; \mathcal A(U, \nabla U)+\epsilon \mathcal LU=0,
\end{equation}
where the order $0$ part  $\partial_t U+\mathcal{B}U$ is linear hyperbolic, $ \mathcal L$ being a linear (not necessarily skew-adjoint) dispersive operator and $\epsilon>0$ is a small parameter which measures the (comparable) nonlinear and dispersive effects. Both the linear part and the dispersive part may involves nonlocal terms (see {\it eg} \cite{X}, \cite{SX2}).

Boussinesq systems for surface water waves are a classical example of such systems. \footnote{Note however that the Boussinesq systems \eqref{Bsq} cannot be reduced exactly to the form \eqref {basic} except when $b=c=0.$ Otherwise the presence of a "BBM like " term induces a smoothing effect on one or both nonlinear terms.}

When $ \mathfrak L=0$ one has a quasilinear hyperbolic system and if it is symmetrizable one obtains a lifespan of order $1/\epsilon$ for the solutions of the associated Cauchy problem. 

On the other hand, even when the nonlinear part is symmetrizable and  $ \mathfrak L$ skew-adjoint, the existence on time scales of order $1/\epsilon$ (and actually even the local-well-posedness) is not obvious since the action of the symmetrizer on the dispersive part leads to derivative losses and the energy method does not work in a straightforward way.

A basic question (in particular to justify the validity of \eqref{basic} as an asymptotic model) is thus to prove that the life span of the solution of \eqref{basic} is at least $1/\epsilon$ and to investigate whether or not  this life span is increased by the presence of the dispersive term $\epsilon \mathcal L.$

For scalar (physically relevant) equations the second  question is trivial since they appear most often  as skew-adjoint perturbations of conservation laws of the form 
(after eliminating the transport term by a trivial change of variable)
\begin{equation}\label {triv1}
u_t+\epsilon f(u)_x-\epsilon Lu_x=0,
\end{equation}
for which existence on time scales of order $1/\epsilon$ is trivial.  Actually, whatever the dispersive term $L$ one has the dichotomy: either the solution is global, either its life span has order $0(1/\epsilon),$ as immediately seen by the change of the time variable $\tau=\epsilon t$ which reduces \eqref{triv1} to

\begin{equation}\label {triv2}
u_\tau+ f(u)_x- Lu_x=0,
\end{equation}

This question is not so elementary with a different scaling. A toy model will be again the dispersive Burgers equation written now on the form 
\begin{equation} \label{dispBurgerseps}
\partial_tu+ D^{\alpha}\partial_xu= \epsilon u\partial_xu,\quad u(\cdot,0)=u_0.
\end{equation}
Setting $v=\epsilon u$, this is equivalent to solving
\begin{equation} \label{dispBurgersbis}
\partial_tv+ D^{\alpha}\partial_xv=  v\partial_xv,\quad v(\cdot,0)=\epsilon u_0.
\end{equation}

Our main concern here is to prove the existence of strong solutions to \eqref{dispBurgerseps} defined on time intervals of length  greater than $1/\epsilon$, for small $\epsilon>0$ and 
$-1\leq\alpha\leq 1/2$, $\alpha \neq 0$. Observe that we have hyperbolic blow-up on time $T \sim 1/\epsilon$ in the case $\alpha=0$, which is nothing else than the Burgers equation.

This question is not a simple one, as shows the related example of the {\it Burgers-Hilbert equation}  which corresponds to $\alpha =-1$.
\begin{equation}\label{BH}
u_t+\epsilon uu_x+\mathcal Hu=0, \quad u(\cdot, 0)=u_0,
\end{equation}
where $\mathcal H$ is the Hilbert transform. 

In fact, Hunter and Ifrim \cite{HI} (see a different proof in \cite{HITW}) have shown the rather unexpected result :
\begin{theorem}
 Suppose that $u_0\in  H^2(\R)$. There are constants $k > 0$ and $\epsilon _0 > 0$, depending only on $\|u_0\|_{H^2}$ , such that for every $\epsilon$  with $|\epsilon|\leq  \epsilon_0,$ there exists a solution
$u \in C(I_{\epsilon}; H^2 (\R)) \cap C^1(I_{\epsilon}; H^1 (\R))$
of \eqref{BH}  defined on the time-interval $I_{\epsilon} = \lbrack-k/\epsilon ^2,k/\epsilon ^2\rbrack$ .
\end{theorem}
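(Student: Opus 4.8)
The plan is to reduce the theorem to a single \emph{a priori} bound and to gain the extra factor $1/\epsilon$ in the lifespan by exploiting the complete absence of quadratic resonances. First I would record that standard energy/symmetrizer methods produce a local solution $u\in C([-T_\epsilon,T_\epsilon];H^2(\R))\cap C^1([-T_\epsilon,T_\epsilon];H^1(\R))$: since $\mathcal H$ is a bounded skew-adjoint operator and the quadratic term $\epsilon u\partial_x u$ already carries the small parameter, the crude energy inequality $\frac{d}{dt}\|u\|_{H^2}^2\lesssim \epsilon(1+\|u\|_{H^2})^3$ gives $T_\epsilon\gtrsim 1/\epsilon$. By the usual continuation argument, the whole theorem follows once one proves an \emph{a priori} bound $\sup_{|t|\le k/\epsilon^2}\|u(t)\|_{H^2}\le C(\|u_0\|_{H^2})$; the estimates below are insensitive to the sign of $t$, which yields the two-sided interval $I_\epsilon$, and the equation $\partial_tu=-\mathcal Hu-\epsilon u\partial_xu\in C(I_\epsilon;H^1)$ furnishes the $C^1$ regularity.

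The structural heart of the matter is the behaviour of the linear flow. The propagator of $\partial_t+\mathcal H$ multiplies the $\xi$-Fourier mode by $e^{it\,\mathrm{sgn}(\xi)}$, oscillating at the \emph{bounded} frequency $\pm1$. Passing to the interaction variable $w=e^{t\mathcal H}u$, equation \eqref{BH} becomes $\partial_tw=-\epsilon\,N_t(w,w)$, where the bilinear form $N_t$ has a Fourier kernel carrying the oscillatory factor $e^{it\Omega(\xi,\eta)}$ with resonance function
\[
\Omega(\xi,\eta)=\mathrm{sgn}(\xi)-\mathrm{sgn}(\eta)-\mathrm{sgn}(\xi-\eta).
\]
A direct check of the four sign cases shows $\Omega=\pm1$ almost everywhere, so the quadratic interaction is completely nonresonant. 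This uniform lower bound on $|\Omega|$ is precisely the mechanism that upgrades the naive $1/\epsilon$ lifespan to $1/\epsilon^2$.

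I would then implement this gain through a modified energy (equivalently, a normal-form transformation). Starting from $\frac{d}{dt}\|u\|_{H^2}^2=\epsilon\,Q(u)$, where $Q$ is a cubic form whose top-order part, after integrating by parts, is controlled by $\|\partial_xu\|_{L^\infty}\|u\|_{H^2}^2\lesssim\|u\|_{H^2}^3$, I solve the cohomological equation $\mathcal LR=Q$, with $\mathcal L$ the derivation along the linear flow. In Fourier $\mathcal L$ multiplies the trilinear symbol of $R$ by $i\Omega$, so $R$ has symbol $(\text{symbol of }Q)/(i\Omega)$, which is well defined and of the \emph{same order} as $Q$ because $\Omega$ is bounded away from $0$. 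The modified energy $E(u)=\|u\|_{H^2}^2-\epsilon R(u)$ then satisfies $\frac{d}{dt}E(u)=\epsilon Q-\epsilon\mathcal LR-\epsilon^2(\cdots)=O(\epsilon^2)(1+\|u\|_{H^2})^4$, since the only surviving terms come from feeding the $O(\epsilon)$ nonlinear part of $\partial_tu$ into the cubic form $R$. As $|\epsilon R(u)|\lesssim\epsilon\|u\|_{H^2}^3$, for $\epsilon$ small $E$ is comparable to $\|u\|_{H^2}^2$ on bounded sets, and Gronwall's inequality closes the bound on $|t|\le k/\epsilon^2$.

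The hard part is carrying out this normal form \emph{exactly} at the $H^2$ level. Because $u\partial_xu$ loses one derivative, $Q$ distributes four derivatives over three factors, and one must verify, using integration by parts together with the balanced null-type structure of the nonlinearity, that both $R$ and the nonlinear contribution to $\frac{d}{dt}R$ are genuinely controlled by $\|u\|_{H^2}^3$ and $\|u\|_{H^2}^4$ without ever appealing to more than two derivatives; the division by the $O(1)$ factor $\Omega$ is what guarantees no regularity is lost in forming $R$. The discontinuity of $\mathrm{sgn}(\xi)$ at $\xi=0$ renders the multipliers merely bounded rather than smooth, but since this only affects the low-frequency region, where no derivatives are at stake, it is harmless. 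Finally, uniqueness is immediate from the $L^2$ energy estimate for the difference of two solutions.
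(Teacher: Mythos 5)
You should first be aware that the paper does not actually prove this statement: it is Hunter and Ifrim's theorem on the Burgers--Hilbert equation, quoted with references to \cite{HI} (proof by a normal form transformation) and \cite{HITW} (proof by a modified energy), so there is no in-paper argument to compare against. Your proposal is, in outline, the modified-energy route of \cite{HITW}, and its structural skeleton is correct: the propagator of $\partial_t+\mathcal{H}$ oscillates at the bounded frequency $\pm 1$; on the convolution constraint the phase $\mathrm{sgn}\,\xi_1+\mathrm{sgn}\,\xi_2+\mathrm{sgn}\,\xi_3$ (equivalently your $\Omega$) is $\pm1$ almost everywhere, so the quadratic interaction is totally nonresonant; dividing the cubic symbol by this unimodular factor amounts to sprinkling Hilbert transforms over the three factors, so the correction $R$ is a trilinear form of the same order as $Q$ with $|R(u)|\lesssim\|u\|_{H^2}^3$ (using Sobolev embedding rather than $L^\infty$-boundedness of $\mathcal{H}$, which fails); and the reduction of the two-sided lifespan $k/\epsilon^2$ to an a priori $H^2$ bound via local existence and continuation is standard.

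The genuine gap is the quartic estimate $\bigl|\tfrac{d}{dt}R(u)\bigr|\lesssim\epsilon\,(1+\|u\|_{H^2})^4$ along the nonlinear flow, which you assert follows from ``integration by parts together with the balanced null-type structure.'' As stated this does not close. Substituting $u\partial_xu$ into the top-order slot of $R$ produces terms carrying three derivatives on a single factor, schematically $\int u\,\partial_xu\,\mathcal{H}\partial_x^3u\,\partial_x^2u\,dx$. For the unmodified cubic $Q$ the analogous term is handled by the symmetrization $\int f\,g\,\partial_xg\,dx=-\tfrac12\int\partial_xf\,g^2\,dx$, but once a Hilbert transform sits on the highest-order factor (and it must, since $1/\Omega=\Omega$ is built from sign functions) that identity is unavailable, and one needs genuine Calder\'on-type commutator bounds such as $\|[\mathcal{H},f]\partial_xg\|_{L^2}\lesssim\|\partial_xf\|_{L^\infty}\|g\|_{L^2}$ to move the extra derivative off the top factor. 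Identifying and exploiting this commutator structure is precisely the technical content of \cite{HI} and \cite{HITW}; without it your argument stalls exactly at the step you flag as ``the hard part,'' so the proposal is a correct strategy with its decisive estimate missing rather than a complete proof.
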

\vspace{0.3cm}

\begin{remark}
It would be interesting to consider a similar issue for the dispersive Burgers equation \eqref{dispBurgersbis} when $-1<\alpha\leq 1/2$,  $\alpha\neq0.$ One might think of using the 
dispersion, as in the normal form approach (in a different context), see \cite{GMS1, GMS2, GMS3, G} . This will be carried out in a subsequent paper.
\end{remark}

\begin{remark}
Although outside the range of equations studied here, we would like to mention the case $\alpha =-2$ which corresponds to the so-called reduced Ostrowsky equation and for which 
it has been proven in \cite{GP} the existence of global solutions under the following conditions on the initial data $u_0$: $u_0 \in H^3(\mathbb R)$ and $1-3u_0''>0$.  It is interesting 
to note however that an \lq\lq hyperbolic\rq\rq \, blow-up may occur otherwise (\textit{c.f.} Theroem 2 in \cite{GP}).
 \end{remark}

The long time existence issue is specially important to justify rigorously (as asymptotic models)  physically relevant systems such as the Boussinesq systems
\begin{equation} \label{Bsq}
\left\{ \begin{array}{l}   \partial_t\eta+\text{div}\,
\textbf{v}+\epsilon\;\text{div} \, (\eta\textbf{v})+\epsilon( a\;\text{div}\Delta \textbf{v}-b\Delta \eta_t)=0 \\
\partial_t \textbf{v} +\nabla \eta +\epsilon\frac12 \nabla(|\textbf{v}|^2)+\epsilon(c\nabla \Delta
\eta-d\Delta {\bf v}_t)=0 \end{array} \right., \quad (x_1,x_2) \in \mathbb R^2, \ t \in
\mathbb R.
\end{equation}
where $a,b,c,d$ are modelling constants satisfying the constraint $a+b+c+d=\frac{1}{3}$ and ad hoc conditions implying that the well-posedness of  linearized system at the trivial solution $(0,{\bf 0}).$

It has been proven in \cite{SX} (see also \cite{MSZ} and \cite{X, SX2} for another water wave system) that \eqref{Bsq} is well-posed on time scales of order $1/\epsilon$  (with uniform bounds). The method is \lq\lq hyperbolic\rq\rq\, in spirit and works for all the physically admissible Boussinesq systems except the more dispersive one, of \lq\lq KdV-KdV" type
\begin{equation} \label{KdV type}
\left\{ \begin{array}{l}   \partial_t\eta+\text{div}\,
\textbf{v}+\epsilon\;\text{div} \, (\eta\textbf{v})+\epsilon\;\text{div}
\,
\Delta \textbf{v}=0 \\
\partial_t \textbf{v} +\nabla \eta +\epsilon\frac12 \nabla(|\textbf{v}|^2)+\epsilon\;\nabla \Delta
\eta=0 \end{array} \right., \quad (x_1,x_2) \in \mathbb R^2, \ t \in
\mathbb R,
\end{equation}
and for the two-dimensional regularized (BBM) version of the original Boussinesq system.
For \eqref{KdV type} it was proven in \cite{LPS} by using dispersive estimates that the existence time is $O(1/\sqrt \epsilon).$

\begin{remark}
The discrepancy  between the results above can be explained as follows. The proofs using dispersion (that is high frequencies)  do not take into account the algebra (structure)  of the nonlinear terms. They allow initial data in relatively large Sobolev spaces but seem to give only existence times of order $O(1/\sqrt \epsilon).$
 The existence proofs on existence times of order $1/\epsilon$  are of \lq\lq hyperbolic\rq\rq \, nature. They do not take into account the dispersive effects (treated as perturbations). Is it possible to go till $O(1/\epsilon^2),$ or to get global existence? This is plausible in one dimension (the Boussinesq systems should evolves into an uncoupled system of KdV equations see \cite{SW}) but not so clear in two dimensions.\end{remark}

\subsection{Blow-up issues}

\vspace{0.5cm}

We have already mention briefly the possibility of blow-up in finite time for equations like \eqref{dispBurgers} in the case of very weak dispersion ($-1<\alpha<0).$

Actually three different types of blow-up, arising from different phenomena, could occur for \eqref{dispBurgers}.

(i) \lq\lq Hyperbolic\rq\rq \  blow-up, that is blow-up of the gradient, the solution remaining bounded. This is a typical property of scalar conservation laws and it is not likely to occur when $\alpha >0 $ according to the formal argument in \cite{KZ} and the numerical simulations in \cite{KS}.

As already mentioned, a blow-up of this type has been proven for Whitham type equations with a very weak dispersion. This question though is open when $0<\alpha <1,$ one does not even know in this case if  a control on the $L_x^{\infty}$ norm of the solution prevents blow-up as it is the case for the generalized KdV equation, see \cite{ABF}  (this property is of course false for hyperbolic quasilinear equations).



It is interesting to investigate similar issues for weak dispersive perturbations of {\it systems}, for instance for the \lq\lq weakly dispersive\rq\rq \, Boussinesq systems. A good candidate is the system studied by Amick \cite{A} corresponding to
$a=c=b=0$ and $d=\frac{1}{3}$ in \eqref {Bsq} and which was studied  in the 1D case by Amick \cite{A} and Schonbeck \cite{Sc}   as a perturbation of the Saint-Venant system.

Actually they proved  global well-posedness when the  initial data are small, compacted supported, perturbations of constant states. Such initial data leads to gradient blow-up for the underlying Saint-Venant system.

An interesting question is to prove results similar to those of Amick and Schonbeck in the 2D case.

\vspace{0.3cm}
(ii) \lq\lq Dispersive\rq\rq \  blow-up (DBU), or focalization due to the focusing of short or long waves. This phenomenon is typically a linear one (see \cite {BS1, BS2}). Roughly speaking it implies that there exist solutions with smooth, bounded and square integrable initial data that becomes infinite at prescribed points in space-time. One also have solutions starting from decaying, smooth and bounded initial data that can become arbitrary large at prescribed points (see \cite{BS2}).

It is shown in \cite{BS2} that  DBU occurs for the linear fractional Schr\"{o}dinger equations

\begin{equation}\label{fracS}
iu_t+(-\Delta)^{\frac{\alpha}{2}} u=0 \quad \text{in} \;\R^n\times \R.
\end{equation}

 By similar methods (using for instance the asymptotics in \cite{SSS}, one can prove similar results for the {\it linear} equation 
 
 \begin{equation}\label{lindB}
 \partial_t u-D^{\alpha}\partial_x u=0, \quad \alpha >0.
 \end{equation}
For \eqref{lindB} (as for \eqref{fracS} when $\alpha >1$), the DBU is due to the focusing of short waves.
 
 Extending this to the nonlinear equation \eqref{dispBurgers} is an open problem.
 
 \vspace{0.3cm}
(iii) \lq\lq Nonlinear-Dispersive\rq\rq \ blow-up.
This blow-up phenomenum, due to the competition between nonlinearity and dispersion is expected to occur for $L^2$ {\it critical or super-critical} equations such as the generalized Korteweg-de Vries equation (GKdV)

\begin{equation}\label{GKdV}
\partial_t u+u^p\partial_x u+\partial^3_xu=0,
\end{equation}
when $p\geq 4.$
The only known result for GKdV is that of the critical case $p=4$ \cite{MM}\footnote{Note that DBU occurs for all values of $p$ \cite{BS1}.}.  The supercritical case $p>4$ is still open but the numerical simulations in \cite{BDKM} suggest that blow-up occurs in this case too. Recently, Kenig, Martel and Robbiano proved in \cite{KMR} that the same type of blow-up occurs for the critical  equation 
\begin{equation} \label{dispGKdV}
\partial_tu-D^{\alpha}\partial_xu+|u|^{2\alpha}\partial_xu=0
\end{equation}
when $\alpha$ is closed to $2$, \textit{i.e.} near the GKdV equation with critical nonlinearity. Recall that for the dispersive Burgers equation \eqref{dispBurgers}, the critical case corresponds to $\alpha =\frac{1}{2}$  (or $\alpha=\frac12$ for equation \eqref{dispGKdV}).

Things are a bit different for the dispersive Burgers equation \eqref{dispBurgers} equation since in addition to the $L^2$ critical exponent $\alpha =1/2,$ one has the {\it energy critical} exponent $\alpha=1/3$ which has no equivalent for the generalized KdV equations.
As this stage one could conjecture that the Cauchy problem for the dispersive Burgers equation \eqref{dispBurgers} is globally well-posed (in a suitable functional setting) when $\alpha>\frac{1}{2},$ that a blow-up similar to the critical GKdV case, occurs when $\alpha =\frac{1}{2},$ that a \textit{supercritical} blow-up occurs when $\frac{1}{3}\leq \alpha<\frac{1}{2},$ and that a blow-up of a totally nature occurs in the \textit{energy supercritical} case, that is when $0<\alpha<\frac{1}{3}.$This is supported by numerical simulations \cite{KS} but should be difficult to prove.

\vspace{0.5cm}



\subsection{Fractionary BBM equations}

We comment here briefly on the BBM version of the dispersive Burgers equation, namely

\begin{equation}\label{fracBBM}
\partial_tu+\partial_xu+u\partial_xu+D^{\alpha}\partial_tu=0,
\end{equation}
where the operator $D^{\alpha}$ is defined in \eqref{Riesz}.

The case $\alpha=2$ corresponds to the classical BBM equation, $\alpha=1$ to the BBM version of the Benjamin-Ono equation.

For any $\alpha$ the energy

$$E(t)=\int_{\R}(u^2+|D^{\frac{\alpha}{2}}u|^2) dx$$

is formally conserved. By a standard compactness method this implies that the Cauchy problem for \eqref{fracBBM} admits a global weak solution in $L^{\infty}(\R;H^{\frac{\alpha}{2}}(\R))$ for any initial data $u_0=u(\cdot,0)$ in $H^{\frac{\alpha}{2}}(\R).$

One can also use the equivalent form

\begin{equation}\label{fracBBM2}
\partial_t u+\partial_x(I+D^{\alpha})^{-1}\left(u+\frac{u^2}{2}\right)=0,
\end{equation}

which gives the Hamiltonian formulation

$$u_t+J_{\alpha}\nabla_u H(u)=0$$

where the skew-adjoint operator $J_{\alpha}$ is given by $J_{\alpha}=\partial_x(I+D^{\alpha})^{-1}$ and $H(u)=\frac{1}{2}\int_\R (u^2+\frac{1}{3}u^3).$ Note that the Hamiltonian makes  for $u\in H^{\frac{\alpha}{3}}(\R)$ if and only if $\alpha\geq \frac{1}{3}.$

The form \eqref{fracBBM2} shows clearly that the fractionary BBM equation is for $0<\alpha <1$ a kind of "dispersive regularization" of the Burgers equation.
 
\vspace{0.3cm}
We will focus on the case  $0<\alpha <1.$ Actually when $\alpha \geq1,$ \eqref{fracBBM2} is an ODE in the Sobolev space $H^s(\R)$, $s>\frac{1}{2},$ and one obtains by standard arguments (see \cite {Ma, ABS2}) the local well-posedness of the Cauchy problem in $H^s(\R), s>\frac{1}{2}.$ When $\alpha =1$ (the Benjamin-Ono BBM equation), the conservation of energy and an ODE argument as in \cite{S}
 or the Br\' ezis-Gallou\"{e}t inequality (see \cite{ABS}) implies that this local solution is in fact global.

 Things are a bit less simple when $0<\alpha<1$ since \eqref{fracBBM2} is no more an ODE, in any Sobolev space. By  a standard energy method one obtains local well-posedness in $H^s(\R), s>\frac{3}{2}.$  
 
 One can in fact easily improve this result.
 
 \begin{theorem}
 Let $0<\alpha<1.$
 Then the Cauchy problem for \eqref {fracBBM} or \eqref {fracBBM2} is locally well-posed for initial data in $H^r(\R),\; r>r_{\alpha}= \frac{3}{2}-\alpha .$
 \end{theorem}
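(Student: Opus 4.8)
The plan is to run an energy estimate directly on the equivalent nonlocal form \eqref{fracBBM2}, exploiting the extra smoothing carried by $(I+D^\alpha)^{-1}$. Introduce the Fourier multipliers $A=(I+D^\alpha)^{-1}$, which is self-adjoint, positive and of order $-\alpha$, and $T=\partial_x(I+D^\alpha)^{-1}$, whose symbol $i\xi/(1+|\xi|^\alpha)$ is purely imaginary, so that $T$ is skew-adjoint and of order $1-\alpha<1$ at high frequencies. Writing $uu_x=\frac12\partial_x(u^2)$, equation \eqref{fracBBM2} becomes $\partial_tu+Tu+\frac12T(u^2)=0$. The gain over the naive energy method (which only reaches $r>\frac32$, as already noted in the text) will come entirely from the fact that the nonlinear operator $T$ loses $1-\alpha$ derivatives instead of a full derivative.

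First I would establish the a priori estimate for smooth solutions. Set $w=J^ru$ and compute $\frac12\frac{d}{dt}\|w\|_{L^2}^2=-\int w\,Tw\,dx-\frac12\int w\,J^rT(u^2)\,dx$. The linear contribution vanishes since $T$ is skew-adjoint. For the nonlinear term I commute $J^r$ with $T$, peel off the leading piece $J^r(u^2)=2uw+r_1$ (with the remainder $r_1$ controlled by Lemma \ref{Kato-Ponce}), and reduce to the model term $\int (Tw)\,uw\,dx$. Using once more the skew-adjointness of $T$ one gets the exact cancellation $2\int (Tw)\,uw\,dx=-\int w\,[T,u]w\,dx$, which trades the dangerous factor for the commutator $[T,u]$. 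Since $T$ has order $1-\alpha$, this commutator has order $-\alpha$ and obeys $\|[T,u]w\|_{L^2}\lesssim\|D^{1-\alpha}u\|_{L^\infty}\|w\|_{L^2}$, which I would prove by splitting $[T,u]w=A(u_xw)+[A,u]w_x$ and estimating each piece by a paraproduct / fractional-Leibniz argument based on Lemma \ref{LeibnizRule}. Collecting everything yields $\frac{d}{dt}\|u\|_{H^r}\lesssim\|D^{1-\alpha}u\|_{L^\infty}\|u\|_{H^r}$, and the Sobolev embedding $H^r(\R)\hookrightarrow W^{1-\alpha,\infty}(\R)$, valid precisely when $r>\frac32-\alpha$, closes the estimate as $\frac{d}{dt}\|u\|_{H^r}\lesssim\|u\|_{H^r}^2$. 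Gronwall then gives a lifespan $T\gtrsim\|u_0\|_{H^r}^{-1}$ together with a uniform $H^r$ bound.

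With the a priori bound in hand I would construct solutions by the Bona--Smith method: regularize the datum as $u_{0,\epsilon}=\rho_\epsilon\ast u_0$, solve by the classical $H^{3/2+}$ theory to obtain smooth solutions $u_\epsilon$, extend them to a common interval $[0,T]$ using the uniform bound, and show that $\{u_\epsilon\}$ is Cauchy in $C([0,T];H^r(\R))$. For uniqueness and continuous dependence, the difference $v=u_1-u_2$ of two solutions solves $\partial_tv+Tv+\frac12T((u_1+u_2)v)=0$; the same skew-adjointness-plus-commutator argument at the $L^2$ level gives $\frac{d}{dt}\|v\|_{L^2}\lesssim\big(\|D^{1-\alpha}u_1\|_{L^\infty}+\|D^{1-\alpha}u_2\|_{L^\infty}\big)\|v\|_{L^2}$, hence $L^2$-Lipschitz dependence and uniqueness by Gronwall, while the $H^r$-continuity of the flow again follows from the Bona--Smith comparison.

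The main obstacle is the commutator estimate $\|[T,u]w\|_{L^2}\lesssim\|D^{1-\alpha}u\|_{L^\infty}\|w\|_{L^2}$: it is exactly here that the order $1-\alpha$ of $T$ (rather than $1$) is used, and it is what replaces the requirement $\partial_xu\in L^\infty$ (forcing $r>\frac32$) by $D^{1-\alpha}u\in L^\infty$ (allowing $r>\frac32-\alpha$). Everything else is a routine adaptation of the energy and Bona--Smith scheme already employed for \eqref{dispBurgers}.
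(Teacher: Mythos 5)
Your argument is correct in outline and reaches the same threshold $r>\frac32-\alpha$, but it is organized quite differently from the paper's proof. The paper works on the form \eqref{fracBBM} with the modified energy $\int(|J^su|^2+|J^{s+\frac{\alpha}{2}}u|^2)dx$, $r=s+\frac{\alpha}{2}$: the gain of $\alpha$ derivatives is split into two halves, one because the nonlinearity is only tested against $J^s=J^{r-\frac{\alpha}{2}}$, the other because the controlled quantity $J^su$ lies in $H^{\frac{\alpha}{2}}$ and one can spend that extra regularity through the Sobolev embeddings $H^{s+\frac{\alpha}{2}-1}\hookrightarrow L^p$, $H^{\frac{\alpha}{2}}\hookrightarrow L^q$ in the bound for $\int u_x(J^su)^2dx$ and for the Leibniz remainder. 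You instead run the plain $H^r$ energy on \eqref{fracBBM2} and extract the full gain of $\alpha$ from the fact that $T=\partial_x(I+D^{\alpha})^{-1}$ has order $1-\alpha$, via the antisymmetrization identity $2\int(Tw)uw\,dx=-\int w[T,u]w\,dx$. Your route buys a cleaner statement of where the threshold comes from, and your $L^2$-level treatment of uniqueness (Gronwall on $\|v\|_{L^2}$ with the same commutator trick) is simpler than the paper's $D^s$-level difference estimate and yields an $L^2$-Lipschitz bound in addition.

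One caveat deserves attention. The crux of your scheme, $\|[T,u]w\|_{L^2}\lesssim\|D^{1-\alpha}u\|_{L^{\infty}}\|w\|_{L^2}$, does not follow from Lemma \ref{LeibnizRule} as you cite it: in that lemma the $L^{\infty}$ factor is only allowed on the function carrying \emph{zero} derivatives (the case $\sigma_2=0$, $p_2=\infty$), whereas you need the $L^{\infty}$ norm on the factor carrying all $1-\alpha$ derivatives; a paraproduct analysis of the high--low piece produces an $\ell^2$ (Besov $\dot B^{1-\alpha}_{\infty,2}$) sum rather than the sup, so the estimate with the pure $L^{\infty}$ right-hand side is at best delicate. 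This is not fatal: the weaker bound $\|[T,u]w\|_{L^2}\lesssim\|u\|_{H^r}\|w\|_{L^2}$ for $r>\frac32-\alpha$ suffices to close the Gronwall argument, and it does follow from your own splitting $[T,u]w=A(u_xw)+[A,u]\partial_xw$ together with H\"older and the Sobolev embeddings $H^{r-1}\hookrightarrow L^p$, $H^{\alpha}\hookrightarrow L^q$ with $\frac1p+\frac1q=\frac12$ --- which is exactly the point where the condition $r\ge\frac32-\alpha$ enters, in complete parallel with the paper's computation. You should state the commutator estimate in that weaker form and prove it by this splitting rather than by appeal to the Sobolev embedding $H^r\hookrightarrow W^{1-\alpha,\infty}$.
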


 \begin{remark}
 It would be interesting to lower the value of $r_{\alpha},$ in particular down to the energy level $r=\frac{\alpha}{2}$, or to prove an ill-posedness result for $r<r_{\alpha}.$
 \end{remark}
 
 \begin{proof}
 We first derive  the suitable energy estimate, that is 
 
\begin{equation}\label{estim}
\frac{d}{dt} \|J^{r}u(\cdot,t)\|_{L^2}^2\leq C \|J^{r}u(\cdot,t)\|_{L^2}^3.
\end{equation}
 
All the following computations can be justified by smoothing the initial data. We set $r=s+\frac{\alpha}{2}.$ One readily obtains
\begin{displaymath}
\frac{1}{2}\frac{d}{dt}\int_{\R}\left(|J^su|^2+|J^{s+\frac{\alpha}{2}}u|^2\right)dx=-\int_{\R}J^s(uu_x)J^sudx.
\end{displaymath}
By the fractional Leibniz rule in \cite {KPV3} (see Lemma \ref{LeibnizRule}) one gets
\begin{equation}
J^s(uu_x)= uJ^su_x+u_xJ^su +R,
\end{equation}
where $R$ is estimated as
\begin{equation}
\|R\|_{L^2}\leq C\|J^{s-\epsilon}u\|_{L^p}\|J^{\epsilon} u_x\|_{L^q},
\end{equation}
for any $0<\epsilon<s$ and $\frac{1}{p}+\frac{1}{q}=\frac{1}{2}.$ Integrating by parts, one has thus
\begin{equation}\label{part}
\int_{\R}J^s(uu_x)J^sudx=\frac{1}{2}\int_{\R}u_x (J^s u)^2dx +\int_{\R}RJ^su dx
\end{equation}
 
Estimating the first integral on the RHS reduces to proving
\begin{equation}\label {est}
\int_{\R}|u_x (J^su)^2|dx\leq C\|u\|_{H^{s+\frac{\alpha}{2}}}^3.
\end{equation}
To do so we use the Sobolev imbedding
$$H^{s+\frac{\alpha}{2}-1}(\R)\hookrightarrow L^p(\R),\quad p\leq p_{\alpha,s}= \frac{2}{3-2s-\alpha}$$
and 
$$H^{\frac{\alpha}{2}}(\R)\hookrightarrow L^q(\R),\quad q\leq q_\alpha = \frac{2}{1-\alpha}.$$
Thus, provided 
$$\frac{1}{p_{\alpha,s}}+\frac{2}{q_\alpha}\leq 1,$$
that is 
$s\geq \frac{3}{2}-\frac{3\alpha}{2},$ one obtains by H\"{o}lder inequality

$$\int_{\R}|u_x (J^su)^2|dx\leq C\|u_x\|_{H^{s+\frac{\alpha}{2}-1}(\R)}\|J^su\|^2_{H^{\frac{\alpha}{2}}(\R)}\leq C\|u\|_{H^{s+\frac{\alpha}{2}}}^3 \, .$$

One now estimate the last integral on the right-hand side of \eqref{part}. We will prove actually that 
\begin{equation}\label{estR}
\Big|\int_{\R}RJ^su dx\Big|\leq \|u\|_{H^{s+\frac{\alpha}{2}}}^3.
\end{equation}
Noticing that $J^su\in H^{\frac{\alpha}{2}}(\R),$ we use the Sobolev imbedding $H^{\frac{\alpha}{2}}(\R)\hookrightarrow L^{\frac{2}{1-\alpha}}(\R)$ to obtain by H\"{o}lder's inequality
$$\Big|\int_{\R}RJ^su dx\Big|\leq C\|R\|_{L^{\frac{2}{1+\alpha}}}\|u\|_{H^r}.$$
By the fractional Leibniz rule, on has for any $0<\epsilon <s,$
\begin{displaymath}
\|R\|_{L^{\frac{2}{1+\alpha}}}\leq C\|J^{s-\epsilon}u\|_{L^{\frac{4}{1+\alpha}}}\|J^\epsilon u_x\|_{L^{\frac{4}{1+\alpha}}}.
\end{displaymath}
Observe that 
$$H^{s+\frac{\alpha}{2}-1-\epsilon}(\R) \hookrightarrow L^{\frac{4}{1+\alpha}}(\R)\quad \text{and}\quad H^{\epsilon+\frac{\alpha}{2}}(\R)\hookrightarrow L^{\frac{4}{1+\alpha}}(\R),$$
provided
$$\frac{1+\alpha}{4}\geq \frac{1}{2}-(s+\frac{\alpha}{2}-1-\epsilon)\quad\text{and}\quad \frac{1+\alpha}{4}\geq \frac{1}{2}-\epsilon-\frac{\alpha}{2}.$$
Choosing such an $\epsilon$ is possible provided
$$1-3\alpha\leq 4s+3\alpha-5,$$
that is when $s\geq \frac{3}{2}-\frac{3 \alpha}{2},$ or $r\geq \frac{3}{2}-\alpha.$ This achieves the proof of  \eqref{estR} and of the local $H^r$ estimate.
 
By classical (compactness) arguments, one gets the existence of a solution $u\in L^{\infty}(0,T^*, H^r(\R)),$  where $T^*=T^*(||u_0||_{H^r})>0.$
 
 We now prove the uniqueness of this local solution. We have of course to take profit of the smoothing effect of the operator $(I+D^{\alpha})^{-1}.$ Heuristically, in the Burgers case ($\alpha =0$), the uniqueness holds when $\|u_x\|_{L^{\infty}_x}$ is controlled. This is replaced here by a control on $\|D^{1-\alpha}u\|_{L^{\infty}_x}$ which is fine since $D^{1-\alpha}u\in H^{r-1+\alpha}(\R),$ and $r-1+\alpha >\frac{1}{2}$ by our choice of $r$.
 
 Let $u$ and $v$ be two solutions and $w=u-v$. One has
\begin{equation}\label{diff}
w_t+\partial_x(I+D^\alpha)^{-1}(w+\frac{1}{2}w(u+v))=0.
\end{equation}
We take the $L^2$  scalar product of \eqref{diff} with $D^sw$ and apply the Leibniz rule and integration by parts to get
\begin{equation}
\begin{split}
 \frac{1}{2}\int_{\R}(|D^sw|^2+|D^{s+\frac{\alpha}{2}}|^2)dx &\leq \int_\R |D^s(u+v)w_x D^sw)|dx\\
 & \quad+\frac{1}{2}\int_\R |(u+v)_x|\;|D^sw|^2dx
 +\int_\R |RD^sw|dx,
\end{split}
\end{equation}
where $\|R\|_{L^2}\leq \|D^{s-\epsilon}(u+v)\|_{L^p}\|D^{\epsilon} w\|_{L^q},$ \quad $\frac{1}{p}+\frac{1}{q}=\frac{1}{2}, \quad \text{for any} \quad 0<\epsilon<s.$ 
As above, the first two integrals on the RHS are majorized by $C||u+v||_{H^r} ||w||_{H^r}^2$ (we recall that $r=s+\frac{\alpha}{2}).$ Similarly, one obtains that 
$$\int_\R |RD^sw|dx\leq \|u+v\|_{H^r}\|w\|_{H^r}^2$$ and we conclude with Gronwall's lemma.
 
The strong continuity in time of the local solution and the continuity of the flow map can be established via the Bona-Smith trick (see for example the proof of Theorem \ref{maintheo}).
 
 \end{proof}
 
 \begin{remark}
 Proving that the existence time of the local solution of
 \begin{equation}
 \partial_tu+\partial_xu+\epsilon u\partial_xu+\epsilon D^{\alpha}\partial_tu=0,
\end{equation}
 is for $\alpha \in (0,1)$ strictly larger than $\mathcal O(\frac{1}{\epsilon})$ is an open question.
\end{remark}
 
\begin{remark}\label{DBU}
 It has been established in \cite{BS2} that the linearization of \eqref{fracBBM} at $0$ displays the dispersive blow-up property if and only if $\alpha\leq 1.$

 Issues concerning a possible \lq\lq nonlinear\rq\rq \, blow-up for \eqref{fracBBM} are not clear and totally open. The numerical simulations in \cite{KS} suggest that a blow up might occur, at least when $0<\alpha<\frac{1}{3}.$
 \end{remark}

\begin{merci}
The Authors were partially  supported by the Brazilian-French program in mathematics. 
They also would like to thank Luc Molinet for suggesting the method of proof of Theorem \ref{maintheo} and Benjamin Texier for suggesting the proof of Remark \ref{IP}.
\end{merci}

\bibliographystyle{amsplain}

\end{document}